\selectfont\symbol{60}\fontencoding{\encodingdefault}}
\newcommand{\nin}{\not\in}
\newcommand{\nobracket}{}
\newcommand{\textdots}{...}
\newcommand{\tmaffiliation}[1]{\\ #1}
\newcommand{\tmop}[1]{\ensuremath{\operatorname{#1}}}
\newcommand{\tmstrong}[1]{\textbf{#1}}
\newcommand{\tmtextit}[1]{\text{{\itshape{#1}}}}
\newtheorem{assumption}{Assumption}
\newcommand{\tmfloatcontents}{}
\newlength{\tmfloatwidth}
\newcommand{\tmfloat}[5]{
	\renewcommand{\tmfloatcontents}{#4}
	\setlength{\tmfloatwidth}{\widthof{\tmfloatcontents}+1in}
	\ifthenelse{\equal{#2}{small}}
	{\setlength{\tmfloatwidth}{0.45\linewidth}}
	{\setlength{\tmfloatwidth}{\linewidth}}
	\begin{minipage}[#1]{\tmfloatwidth}
		\begin{center}
			\tmfloatcontents
			\captionof{#3}{#5}
		\end{center}
\end{minipage}}
\numberwithin{equation}{section}
\numberwithin{figure}{section}
\theoremstyle{plain}
\newtheorem{thm}{Theorem}[section]
\theoremstyle{remark}
\theoremstyle{plain}
\theoremstyle{plain}
\newcounter{casectr}
\theoremstyle{remark}
\theoremstyle{remark}
\theoremstyle{definition}
\theoremstyle{plain}
\theoremstyle{plain}
\newtheorem{cor}[thm]{Corollary}
\theoremstyle{plain}
\theoremstyle{definition}
\theoremstyle{definition}
\newtheorem{defn}[thm]{Definition}
\theoremstyle{definition}
\newtheorem{example}[thm]{Example}
\theoremstyle{definition}
\theoremstyle{plain}
\theoremstyle{plain}
\newtheorem{lem}[thm]{Lemma}
\theoremstyle{remark}
\newtheorem{notation}[thm]{Notation}
\theoremstyle{definition}
\theoremstyle{plain}
\newtheorem{prop}[thm]{Proposition}
\theoremstyle{remark}
\newtheorem{rem}[thm]{Remark}
\theoremstyle{remark}
\theoremstyle{plain}
\theoremstyle{remark}
\newtheorem*{rem*}{Remark}
\newcommand{\lmd}{\lambda}
\newcommand{\R}{\mathbb{R}}
\newcommand{\eps}{\varepsilon}
\newcommand{\llangle}{\left\langle}
\newcommand{\rrangle}{\right\rangle}
\newcommand{\ph}{\varphi}
\newcommand{\cl}{\mathcal}
\newcommand{\bb}{\mathbb}
\newcommand{\lf}{\left}
\newcommand{\rh}{\right}
\newcommand{{\HH}}{\mathbb{H}}
\begin{document}
	
	\title[CONCENTRATION OF INVARIANT MEASURES FOR S.D.S.]{The concentration of zero-noise limits of invariant measures for stochastic dynamical systems}

	\author[Z. Dong, F. Gu, L. Li]{
		Zhao Dong$^{1,2}$, Fan Gu$^{1,2,*}$, Liang Li$^{3}$
	\address{$*$ Corresponding Author}
	\email {gufan@amss.ac.cn}
	\tmaffiliation{\tiny
		1 Academy of Mathematics and Systems Science, Chinese Academy
			of Sciences, Beijing, 100190, China\\
			2 School of Mathematical Sciences, University of Chinese Academy of
			Sciences, Beijing, 100049, China\\
			3 College of Mathematics and Physics, Beijing University of Chemical Technology, Beijing,
						100029, China}}
	\maketitle
	
		\begin{abstract}
		In this paper, we study concentration phenomena of zero-noise limits of invariant measures for stochastic differential equations defined on $\mathbb{R}^d$ with locally Lipschitz continuous coefficients and more than one ergodic state. Under some dissipative conditions, by using Lyapunov-like functions and large deviations methods, we estimate the invariant measures in neighborhoods of stable sets, neighborhoods of unstable sets and their complement, respectively. Our result illustrates that invariant measures concentrate on the intersection of stable sets where a cost functional $W(K_i)$ is minimized and the Birkhoff center of the corresponding deterministic systems as noise tends down to zero. Furthermore, we prove the large deviations principle of invariant measures. At the end of this paper, we provide some explicit examples and their numerical simulations.  
	\end{abstract}

  \smallskip

  \vskip 0.5cm

  \noindent  {\bf MSC:} 60B10; 60J60; 60F10; 37A50

  \vskip 0.3cm

	\noindent {\bf Keywords:} stochastic dynamical system, large deviation principle, invariant measure, zero-noise limit, concentration of measures.

	\tableofcontents

	\section{Induction}
	
	Let $b (x) : \mathbb{R}^d \rightarrow \mathbb{R}^d$ and
	$\sigma (x) : \mathbb{R}^d \rightarrow \mathbb{R}^{d \times d}$ be two locally
	Lipschitz continuous functions. We consider the ordinary differential equation
	\begin{equation}\label{1.1}
		\left\{ \begin{array}{ccl}
			d X_t & = & b (X_t) d t,\\
			X_0 & = & x_0,
		\end{array} \right. 
	\end{equation}
	where $x_0 \in \mathbb{R}^d$. We also consider the corresponding stochastic differential equation defined on $(\Omega, \{ \mathcal{F}_t \}_{t \geq 0},
	\mathcal{F}, P)$
	\begin{equation}\label{1.2}
		\left\{ \begin{array}{ccl}
			d X^{\varepsilon}_t &=& b (X^{\varepsilon}_t) d t + \varepsilon \sigma
			(X^{\varepsilon}_t) d W_t,\\
			X^{\varepsilon}_0 &=& x_0.
		\end{array} \right. 
	\end{equation}
	Here $x_0 \in \mathbb{R}^d$, $W_t$ is a standard $d$-dimension Brownian motion and the filtration $ \{ \mathcal{F}_t \}_{t \geq 0} $ satisfies the usual condition.
	
	For the strong solutions $X^\eps$ of equation  \eqref{1.2}, we use $P^{\varepsilon}_x := P \circ (X_{\cdot}^{\varepsilon})^{- 1}$
	to denote probability measures on the trajectory space. We denote invariant measures of \eqref{1.2} by $\mu_{\varepsilon}$ for any $\varepsilon\in(0, +\infty)$. If $\{\mu_\eps\}$ or its subsequence has a weak limit as $\varepsilon \downarrow 0$, then we denote the limit by $\mu$. 
	
	There have been a lot of literatures studying stochastic dynamical systems like \eqref{1.2}. Among them there are two important kinds of properties studied extensively. The first kind is to study exit problems, which includes the exit time and the exit location. Existing works about these problems can be devided into works for stable set networks and for heteroclinic networks, which can be found in \cite{peano} \cite{fw} \cite{yb11} \cite{rl} \cite{rt} and references therein. The second kind is to study properties of invariant measures $\{\mu_{\varepsilon}\} $. In this paper, we study concentration phenomena and the large deviations principle of $\{\mu_{\varepsilon}\} $ as $\varepsilon \downarrow 0$.
	
	There are many literatures devoted to these important problems. For example, in one dimension case, by using explicit
	solutions of stationary Fokker-Planck equations of \eqref{1.2}, \cite{1d} shows $\mu$ supports on points attaining the minimum of the energy function.
	By using Lyapunov-like functions and stationary Fokker-Planck equations, \cite{me} gets an estimate of the decay rate of
	$\{\mu_{\varepsilon}\}$ as $\varepsilon \downarrow 0$ for \eqref{1.2} defined on $\mathbb{R}^d$. \cite{chen} proves $\mu$ is also an invariant measure of \eqref{1.1}. 
	Thus, by using results in \cite{me} and \cite{mane}, for stochastic dynamical systems driven by L\'evy noise, \cite{chen} proves that $\mu$ supports on the Birkhoff center of \eqref{1.1} except repelling sets.
	\cite{fw} considers \eqref{1.2} defined on a compact space with continuous
	coefficients. By large deviations methods, \cite{fw} gets an estimate for $\{\mu_\eps\}$ in any sufficiently small neighborhood of equivalent sets. There are also some relative works on SPDEs such as \cite{li} \cite{sdpe17} \cite{spderd} \cite{sr} \cite{flan} \cite{ex1} \cite{ex2} \cite{ex3}.
	
	In general, stationary Fokker-Planck equations of \eqref{1.2} can be set on
	non-compact spaces with very general coefficients. Denote $\{\varrho_{\varepsilon}\}$ as solutions of stationary Fokker-Planck equations of \eqref{1.2}.
	Under some broad conditions, $\{\varrho_{\varepsilon}\}$ are
	density functions of $\{\mu_{\varepsilon}\}$. To study concentration phenomena of $\{\mu_{\varepsilon}\}$, one may need some asymptotic properties of $\{
	\varrho_{\varepsilon} \}$, which can be proved by using either explicit
	expressions of $\{\varrho_{\varepsilon}\}$ or decay properties of $\{
	\varrho_{\varepsilon} \}$, both rely heavily on uniformly decay properties
	of Lyapunov-like functions. Unfortunately, the
	Lyapunov-like functions can not have a uniformly decay property near the
	stationary sets of \eqref{1.1}. Therefore, for \eqref{1.1} with more than one ergodic state, this method can not go further to analyse measures of saddle points or stable sets under $\mu$. For the large deviations method provided in
	{\cite{fw}}, it relies on the compactness of space and the
	boundness of coefficients, which are necessary for the large deviations property and some essential estimates.
	
	In this paper, our main result shows that under conditions of Proposition \ref{prop:LDP} and
	Assumption \ref{mainassumption}, for \eqref{1.2} defined on $\mathbb{R}^d$ with locally Lipschitz continuous coefficients, $\{\mu_{\varepsilon}\}$ concentrates on the intersection of stable equivalent sets where a cost functional $ W(K_i)$ is minimized and the Birkhoff center of \eqref{1.1} as $\varepsilon \downarrow 0$. Furthermore, we show the large deviations principle of $\{\mu_{\varepsilon}\}$ and give the action function of $\{\mu_{\varepsilon}\}$. To achieve these goals, we mainly do following three aspects of work. First, by Lyapunov-like functions, we get the uniform large deviation property for $\{P_x^{\varepsilon}\}$ with respect to the initial point in any compact set. Second, we devide $\mathbb{R}^d$ into neighborhoods of stable equivalent sets, neighborhoods of unstable equivalent sets and the complement of neighborhoods of stable equivalent sets and unstable equivalent sets. By using the large deviations method and dissipative properties of \eqref{1.2} in a neighborhood of the stable equivalent sets and a domain outside of a compact set, we establish an estimate of $\{\mu_{\varepsilon}\}$ in the above three kinds of sets. Finally, we prove $\{\mu_{\varepsilon}\}$ satisfies the large deviations principle and give its action function, which shows the convergence rate of $\{\mu_\eps\}$.
	
	Our results show differences between the support of $ \mu$ and $\omega$-limit sets of \eqref{1.1}. These imply that long time behaviors of \eqref{1.1} and \eqref{1.2} are essentially different. Thus, under inevitable perturbations in real physical phenomena, in a long time observation, we will only be able to see the states that $\mu$ supports on.
	
	This paper is organized as follows: In section 2, for \eqref{1.2}
	with locally Lipschitz continuous coefficients in $\mathbb{R}^d$, we prove that under conditions of Proposition \ref{prop:LDP}, strong solutions of \eqref{1.2} satisfy the Freidlin-Wentzell type
	large deviations principle. In this section, the explicit action functional is also given. In section 3, we discuss connections between stabilities in the sense of quasi-potential and in the sense of deterministic dynamical systems. In section 4, under conditions of Proposition \ref{prop:LDP} and Assumption \ref{mainassumption}, we give an expression of $\mu_{\varepsilon}$ and an estimate of $\mu_{\varepsilon}$ in different domains. In section 5, we give some large deviations properties of $\{\mu_{\varepsilon}\}$. In section 6, we provide some examples with numerical simulations. We analyse these examples from both theoretical and numerical perspectives.
	
	\medskip
	
	\section{Large deviations of $ \{P_x^\varepsilon\} $}
	
	Let us first introduce some notations.
	\begin{notation}
		For $0 \leq T_1 \leq T_2 < + \infty$ and $x\in \R^d$, we denote
		$$C_x ([T_1, T_2] ; \mathbb{R}^d) = \lf\{\ph\in C([T_1, T_2] ; \mathbb{R}^d):\ph_{T_1}=x\rh\}.$$
		Similarly, for any $D \subset \mathbb{R}^d $, we denote $C_x ([T_1, T_2] ; D)$ as the space of continuous functions in $D$ from $T_1$ to $T_2$ beginning at $x$.
		
		Let $ C_x ([T_1, T_2] ;
		\mathbb{R}^d)$ be the space endowed with the metric
		\[\rho_{T_1 T_2} (\varphi, \psi) = \max_{t \in [T_1, T_2]} | \varphi_t -\psi_t |,\]
		Where $\varphi, \psi $ belong to $ C_x ([T_1, T_2] ;
		\mathbb{R}^d)$.
		
		For the sake of simplicity, we denote
		\[AC_x ([T_1, T_2]; \mathbb{R}^d) =\lf\{\varphi \in C_x ([T_1, T_2] ; \mathbb{R}^d): \ph \textrm{ is absolutely continuous}\rh\},\]
		and the Cameron Martin space
		\[H_0 ([T_1, T_2] ; \mathbb{R}^d) = \left\{\varphi \in A C_0 ([T_1, T_2]; \mathbb{R}^d):
		\int^{T_2}_{T_1} | \dot{\varphi}_s |^2 d s < + \infty \right\}\]
		with norm
		$$\| \varphi  \|_1 := \lf(\int^{T_2}_{T_1} | \dot{\varphi}_s |^2 d s\rh)^\frac{1}{2}.$$
		We also  denote
		$$H_x ([T_1, T_2] ; \mathbb{R}^d) =\lf\{\varphi:\varphi -x \in H_0 ([T_1, T_2] ; \mathbb{R}^d)\rh\}.$$

		We will use $X(x)$ to denote the solution of equation \eqref{1.1} with initial point $x\in\R^d$ and $X^\eps(x)$ to denote the strong solution of equation \eqref{1.2} with initial point $x\in\R^d$. And we denote the first entrance time for $O\subset \R^d$ as:
		\[ \tau_{O,x}=\inf\{t\ge0:X_t(x)\in O\}, \quad\tau_{O,x}^\eps=\inf\{t\ge0:X_t^\eps(x)\in O\}.\]
		
		We use $B_a(M):=\{x\in\R^d:|x-a|<M\}$ to represent the ball of radius $M>0$ and centered at $a\in\mathbb{R}^d$.
		
	\end{notation}
	
	The Freidlin-Wentzell large deviations principle in {\cite{fw}} can be used to deal
	with problems about limiting properties of $\{ P^{\varepsilon}_x \}$ as
	$\varepsilon \downarrow 0$ for bounded and uniformly continuous coefficients		$b(x)$ and $\sigma (x)$. Under those conditions, the LDP is uniformly with
	respect to the initial point $x \in \mathbb{R}^d$. {\cite{gfw}} and	references therein show that for $b (x)$ and $\sigma (x)$ satisfying local Lipschitz and linear growth conditions, $\{ P^{\varepsilon}_x \}$ also has
	Freidlin-Wentzell type LDP uniformly with respect to the initial point in any
	compact set belong to $\mathbb{R}^d$.
	
	In this section, we extend the LDP
	results in {\cite{gfw}} by Lyapunov-like function instead
	of the linear growth condition. The linear growth condition in {\cite{gfw}} is
	used to ensure that the uniqueness of solutions of \eqref{1.2} and \eqref{aum1} and some bounded estimates of them. This change of condition allows our model to
	be applied to some meaningful examples such as stochastic Duffing equation and Bernoulli equation.
	
	To investigate the LDP, let us introduce the action functional and the level set.
	Let $\sigma$ be an invertible matrix in $\mathbb{R}^{d\times d}$. 
	For $0 \leq T_1 \leq T_2 $ and $\varphi \in C_x ([T_1, T_2] ; \mathbb{R}^d)$, we set
	\begin{equation}\label{af1}
		S_{T_1 T_2} (\varphi) = \left\{ \begin{array}{ll}
			\frac{1}{2} \int^{T_2}_{T_1} | (\dot{\varphi}_t - b (\varphi_t))^T(\sigma\sigma^T)^{-1} (\varphi_t)
			(\dot{\varphi}_t - b (\varphi_t)) |^2 d t, &\varphi \in A C_x ([T_1,
			T_2] ; \mathbb{R}^d),\\ \\
			+ \infty,& \tmop{otherwise} .
		\end{array} \right.
	\end{equation}
	and
	\[ \Phi_x (s) = \{ \varphi \in C_x ([T_1, T_2] ; \mathbb{R}^d): S_{T_1 T_2} (\varphi)
	\leq s \} . \]
	\begin{prop}\label{prop:LDP}
		Let $T$ and $s_0$ be positive constants and $F \subset \mathbb{R}^d$ be a fixed compact set. Suppose that following conditions hold:
		\begin{trivlist}
			\item[i).] There exists a function $U \in C^1(\R^d;\R^+)$ and three positive constants $\zeta,
			\kappa, M $ such that following two inequalities hold:
			\begin{equation}\label{ash2}
				\nabla U (x) \cdot b (x) \leq - \zeta  | \nabla U (x) |^2, \quad 
				x \in \mathbb{R}^d,
			\end{equation}
			and
			\begin{equation}\label{ash3}
				\nabla U(x) \cdot \frac{x}{|x|} \geq \kappa,\quad
				\forall x \in B_0^c(M).
			\end{equation}
			\item[ii).] 
			$\sigma$ is bounded by $\bar{\lambda}>0$ and there exists a positive constant $ \underline{\lambda}$ such that the eigenvalues of $\sigma\sigma^T$ is larger than $ \underline{\lambda}$. 
			\item[iii).] Let $\tilde{U}  \in C^2 (\mathbb{R}^d;\mathbb{R}^+)$ be a function satisfying $\lim_{| x | \uparrow + \infty} \tilde{U} (x) = + \infty $. There exist two constants $\varepsilon_1, \chi \in (0, +
			\infty)$ such that for any $\varepsilon \in (0, \varepsilon_1)$ and $x \in B_0^c(M) $, we have
			\begin{equation}\label{ash4}
				\frac{\varepsilon^2}{2} \sum_{i, j} a_{i j} (x) \frac{\partial^2}{\partial
					x_i \partial x_j} \tilde{U} (x) + \nabla \tilde{U} (x) \cdot b(x)  < -
				\chi,
			\end{equation}
			where $(a_{i j} )_{d \times d}=\sigma \sigma^T $.
		\end{trivlist}
		Then $\{ P^{\varepsilon}_x \}$ satisfies the large deviations principle on
		$C_x ([0, T] ; \mathbb{R}^d)$ with good rate function $
		S_{0 T} (\cdot)$, uniformly with respect to $x
		\in F$ and $s \in [0, s_0)$ as $\varepsilon \downarrow 0$. Precisely speaking,
		for any fixed compact set $F\subset \R^d$ and any $s_0,\delta,
		\gamma > 0$, there exists an $\varepsilon_0 = \varepsilon_0 (\gamma, \delta,
		T, F, s_0) > 0$ such that following two estimates are true:
		\begin{equation}\label{ldpl}
			P^{\varepsilon}_x (\rho_{0 T} (X_{\cdot}^{\varepsilon}, \varphi) <
			\delta) \geq \exp \left( - \varepsilon^{- 2} (S_{0 T} (\varphi) +
			\gamma) \right),
		\end{equation}
		for any $x\in F$, $s\in [0,s_0)$, $\varphi \in \Phi_x (s)$ and $\varepsilon \in (0, \varepsilon_0)$, and
		
		\begin{equation}\label{ldpu}
			P^{\varepsilon}_x (\rho_{0 T} (X_{\cdot}^{\varepsilon}, \Phi_x (s)) \geq
			\delta) \leq \exp (- \varepsilon^{- 2} (s - \gamma)), 
		\end{equation}
		for any $x\in F$, $s\in [0,s_0)$ and $\varepsilon \in (0, \varepsilon_0)$.
		
	\end{prop}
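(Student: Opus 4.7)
The plan is to reduce the statement to the Freidlin--Wentzell type LDP under globally Lipschitz and linearly growing coefficients, established in {\cite{gfw}}, by a truncation-plus-exponential-containment argument. For each $R > M$ large enough that $F \subset B_0(R/2)$, I would introduce modifications $b_R$ and $\sigma_R$ that agree with $b$ and $\sigma$ on $B_0(R)$ and are globally Lipschitz with linear growth; write $X^{\varepsilon,R}$ for the associated strong solution and $S_{0T}^R$ for its action functional. By {\cite{gfw}}, the family $\{P_x^{\varepsilon,R}\}$ satisfies the uniform LDP on compact sets with rate $S_{0T}^R$, and since $b_R=b$ and $\sigma_R=\sigma$ on $B_0(R)$, the rate functionals $S_{0T}^R$ and $S_{0T}$ coincide on paths with values in $B_0(R)$.

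The technical heart of the proof is a Freidlin--Wentzell-type containment estimate: there exist $\varepsilon_R>0$ and a constant $c(R)$ with $c(R)\to +\infty$ as $R\to\infty$ such that
\[
\sup_{x\in F} P_x^\varepsilon\!\left(\tau^\varepsilon_{B_0^c(R),x} \leq T\right) \leq \exp\!\left(-\varepsilon^{-2} c(R)\right), \qquad \varepsilon \in (0,\varepsilon_R).
\]
I would obtain this by applying It\^o's formula to $\exp\!\left(\lambda \tilde U(X^\varepsilon_t)/\varepsilon^2\right)$, using \eqref{ash4} to make the drift of this exponential negative outside $B_0(M)$ once $\lambda$ is chosen small enough that the quadratic-variation correction $\tfrac{1}{2}\lambda^2 |\sigma^T\nabla \tilde U|^2$ is dominated by $\lambda\chi$, and then invoking optional stopping together with Markov's inequality to bound the exit probability by $\exp\!\left(\lambda\tilde U(x)/\varepsilon^2 - \lambda\inf_{|y|=R}\tilde U(y)/\varepsilon^2\right)$. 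Condition (i) and the non-degeneracy (ii) are used separately to guarantee uniqueness of \eqref{1.2} and a two-sided control on $(\sigma\sigma^T)^{-1}$ needed when identifying $S_{0T}^R$ with $S_{0T}$.

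With containment in hand, the upper bound \eqref{ldpu} follows by decomposing the event $\{\rho_{0T}(X^\varepsilon_\cdot,\Phi_x(s))\ge\delta\}$ into $\{\tau^\varepsilon_{B_0^c(R),x}\le T\}$, controlled by the exponential bound above for $R$ chosen so that $c(R)>s_0$, and its complement, on which $X^\varepsilon_\cdot \equiv X^{\varepsilon,R}_\cdot$ and the truncated upper bound from {\cite{gfw}} applies. For the lower bound \eqref{ldpl}, I would use compactness of the sublevel set $\bigcup_{x\in F}\Phi_x(s_0)$ in $C([0,T];\R^d)$ to fix $R'$ so that every $\varphi\in\Phi_x(s)$ with $x\in F$ and $s\le s_0$ is contained in $B_0(R'/2)$; then for $R=R'+\delta$, every path in the $\delta$-tube around such $\varphi$ stays in $B_0(R)$, so the known truncated lower bound for $X^{\varepsilon,R}$ transfers to $X^\varepsilon$ on $\{\tau^\varepsilon_{B_0^c(R),x}>T\}$ and the residual is absorbed by the containment estimate.

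The main obstacle I anticipate is making the exponential supermartingale argument quantitative enough to produce $c(R)\to\infty$: the tension is between the $O(\varepsilon^{-2})$ linear drift $-\lambda\chi/\varepsilon^2$ supplied by \eqref{ash4} and the $O(\varepsilon^{-2})$ quadratic correction $\tfrac{1}{2}\lambda^2\bar\lambda^2 |\nabla\tilde U|^2 /\varepsilon^2$. If $|\nabla\tilde U|$ grows with $|x|$ one must allow $\lambda=\lambda(R)$ to shrink with $R$ and verify that $\lambda(R)\inf_{|y|=R}\tilde U(y)$ still diverges; provided $\tilde U$ is chosen so that $|\nabla\tilde U|^2$ is dominated by $\tilde U$ up to a subleading factor, as in the stochastic Duffing and Bernoulli examples advertised in the introduction, this balance closes and the proposition follows.
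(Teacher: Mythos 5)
Your strategy — truncate to globally Lipschitz, linear-growth coefficients $b_R,\sigma_R$ on $B_0(R)$, invoke the uniform LDP of \cite{gfw} for the truncated diffusion, and then transfer back to $X^\varepsilon$ by an exponential containment estimate — is genuinely different from what the paper does, and it has a real gap in the upper bound. The containment estimate you need, $\sup_{x\in F}P^\varepsilon_x(\tau^\varepsilon_{B_0^c(R),x}\le T)\le\exp(-\varepsilon^{-2}c(R))$ with $c(R)\to\infty$, is \emph{not} a consequence of condition iii). That condition gives $\frac{\varepsilon^2}{2}\sum a_{ij}\partial_{ij}\tilde U+\nabla\tilde U\cdot b<-\chi$ outside $B_0(M)$, which (as the paper uses via Khasminskii's Theorems 3.5 and 3.9) yields non-explosion and positive recurrence — i.e.\ a \emph{linear} bound of the type $E_x\tau^\varepsilon_{B_0(M)}\lesssim\tilde U(x)/\chi$ — but not an exponential exit bound. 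Your own last paragraph identifies precisely where the supermartingale argument for $\exp(\lambda\tilde U(X^\varepsilon_t)/\varepsilon^2)$ breaks: the quadratic It\^o correction $\tfrac12\lambda^2|\sigma^T\nabla\tilde U|^2$ must be dominated by $\lambda\chi$, which forces $\lambda(R)\lesssim|\nabla\tilde U|^{-2}$ near $\partial B_0(R)$. Without a hypothesis controlling $|\nabla\tilde U|^2/\tilde U$ at infinity (which Proposition \ref{prop:LDP} does \emph{not} impose, and cannot be deduced from i)--iii) in general), you cannot guarantee that $\lambda(R)\inf_{|y|=R}\tilde U(y)$ diverges. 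So the step ``choose $R$ with $c(R)>s_0$'' is not justified as stated, and the upper bound does not close.

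The paper's proof (via Lemma \ref{lem:ash}) sidesteps this entirely. Parts a), b) establish, from i)--ii) alone, an a priori bound for the skeleton equation \eqref{aum1}: any controlled path $\varphi=S_x(h)$ with $\|h\|_1\le\alpha$ and $x\in F$ stays in a fixed compact set $\overline{B_0(C)}$ (estimate \eqref{ash8}). Part c), the quasi-continuity estimate
\[
P(\rho_{0T}(X^\varepsilon_\cdot,\varphi_\cdot)>\rho,\ \rho_{0T}(\varepsilon W_\cdot,h_\cdot)\le\beta)\le\exp(-\varepsilon^{-2}R),
\]
is then proved by cutting off $b,\sigma$ outside $B_0(C+2\rho)$: because $\varphi$ is trapped in $B_0(C)$, the event $\{\rho_{0T}(X^\varepsilon,\varphi)>\rho\}$ is already witnessed at a first time the process reaches distance $\rho+\eta/2$ from $\varphi$, which happens \emph{inside} $B_0(C+2\rho)$, where $X^\varepsilon$ is indistinguishable from the truncated $\tilde X^\varepsilon$. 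No exponential exit bound is ever required; condition iii) is used only for non-explosion and, later, for recurrence of the Markov chain construction. Your lower-bound argument (bounding the sublevel sets $\bigcup_{x\in F}\Phi_x(s_0)$ in a fixed compact $B_0(R')$, then choosing $R=R'+\delta$) is essentially the same localization and is fine; if you apply the analogous observation on the upper-bound side — that the quasi-continuity event self-localizes once the skeleton is bounded — you dispense with the containment estimate and the proof closes under the hypotheses actually assumed.
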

	
	To prove Proposition \ref{prop:LDP}, we need the
	following Lemma \ref{lem:ash}. Its idea comes from a similar Lemma in {\cite{gfw}}.

	\begin{lem}\label{lem:ash}
		Suppose that  i), ii) and iii) in Proposition \ref{prop:LDP} hold. 
		Then:
		\begin{trivlist}
			\item[a).]
			\eqref{1.2} has a unique strong solution $X^\eps$ for any $\eps\in(0,\eps_1)$. Moreover, for any $T>0$, $h \in H ([0, T] ; \mathbb{R}^d)$ and $x \in \mathbb{R}^d$, the integral equation
			\begin{equation}\label{aum1}
				\varphi_t = x + \int^t_0 b (\varphi_s) d s + \int^t_0 \sigma (\varphi_s)
				\dot{h}_s d s,\qquad t\in[0,T],
			\end{equation}
			has a unique solution $\varphi \in C_x ([0, T] ; \mathbb{R}^d)$.

			\item[b).] For any $\alpha > 0$, the solution map of equation \eqref{aum1}:
			\begin{eqnarray*}
				S_x (\cdot) : H ([0, T] ; \mathbb{R}^d)&\longrightarrow& C_x ([0, T] ; \mathbb{R}^d)\\
				h&\longmapsto& \ph
			\end{eqnarray*}
			is continuous on $K_{\alpha} := \{
			\| h \|_1 \leq \alpha \}$.
			
			\item[c).] (quasi-continuous)
			For any positive constants $\rho, \alpha, c, R $, there exists an $\varepsilon_0>0$ and a $\beta > 0$ such that 
			\begin{equation*}
				P (\rho_{0 T} (X_{\cdot}^{\varepsilon}, \varphi_{\cdot}) > \rho, \quad \rho_{0
					T} (\varepsilon W_{\cdot}, h_{\cdot}) \leq \beta) \leq \exp (-
				\varepsilon^{- 2} R)
			\end{equation*}
			holds for any $\varepsilon \in (0,\varepsilon_0)$ and $h$, $x$ satisfying $\| h \|_1 \leq \alpha$ and $| x | \leq c$.
			Here $\varphi$ is $S_x (h)$.
		\end{trivlist}
	\end{lem}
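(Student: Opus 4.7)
For part a), local existence and uniqueness of the strong solution $X^{\varepsilon}$ and of the skeleton solution $\varphi$ follow from the local Lipschitz property of $b$ and $\sigma$ by standard Picard iteration. To rule out explosion of $X^{\varepsilon}$, I set $\tau_N=\inf\{t:|X^{\varepsilon}_t|\ge N\}$ and apply It\^o's formula to $\tilde U(X^{\varepsilon}_{t\wedge\tau_N})$. Using condition \eqref{ash4} on $B_0^c(M)$ and the boundedness of the infinitesimal generator on $\overline{B_0(M)}$, one obtains $\mathbb{E}[\tilde U(X^{\varepsilon}_{t\wedge\tau_N})]\le\tilde U(x)+C_1 t$, and the coercivity of $\tilde U$ forces $\tau_N\uparrow+\infty$ almost surely. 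For the skeleton equation \eqref{aum1}, differentiate $U(\varphi_t)$ and combine \eqref{ash2} with Young's inequality to obtain
\[\frac{d}{dt}U(\varphi_t)\le -\frac{\zeta}{2}|\nabla U(\varphi_t)|^2+\frac{\bar{\lambda}^2}{2\zeta}|\dot h_t|^2,\]
so $U(\varphi_t)\le U(x)+\frac{\bar{\lambda}^2}{2\zeta}\|h\|_1^2$. Since \eqref{ash3} integrated radially gives $U(x)\ge\kappa(|x|-M)+\min_{|y|=M}U(y)$ outside $B_0(M)$, i.e.\ $U$ is coercive, this bound confines $\varphi$ to a fixed compact set depending only on $|x|$ and $\|h\|_1$, which rules out blow-up.

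For part b), the a-priori bound from part a) confines every $\varphi_i=S_x(h_i)$ with $h_i\in K_\alpha$ to a common compact set $K\subset\mathbb{R}^d$. On $K$, both $b$ and $\sigma$ are globally Lipschitz with some constant $L_K$, and a routine manipulation yields
\[|\varphi_{1,t}-\varphi_{2,t}|\le L_K\int_0^t|\varphi_{1,s}-\varphi_{2,s}|(1+|\dot h_{2,s}|)\,ds+\bar\lambda\sqrt{T}\,\|h_1-h_2\|_1.\]
Since $\int_0^T(1+|\dot h_{2,s}|)\,ds\le T+\sqrt{T}\alpha$, Gronwall's inequality produces $\rho_{0T}(\varphi_1,\varphi_2)\le C(\alpha,T,K)\|h_1-h_2\|_1$, which gives the claimed continuity on $K_\alpha$.

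Part c) is the main work, and I plan it in two stages. Stage one is an exponential confinement estimate: for $N>M$ I aim for
\[P\!\left(\sup_{t\le T}|X^{\varepsilon}_t|>N\right)\le\exp(-\varepsilon^{-2}R_0(N)),\]
with $R_0(N)\to+\infty$ as $N\to+\infty$, uniformly over $|x|\le c$. The idea is to apply It\^o's formula to $\exp(\eta\varepsilon^{-2}\tilde U(X^{\varepsilon}_t))$ for $\eta>0$ sufficiently small that the It\^o correction $\frac{\eta^2}{2}|\sigma^T\nabla\tilde U|^2$ is absorbed by the strict negative drift in \eqref{ash4}; then the resulting process is a supermartingale outside $B_0(M)$, and a stopping-time plus Doob argument yields the exponential bound. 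Stage two compares $X^{\varepsilon}$ with $\varphi=S_x(h)$ on the event $\{\sup_{t\le T}|X^{\varepsilon}_t|\le N\}\cap\{\rho_{0T}(\varepsilon W,h)\le\beta\}$, where $b$ and $\sigma$ are globally $L_N$-Lipschitz. Splitting
\[X^{\varepsilon}_t-\varphi_t=\int_0^t[b(X^{\varepsilon}_s)-b(\varphi_s)]\,ds+\int_0^t\sigma(X^{\varepsilon}_s)\,d(\varepsilon W_s-h_s)+\int_0^t[\sigma(X^{\varepsilon}_s)-\sigma(\varphi_s)]\dot h_s\,ds,\]
and performing It\^o integration by parts on the middle integral to trade $d(\varepsilon W-h)$ for the uniformly small factor $\varepsilon W-h$ (at the cost of an $O(\varepsilon^2)$ quadratic-variation correction) gives a pathwise bound
\[|X^{\varepsilon}_t-\varphi_t|\le C_N(\beta+\varepsilon)+C_N\int_0^t|X^{\varepsilon}_s-\varphi_s|(1+|\dot h_s|)\,ds,\]
from which Gronwall yields $\rho_{0T}(X^{\varepsilon},\varphi)\le C_{N,\alpha}(\beta+\varepsilon)<\rho$ once $\beta$ and $\varepsilon$ are small. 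One first chooses $N=N(R)$ so that $R_0(N)>R$, then $\beta=\beta(\rho,N)$, then $\varepsilon_0=\varepsilon_0(\rho,N,\beta)$. The principal obstacle is the exponential-martingale calibration of stage one: the required rate $\varepsilon^{-2}$ is exactly the scale at which the It\^o correction competes with the Lyapunov dissipation, so $\eta$, $N$, and the constants in \eqref{ash4} must be balanced carefully to keep the exponential a true supermartingale.
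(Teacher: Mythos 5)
Your treatments of parts a) and b) are correct and broadly parallel to the paper's: for a) you rule out explosion by applying It\^o to $\tilde U$ and use \eqref{ash4}, which is what the paper does via Theorem 3.5 of \cite{km}; for the skeleton you differentiate $U(\varphi_t)$, use \eqref{ash2} with Young's inequality, and coercivity from \eqref{ash3}, which is a direct form of the contradiction argument in the paper. Your Gronwall computation for b) is the content of the Lemma 2.5 of \cite{gfw} that the paper cites. These are fine.

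Part c) has a genuine gap, and it sits exactly where you flag ``the principal obstacle.'' Your stage one asks for $P\bigl(\sup_{t\le T}|X^\varepsilon_t|>N\bigr)\le\exp(-\varepsilon^{-2}R_0(N))$ via an exponential supermartingale built from $\tilde U$. When you apply It\^o to $\exp(\eta\varepsilon^{-2}\tilde U(X^\varepsilon_t))$, the drift, after factoring out $\eta\varepsilon^{-2}e^{\eta\varepsilon^{-2}\tilde U}$, becomes
\[
\nabla\tilde U\cdot b+\frac{\varepsilon^2}{2}\sum_{i,j}a_{ij}\partial_{ij}\tilde U+\frac{\eta}{2}\lvert\sigma^T\nabla\tilde U\rvert^2 .
\]
Condition \eqref{ash4} makes the first two terms $<-\chi$, a \emph{constant}. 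But the correction $\tfrac{\eta}{2}\lvert\sigma^T\nabla\tilde U\rvert^2\le\tfrac{\eta\bar\lambda^2}{2}\lvert\nabla\tilde U\rvert^2$ is not bounded by anything in the hypotheses; the Proposition only assumes $\tilde U\in C^2$, coercive, satisfying the generator inequality, with no control on $\lvert\nabla\tilde U\rvert$. (For the natural candidate $\tilde U(x)\sim|x|^2$ the correction grows like $|x|^2$.) You also cannot fall back on $U$ from condition i), because $U$ is only $C^1$. So the supermartingale does not close for any fixed $\eta>0$, and stage one is unproven under the stated assumptions. A secondary issue in stage two: the It\^o integration by parts that trades $d(\varepsilon W-h)$ for the sup of $\varepsilon W-h$ needs $\sigma\in C^1$, which is not assumed either; this one can be patched by martingale/Gaussian-tail bounds, but stage one cannot.

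The paper avoids all of this with a localization trick that you should adopt. Since part a) confines $\varphi=S_x(h)$ to a ball $\{|y|\le C\}$ depending only on $|x|\le c$, $\|h\|_1\le\alpha$ and $T$, one replaces $b,\sigma$ by bounded globally Lipschitz truncations $\tilde b,\tilde\sigma$ that agree with $b,\sigma$ on $\{|y|\le C+2\rho\}$. Then $\tilde\varphi=\varphi$ identically, and on the event $\{\rho_{0T}(X^\varepsilon,\varphi)>\rho\}$ the first time $t_1$ at which $|X^\varepsilon_{t_1}-\varphi_{t_1}|$ reaches $\rho+\eta/2$ automatically satisfies $|X^\varepsilon_s|\le C+2\rho$ for $s\le t_1$, so $X^\varepsilon=\tilde X^\varepsilon$ up to $t_1$. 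Hence the bad event is identical for the original and the truncated system, and the truncated system is covered by the bounded Lipschitz estimate (Theorem 2.9 in \cite{gfw}). This completely sidesteps any global tail estimate for $X^\varepsilon$; there is no need for, and under the stated hypotheses no access to, your stage one.
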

	
	\begin{proof}
		We prove the Lemma part by part.
		\begin{trivlist}
			\item[\bf a):]
			Let us set $S^{\varepsilon}_n = \inf \{ t : | X^{\varepsilon}_t | \geq
			n \}, \forall \ n \in \mathbb{N}^+, \varepsilon \in (0, \varepsilon_1)$.
			Because of the locally Lipschitz continuity of $b$ and $\sigma$, Theorem 5.2.5 in {\cite{gtm113}} shows that 
			\[ X_{t \wedge S^{\varepsilon}_n}^{\varepsilon} = x_0 + \int^t_0 b (X_{s
				\wedge S^{\varepsilon}_n}^{\varepsilon}) d s + \varepsilon \int^t_0
			\sigma (X_{s \wedge S^{\varepsilon}_n}^{\varepsilon}) d W_s \]
			has a unique strong solution, for any $ n \in \mathbb{N}^+$ and $\varepsilon \in (0, \varepsilon_1)$. Thus, if
			\begin{equation}\label{ash9}
				P \left(\lim_{n \uparrow + \infty} S^{\varepsilon}_n < t\right) = 0, \qquad\forall \ t \in (0, + \infty),
			\end{equation}
			we can prove that \eqref{1.2} has a unique strong solution for any $\varepsilon \in (0,\varepsilon_1)$. According to condition iii) of Proposition \ref{prop:LDP}, we can prove \eqref{ash9} by taking $\tilde{U} (x) + C$ as $V (t, x)$ in Theorem 3.5 of {\cite{km}}, where $C$ is a sufficiently large constant.
			
			Because of the locally Lipschitz continuity of the
			coefficients, by the Picard iteration we know that this integral
			equation \eqref{aum1} has a unique continuous local solution.
			
			Now it is sufficient to show that $\ph$ is bounded on $[0,T]$. We prove it by contradiction. If not,
			for the smallest critical point $t_0 \in [0, T]$, i.e. $t_0=\sup\{t \in [0, T]:|\ph(t)|<\infty\}$, there
			must exist a sequence of $\{ t_n \}$ such that $\{t_n\}$ strictly increases and converges to $t_0$ as $n\rightarrow\infty$, so we have $\lim_{n \rightarrow + \infty} |
			\varphi_{t_n} | = + \infty$. We set $U^{\ast} = \max_{| x | = M} U (x)$,
			$U_{\ast} = \min_{| x | = M} U (x)$, $U^{\ast 0} = \max_{| x | = | \varphi_0
				|} U (x)$ and $U_{\ast 0} = \min_{| x | = | \varphi_0 |} U (x)$. We choose a particular $n\in\bb{N}$, such that $| \varphi_{t_n} | > (M \vee | \varphi_0 |) + \frac{
				{\bar{\lmd}}^2}{2 \zeta \kappa} | h |_1 +\left(  \frac{U^{\ast} - U_{\ast}}{\kappa}
			\vee \frac{U^{\ast 0} - U_{\ast 0}}{\kappa}\right)  + 1$. We set $M_n = \max_{t \in [0,
				t_n]} | \varphi_t | < + \infty$. Since $U\in C^1(\R^d;\R^+)$, $U$ is bounded and
			Lipschitz continuous on $\overline{B_0(M_n)}$. Because $\varphi$ is
			absolutely continuous on $[0, t_n]$, $U (\varphi)$ is
			absolutely continuous on $[0, t_n]$.
			
			We denote
			$\tilde{t} = \max_{t \in [0, t_n]} \{ t : | \varphi_t | \leq M \}$, when $| \varphi_0 | \leq M$.
			From \eqref{ash2} in i) and the boundness of $\sigma$ in ii), we have 
			\begin{eqnarray*}
				U (\varphi_{t_n}) - U (\varphi_{\tilde{t}}) 
				& = & \int^{t_n}_{\tilde{t}}
				\frac{d U (\varphi_s)}{d s} d s\label{eq:Uphtn-Uphtt}\\
				&=&\int^{t_n}_{\tilde{t}}\llangle \nabla U(\ph(s)),  b(\ph(s))+\sigma(\ph(s))\dot h\rrangle_{\R^d} d s   \\
				&\leq&  \int^{t_n}_{\tilde{t}} - \zeta  | \nabla U (\varphi_s) |^2
				+ \bar{\lmd} | \nabla U (\varphi_s) | | \dot{h}_s | d s\nonumber\\
				& \leq & \int^{t_n}_{\tilde{t}} - \frac{\zeta}{2}  | \nabla U
				(\varphi_s) |^2 + \frac{ {\bar{\lmd}}^2}{2 \zeta} | \dot{h}_s |^2 d s\nonumber\\
				&\leq&  \frac{ {\bar{\lmd}}^2}{2 \zeta} \| h \|_1^2,\nonumber
			\end{eqnarray*}
			which implies that
			\begin{equation}\label{ash5}
				U (\varphi_{t_n}) \leq \frac{ {\bar{\lmd}}^2}{2 \zeta} \| h \|_1^2 + U (\varphi_{\tilde{t}}) 
				\leq \frac{ {\bar{\lmd}}^2}{2 \zeta} \| h \|_1^2 + U^{\ast} .
			\end{equation}
			On the other hand, by \eqref{ash3}, for any $y \in \{ x : | x | > M \}$, there exists a $\xi\in \R^d$ such that $\xi$ is a convex combination of $ y $ and $ \frac{M}{| y |} y $, and we have
			\begin{equation}\label{ash6}
				U (y) - U \left( \frac{y}{| y |} M \right) = \nabla U (\xi) \cdot \left( y
				- \frac{y}{| y |} M \right)  \geq \kappa \left| y - \frac{y}{| y |} M
				\right|=\kappa(|y|-M).
			\end{equation}
			Because of \eqref{ash6} and $|\varphi_{t_n} |>M$, we have
			\begin{equation}\label{ash7}
				U (\varphi_{t_n}) \geq \kappa (| \varphi_{t_n} | - M) + U_{\ast}.
			\end{equation}
			Hence, by comparing \eqref{ash5} and \eqref{ash7} we have
			\[ | \varphi_{t_n} | \leq M + \frac{ {\bar{\lmd}}^2}{2 \zeta \kappa} \| h
			\|_1^2 + \frac{U^{\ast} - U_{\ast}}{\kappa}, \]
			which is contrary to the definition of $\varphi_{t_n}$.
			
			For the similar reason as before, if $| \varphi_0 | > M$, then we have
			\[ | \varphi_{t_n} | \leq | \varphi_0 | + \frac{ {\bar{\lmd}}^2}{2 \zeta
				\kappa} \| h \|_1^2 + \frac{U^{\ast 0} - U_{\ast 0}}{\kappa}, \]
			which is also contrary to the definition of $\varphi_{t_n}$.
			
			Consequently,
			we have
			\begin{equation}\label{ash8}
				| \varphi_t | \leq \left( M \vee | \varphi_0 |\right)  + \frac{ {\bar{\lmd}}^2}{2 \zeta
					\kappa} \| h \|_1^2 + \left( \frac{U^{\ast} - U_{\ast}}{\kappa} \vee \frac{U^{\ast
						0} - U_{\ast 0}}{\kappa}\right) , \qquad\forall t \in [0, T], 
			\end{equation}
			which implies that \eqref{aum1} has a unique solution in $C_x ([0, T] ;
			\mathbb{R}^d)$.

			\item[\bf b):]
			According to \eqref{ash8}, for any fixed
			initial point $x$ and $\{ h_n \} \subset K_{\alpha}$, $\{ S_x (h_n)
			\}$ are bounded in $[0, T].$ Thus, $b (S_x (h_n)) \in C ([0, T] ;
			\mathbb{R}^d)$ and $\sigma (S_x (h_n)) \in C ([0, T] ; \mathbb{R}^{d \times
				d})$ are bounded and Lipschitz continuous. Therefore, the proof of Lemma 2.5
			in {\cite{gfw}} can tell us that b) is true.
			
			\item[\bf c):]
			The proof of Theorem 2.9 in {\cite{gfw}}
			shows that c) is true for bounded Lipshitz continuous $b$ and $\sigma$.
			The following proof of general case comes from {\cite{gfw}}. We
			write it in more details.
			
			From \eqref{ash8}, under the conditions $\| h \|_1 \leq \alpha$ and $| x
			|=|\ph_0| \leq c$, we know that there exists a positive constant $C$ such that $|
			\varphi_t | \leq C$ for all $t \in [0, T]$. For any $\rho > 0$, we define
			\[ \tilde{b} (x) = \left\{ \begin{array}{l}
				b (x), \hspace{7em} | x | \leq C + 2 \rho,\\
				b \left( \frac{x}{| x |} (C + 2 \rho) \right), \qquad | x | > C + 2
				\rho,
			\end{array} \right. \]
			and
			\[ \tilde{\sigma} (x) = \left\{ \begin{array}{l}
				\sigma (x), \hspace{7em} | x | \leq C + 2 \rho,\\
				\sigma \left( \frac{x}{| x |} (C + 2 \rho) \right), \qquad | x | > C +
				2 \rho .
			\end{array} \right. \]
			It is easy to see that $\tilde{b}$ and $\tilde{\sigma}$ are bounded
			Lipschitz continuous. We set
			\[ \tilde{X}^{\varepsilon}_t = x + \int^t_0 \tilde{b}
			(\tilde{X}^{\varepsilon}_s) d s + \int^t_0 \varepsilon \tilde{\sigma}
			(\tilde{X}^{\varepsilon}_s) d W_s \]
			and
			\[ \tilde{\varphi}_t = x + \int^t_0 \tilde{b} (\tilde{\varphi}_s) d s +
			\int^t_0 \varepsilon \tilde{\sigma} (\tilde{\varphi}_s) \dot{h}_s d s. \]
			It is clear that $X_t^{\varepsilon}$ is equal to $\tilde{X}^{\varepsilon}_t$
			in indistinguishable sense up to exiting the ball $\{ | x | \leq C + 2 \rho
			\}$. Moreover, we have $\varphi_t =
			\tilde{\varphi}_t$ for $t \in [0, T]$, since $| \varphi_t | < C$ in $[0, T]$. For any $X_{\cdot}^{\varepsilon}
			(\omega) \in \{ \rho_{0 T} (X_{\cdot}^{\varepsilon}, \varphi_{\cdot}) >
			\rho, \rho_{0 T} (\varepsilon W_{\cdot}, h_{\cdot}) \leq \beta \}$, there
			exists a $t_0 \in [0, T]$ satisfying $| X_{t_0}^{\varepsilon} (\omega) -
			\varphi_{t_0} | > \rho$. Thus, there exists a $\eta \in (0, 0.5 \rho)$ such
			that $| X_{t_0}^{\varepsilon} (\omega) - \varphi_{t_0} | > \rho + \eta$.
			Because of~$X_0^{\varepsilon} = \varphi_0 = x$, there must exist a
			$t_1 \in [0, t_0]$ satisfying \ $| X_{t_1}^{\varepsilon} (\omega) -
			\varphi_{t_1} | = \rho + \frac{\eta}{2}$. Meanwhile, we have $X_{t_1}^{\varepsilon}
			(\omega) \in \{ x : | x | \leq C + 2 \rho \}$, which implies
			$\tilde{X}^{\varepsilon}_{t_1} (\omega) = X_{t_1}^{\varepsilon} (\omega)$.
			Hence, we have
			\[ \{ \rho_{0 T} (X_{\cdot}^{\varepsilon}, \varphi_{\cdot}) > \rho, \rho_{0
				T} (\varepsilon W_{\cdot}, h_{\cdot}) \leq \beta \} = \{ \rho_{0 T}
			(\tilde{X}_{\cdot}^{\varepsilon}, \tilde{\varphi}_{\cdot}) > \rho,
			\rho_{0 T} (\varepsilon W_{\cdot}, h_{\cdot}) \leq \beta \} . \]
			By virtue of the property that $\tilde{X}^{\varepsilon}_t$ and
			$\tilde{\varphi}_t$ satisfy c), we finish the proof.
		\end{trivlist}
	\end{proof}

	\begin{rem}\label{rem:ase}
		In the case that $b$ is bounded Lipschitz continuous and
		$\sigma$ is a constant, a),b) and c) are true. In fact, a) in Lemma \ref{lem:ash} is clearly true.
		Moreover, for this case, \eqref{1.2} can be pathwisely regarded as \eqref{aum1} for almost every $\omega\in \Omega$. Therefore, its solution is $X_\cdot^\eps=S_x(\eps W_\cdot)$ almost surely. 
		Furthermore, it can be proved by Gronwall inequality that the solution map $S_x
		(\cdot)$ is actually a uniformly continuous map from $ C_0([0,T];\R^d)$ to $ C_x([0,T];\R^d)$. Thus, b) and c) in Lemma \ref{lem:ash} are also true.
	\end{rem}

	By the result of Lemma \ref{lem:ash}, \cite{gfw} proves that the strong solution of \eqref{1.2} satisfies the LDP with respect to an initial point. However, it is not difficult to get the conclusion that the strong solution of \eqref{1.2} satisfies the LDP with respect to the initial point uniformly in any compact set from \cite{gfw}. 
	Now we can give a proof of  Proposition \ref{prop:LDP}, which is based on arguments in \cite{gfw}.
	
	\begin{proof}[\bf Proof of Proposition \ref{prop:LDP}]
		To show that $S_{0 T} (\varphi)$ is a good rate function, we need to prove
		that for any $\alpha > 0$, $\{ \varphi : S_{0 T} (\varphi) \leq \alpha \}$ is
		a compact set in $C_x ([0, T] ; \mathbb{R}^d)$. Indeed, by condition ii), we know that $\sigma$ is always invertible. Thus, $\{ \varphi
		: S_{0 T} (\varphi) \leq \alpha \} \subset C_x ([0, T] ; \mathbb{R}^d)$ is
		the image of the compact set $\left\{ h : \frac{1}{2} \| h \|^2_1 \leq \alpha
		\right\} \subset C ([0, T] ; \mathbb{R}^d)$ under $S_x (\cdot)$. According
		to condition b) of Lemma \ref{lem:ash}, we know that $S_x (\cdot)$ is a continuous map
		on $\left\{ h : \frac{1}{2} \| h \|^2_1 \leq \alpha \right\}$. Thus, $\{
		\varphi : S_{0 T} (\varphi) \leq \alpha \}$ is a compact set in $C_x ([0, T]
		; \mathbb{R}^d)$.
		
		By the proof of Theorem 2.4 in {\cite{gfw}}, following results are
		obtained: For any $x \in F$, every closed set $C \subset C_x ([0, T] ;
		\mathbb{R}^d)$ and every open set $G \subset C_x ([0, T] ; \mathbb{R}^d)$
		satisfying
		\[ \inf_{\varphi \in C} S_{0 T} (\varphi) < s_0, \inf_{\varphi \in G} S_{0
			T} (\varphi) < s_0, \]
		we have:
		\begin{trivlist}
			\item [i).]
			For any $x \in F$, there exists an $\varepsilon_0 = \varepsilon_0
			(F, s_0, C)$ such that
			\begin{equation}\label{ldpc}
				\lim \sup_{\varepsilon \rightarrow 0} \varepsilon^2 \ln P^{\varepsilon}_x
				(X_{\cdot}^{\varepsilon} \in C) \leq -
				\inf_{\varphi \in C} S_{0 T} (\varphi)
			\end{equation}
			for any $\varepsilon \in (0, \varepsilon_0) .$
			
			\item [ii).]
			For any $x \in F$, there exists an $\varepsilon_0 = \varepsilon_0
			(F, s_0, G)$ such that
			\begin{equation}\label{ldpo}
				\lim \inf_{\varepsilon \rightarrow 0} \varepsilon^2 \ln P^{\varepsilon}_x
				(X_{\cdot}^{\varepsilon} \in G) \geq -
				\inf_{\varphi \in G} S_{0 T} (\varphi)
			\end{equation}
			for any $\varepsilon \in (0, \varepsilon_0) .$
		\end{trivlist}
		
		From the standard fact of the LDP theory, we know that \eqref{ldpc}
		and \eqref{ldpo} imply \eqref{ldpl} and \eqref{ldpu}
		respectively.
		
	\end{proof}

\medskip

\section{Stability in the sense of quasi-potential}

In this section, we introduce the definition of stability in the sense of quasi-potential and its properties. These properties allow us to connect the stability in the sense of quasi-potential with the trajectory property of \eqref{1.1}. 

The definitions and notations about \eqref{1.2} below are from \cite{fw}.

\begin{defn}(\cite{fw})
	By the definition of $S_{T_1 T_2} (\cdot)$, for any $x, y
	\in \mathbb{R}^d$, the quasi-potential with respect to $x$ is defined as
	\[ V (x, y) = \inf \{ S_{T_1 T_2} (\varphi) : \varphi \in C ([T_1, T_2] ;
	\mathbb{R}^d), \varphi_{T_1} = x, \varphi_{T_2} = y, 0 \leq T_1 \leq T_2 \} . \]
	In the same way, for a set $D \subseteq \mathbb{R}^d$ and any $x, y
	\in D$, we define 
	\[ V_D (x, y) = \inf \{ S_{T_1 T_2} (\varphi) : \varphi \in C ([T_1, T_2] ;
	\bar{D}), \varphi_{T_1} = x, \varphi_{T_2} = y, 0 \leq T_1 \leq T_2 \} . \]
	For $x, y \in D$, the notation $x \thicksim_D y$ means $V_D (x, y) = V_D (y,
	x) = 0$. Obviously, this is an equivalent relation in $D$. A set $C \subseteq
	D$ is called an equivalent set, if for any $x, y \in C$, we have $x \thicksim_D
	y$. For the sake of simplicity, we replace $x
	\thicksim_D y$ by $x \thicksim  y$, if $D =\mathbb{R}^d$.
	
	Let $K_1, K_2, \ldots , K_l$ be different equivalent sets in $\mathbb{R}^d$, which
	satisfy $K_i \cap K_j = \varnothing$ and $x \nsim y$, for any $x \in K_i, y \in
	K_j, i \neq j$. We define
	\[ V (K_i, K_j) = \inf \{ S_{T_1 T_2} (\varphi) : \varphi \in C ([T_1, T_2] ;
	\mathbb{R}^d), \varphi_{T_1} \in K_i, \varphi_{T_2} \in K_j, 0 \leq T_1 \leq T_2
	\} \]
	and
	\[ V_D (K_i, K_j) = \inf \{ S_{T_1 T_2} (\varphi) : \varphi \in C ([T_1, T_2]
	; \bar{D}), \varphi_{T_1} \in K_i, \varphi_{T_2} \in K_j, 0 \leq T_1 \leq T_2 \} .
	\]
	Furthermore, we define
	\[ \tilde{V}  (K_i, K_j) = \inf \left\{ S_{T_1 T_2} (\varphi) : \varphi \in C
	\left( [T_1, T_2] ; \mathbb{R}^d \backslash \bigcup_{s \neq i, j} K_s
	\right), \varphi_{T_1} \in K_i, \varphi_{T_2} \in K_j, 0 \leq T_1 \leq T_2
	\right\} . \]
	If there is no such $\varphi$, then we set $\tilde{V}  (K_i, K_j) = + \infty$.
	We also define
	\[ \tilde{V}_D (K_i, K_j) = \inf \left\{ S_{T_1 T_2} (\varphi) : \varphi \in C
	\left( [T_1, T_2] ; \bar{D} \backslash \bigcup_{s \neq i, j} K_s \right),
	\varphi_{T_1} \in K_i, \varphi_{T_2} \in K_j, 0 \leq T_1 \leq T_2\right\} . \]
	If there is no such $\varphi$, then we set $\tilde{V}_D (K_i, K_j) = +\infty$.
	
	Let $C \subset \mathbb{R}^d$ be an arbitrary set. We use $\partial C$ to represent the boundary of $C$.
	For any $\delta > 0$, we set 
	
	$$C_{\delta} = \{ x :\exists y \in C,\ | x - y | < \delta \}$$
	and
	\[ C_{- \delta} = \{ x : x \in C,\ \min_{y \in \partial C}  | x - y | >\delta \} . \]
	Furthermore, if $C$ is a compact set with smooth boundary, for a point $x$ lying between
	$\partial C$ and $\partial (C_{- \delta})$, we denote the closest point on
	$\partial (C_{- \delta})$ by $x_{- \delta}$.
\end{defn}

\begin{defn}(\cite{fw})
	A set $C \subset \mathbb{R}^d$ is called a {\tmstrong{stable set}} of equation
	\eqref{1.2}, if $V (x, y) > 0$ for any $x \in C$ and any $y \nin C$. Otherwise,
	$C$ is called an {\tmstrong{unstable set}}.
\end{defn}

In the proof of the main result, we will use some important assumptions. For the sake of clarity, we list them here. Let us set $\mathcal{L}= \{ 1, 2, \ldots, l \}$ and introduce the following assumption.

\begin{assumption}\label{mainassumption}
	$\,$
	\begin{trivlist}
		\item[1).] There is a finite number of compact
		equivalent sets $K_1, K_2, \ldots, K_l$, all of which are contained in $B_0(M)$, satisfying $x \nsim y$ for $x \in K_i,\, y \in K^c_i, \, i \in
		\mathcal{L}$. Furthermore, every $\omega$-limit set of \eqref{1.1} is
		contained entirely in one of the $K_i,\; i \in \mathcal{L}$.
		
		\item[2).] Let $U$ be the one in  i) of Proposition \ref{prop:LDP}.
		For any $x \in \mathbb{R}^d \backslash \cup_{i \in \mathcal{L}}
		K_i$,
		we suppose $\nabla U (x) \neq 0$. Moreover, 
		there exists a constant $\tilde{\delta} > 0$ and two constants
		$k_1 \ge k_2>0 $, such that for any $\delta', \delta''$ satisfying $0 <
		(\frac{k_1}{k_2})^{\frac{1}{2}} \delta'' < \delta' < \tilde{\delta}$, we have
		\begin{equation}\label{con23}
			\min_{x \in \partial (K_i)_{\delta''}, y \in \partial (K_i)_{\delta'}} \Big(U
			(y) - U (x)\Big) \geq k_2 (\delta')^2 - k_1 (\delta'')^2,
		\end{equation}
		for any stable $K_i, i \in \mathcal{L}$.

		\item[3).] Let $\tilde{U} (x) \in C^2 (\mathbb{R}^d;\mathbb{R}^+)$ be a function satisfying $\lim_{| x | \uparrow + \infty} \tilde{U} (x) = + \infty $. We suppose that for any small $\delta>0$, there exist two constants 
		$\varepsilon_{\delta}, \chi_{\delta} \in (0, + \infty)$, such that we have 
		\begin{equation}\label{con25}
			\frac{\varepsilon^2}{2} \sum_{i, j} a_{i j} (x) \frac{\partial^2}{\partial
				x_i \partial x_j} \tilde{U} (x) + \nabla \tilde{U} (x) \cdot b(x) < -
			\chi_{\delta},
		\end{equation}
		for any $\varepsilon \in (0, \varepsilon_{\delta})$ and $x \in \mathbb{R}^d \backslash \cup_{i \in \mathcal{L}} \left(K_i\right)_{\delta}$.
		Here $(a_{i j} (x))_{d \times d}$ is $\sigma \sigma^T (x)$.
	\end{trivlist}	
\end{assumption}

\begin{rem}\label{rem:generalpositivecon}
	Let  $H(U(x))$  denote the Hessian matrix of $U$ at $x$. If $K_i$ is a stable point, then the positive definiteness of $H(U(K_i))$ guarantee that \eqref{con23} holds. In general, for any $n\in\mathbb{N}^{+}$, \eqref{con23} can be replaced by 
	\begin{equation*}
		\min_{x \in \partial (K_i)_{\delta''}, y \in \partial (K_i)_{\delta'}} \Big(U
		(y) - U (x)\Big) \geq k_2(\delta')^{2n} - k_1 (\delta'')^{2n},
	\end{equation*}
	for any  $0 < (\frac{k_1}{k_2})^{\frac{1}{2n}}  \delta'' < \delta' < \tilde{\delta}$.
\end{rem}

\begin{rem}\label{rem:con2simplification}
	If $U$ satisfies the following condition:
	\begin{trivlist}
		\item[3)$'$.]Let $\chi$ and $\varepsilon_1$ be two positive constants. We suppose that $U\in C^2 (\mathbb{R}^d;\mathbb{R}^+)$ satisfies 
		\begin{equation}
			\frac{\varepsilon^2}{2} \sum_{i, j} a_{i j} (x) \frac{\partial^2}{\partial
				x_i \partial x_j} U(x) -\zeta \left|\nabla U(x) \right|^2 < -
			\chi,
		\end{equation}
		for any $\varepsilon\in \left( 0, \varepsilon_1\right) $ and $ x \in B_0^c(M)$. 
	\end{trivlist}
	Then we can take $U(x)$ as $\tilde{U}(x)$ and reduce 3) of Assumption \ref{mainassumption} to 3)$'$.
\end{rem}	

\begin{lem}\label{lem:lowerboundedofqp}
	Under conditions i) and ii) of Proposition \ref{prop:LDP}, the quasi-potential of equation \eqref{1.2} has a lower estimate
	\begin{equation}\label{lowerboundedofqp}
		V (x, y) \geq \frac{2 \zeta}{{\bar{\lmd}}^2} (U (y) - U (x)),
	\end{equation}
	for any $x, y \in \mathbb{R}^d$.
\end{lem}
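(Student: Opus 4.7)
The plan is to prove the inequality pathwise: for any absolutely continuous $\varphi : [T_1, T_2] \to \mathbb{R}^d$ with $\varphi_{T_1} = x$ and $\varphi_{T_2} = y$, I will establish $S_{T_1 T_2}(\varphi) \geq \tfrac{2\zeta}{\bar{\lambda}^2}\bigl(U(y) - U(x)\bigr)$, and then take the infimum. Paths on which $S_{T_1 T_2}$ is infinite contribute nothing, and the infimum over $T_1, T_2$ does not affect the right-hand side, so the bound for $V(x, y)$ follows immediately.

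For the pathwise bound, the starting point is the chain rule for $U \circ \varphi$:
\[
U(\varphi_{T_2}) - U(\varphi_{T_1}) = \int_{T_1}^{T_2} \nabla U(\varphi_t) \cdot \dot\varphi_t \, dt = \int_{T_1}^{T_2} \nabla U(\varphi_t) \cdot b(\varphi_t) \, dt + \int_{T_1}^{T_2} \nabla U(\varphi_t) \cdot (\dot\varphi_t - b(\varphi_t)) \, dt.
\]
For the first integral I would apply \eqref{ash2} from condition i) to get $\nabla U \cdot b \leq -\zeta |\nabla U|^2$. For the second integral I would use the weighted Cauchy--Schwarz inequality with respect to $a(\varphi_t) := \sigma\sigma^T(\varphi_t)$: writing $v = \dot\varphi_t - b(\varphi_t)$,
\[
|\nabla U \cdot v| \leq \bigl(a \nabla U \cdot \nabla U\bigr)^{1/2} \bigl(a^{-1} v \cdot v\bigr)^{1/2}.
\]
Condition ii) gives $\|\sigma\| \leq \bar\lambda$, so $a \nabla U \cdot \nabla U = |\sigma^T \nabla U|^2 \leq \bar\lambda^2 |\nabla U|^2$.

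Applying Young's inequality $ab \leq \zeta a^2 + \tfrac{b^2}{4\zeta}$ with $a = |\nabla U|$ and $b = \bar\lambda (a^{-1} v \cdot v)^{1/2}$, I obtain
\[
|\nabla U \cdot v| \leq \zeta |\nabla U|^2 + \frac{\bar\lambda^2}{4\zeta} \, a^{-1} v \cdot v.
\]
Combining everything cancels the $\zeta |\nabla U|^2$ terms and yields
\[
U(\varphi_{T_2}) - U(\varphi_{T_1}) \leq \frac{\bar\lambda^2}{4\zeta}\int_{T_1}^{T_2} a^{-1}(\varphi_t)\,(\dot\varphi_t - b(\varphi_t)) \cdot (\dot\varphi_t - b(\varphi_t)) \, dt = \frac{\bar\lambda^2}{2\zeta}\, S_{T_1 T_2}(\varphi),
\]
which is exactly the desired bound after rearrangement.

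There is no real obstacle here beyond being careful with the quadratic form associated with $(\sigma\sigma^T)^{-1}$: one must resist the temptation to bound $|\nabla U \cdot v|$ by $|\nabla U||v|$ and then insert the ellipticity constants, because doing so would give $\underline{\lambda}$ rather than $\bar\lambda$ and spoil the constant. Using the weighted Cauchy--Schwarz against $a$ itself is what produces the correct factor $\bar\lambda^2$. The only other minor point is that when $S_{T_1 T_2}(\varphi) < \infty$, the uniform ellipticity in condition ii) ensures $\dot\varphi \in L^2$ on $[T_1, T_2]$, so the chain rule applied to $U \in C^1$ along the absolutely continuous curve $\varphi$ is justified.
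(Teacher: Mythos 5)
Your argument is correct and is essentially the paper's own proof run forward from the chain rule for $U\circ\varphi$ rather than backward from $S_{T_1T_2}(\varphi)$: with $v=\dot\varphi_t-b(\varphi_t)$, the paper first coarsens $(\sigma\sigma^T)^{-1}v\cdot v\geq\bar\lambda^{-2}|v|^2$ and then completes the square $|v|^2=\lvert(v-2\zeta\nabla U)+2\zeta\nabla U\rvert^2$ using \eqref{ash2}, which is algebraically the same manipulation as your weighted Cauchy--Schwarz followed by Young's inequality and gives the identical constant. One small clarification: your closing warning against bounding $|\nabla U\cdot v|\leq|\nabla U|\,|v|$ first is unfounded, since the inequality $|v|^2\leq\bar\lambda^2\,(\sigma\sigma^T)^{-1}v\cdot v$ comes from the upper bound $\lVert\sigma\rVert\leq\bar\lambda$ rather than from the ellipticity constant $\underline\lambda$, so this coarsening (which is exactly what the paper does) recovers the same constant $2\zeta/\bar\lambda^2$.
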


\begin{proof}
	By i) and ii) of Proposition \ref{prop:LDP}, for any $x, y \in \mathbb{R}^d$, $T \in [0, + \infty)$ and $\varphi \in \{\varphi\in AC_x ([0, T] ; \mathbb{R}^d):\varphi_T=y\}$, we have
	\begin{eqnarray*}
		S_{0 T} (\varphi) & = & \frac{1}{2} \int^T_0 | \sigma^{- 1} (\varphi_t)
		(\dot{\varphi}_t - b (\varphi_t)) |^2 d t\\
		& \geq & \frac{1}{2 {\bar{\lmd}}^2} \int^T_0 | \dot{\varphi}_t - b
		(\varphi_t) |^2 d t\\
		& = & \frac{1}{2 {\bar{\lmd}}^2} \int^T_0 | \dot{\varphi}_t - b
		(\varphi_t) - 2 \zeta \nabla U (\varphi_t) + 2 \zeta \nabla U (\varphi_t)
		|^2 d t\\
		& \geq & \frac{1}{2 {\bar{\lmd}}^2} \int_0^T | \dot{\varphi}_t - b
		(\varphi_t) - 2 \zeta \nabla U (\varphi_t) |^2 d t + \frac{2
			\zeta}{{\bar{\lmd}}^2} \int_0^T (\dot{\varphi}_t, \nabla U (\varphi_t)) d
		t\\
		& \geq & \frac{2 \zeta}{{\bar{\lmd}}^2} (U (\varphi_T) - U (\varphi_0)) =
		\frac{2 \zeta}{{\bar{\lmd}}^2} (U (y) - U (x)) .
	\end{eqnarray*}
	Thus, $V (x, y) \geq \frac{2 \zeta}{{\bar{\lmd}}^2} (U (y) - U (x))$.
	
\end{proof}

\begin{cor}\label{cor:infinitycost}
	Let us suppose that the conditions i) and ii) of Proposition \ref{prop:LDP} hold. Then we have
	\begin{equation}\label{eq:infinitycost}
		\lim_{| y | \uparrow + \infty} V (x, y) = + \infty, \quad x\in \R^d.
	\end{equation}
\end{cor}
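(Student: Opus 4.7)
The plan is straightforward: combine the lower bound on the quasi-potential from Lemma \ref{lem:lowerboundedofqp} with a coercivity property of $U$ that follows from condition i) of Proposition \ref{prop:LDP}.

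First, by Lemma \ref{lem:lowerboundedofqp}, for every fixed $x \in \mathbb{R}^d$ and every $y \in \mathbb{R}^d$ we already have
\[ V(x,y) \;\geq\; \frac{2\zeta}{\bar{\lambda}^2}\bigl(U(y)-U(x)\bigr). \]
Since $U(x)$ is a fixed finite number, it therefore suffices to show that $U(y)\to+\infty$ as $|y|\to+\infty$.

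Second, to establish the coercivity of $U$, I would reuse exactly the estimate \eqref{ash6} proved during part a) of Lemma \ref{lem:ash}. There, starting from \eqref{ash3}, a mean-value argument along the segment joining $\frac{y}{|y|}M$ to $y$ yields, for every $y$ with $|y|>M$,
\[ U(y) \;-\; U\!\left(\tfrac{y}{|y|}M\right) \;\geq\; \kappa\bigl(|y|-M\bigr). \]
Since $U\geq 0$, this gives $U(y)\geq \kappa(|y|-M)$ for $|y|>M$, so $\lim_{|y|\to+\infty}U(y)=+\infty$. Plugging this into the displayed inequality from Lemma \ref{lem:lowerboundedofqp} then immediately gives $\lim_{|y|\to+\infty}V(x,y)=+\infty$, which is \eqref{eq:infinitycost}.

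There is essentially no obstacle: both ingredients are already available in the excerpt, and the proof amounts to quoting Lemma \ref{lem:lowerboundedofqp} together with the radial lower bound \eqref{ash6} derived from \eqref{ash3}. The only small point worth being explicit about is that the constant $U(x)$ on the right-hand side of the lower bound is harmless for the limit since the limit is taken in $y$ with $x$ held fixed.
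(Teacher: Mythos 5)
Your proposal is correct and matches the paper's intended argument: the corollary is stated immediately after Lemma \ref{lem:lowerboundedofqp} precisely because it follows from the bound $V(x,y)\geq\frac{2\zeta}{\bar\lambda^2}(U(y)-U(x))$ together with the radial growth $U(y)\geq\kappa(|y|-M)+U_\ast$ for $|y|>M$ obtained from \eqref{ash3} via \eqref{ash6}. This is exactly the combination the authors invoke later (e.g.\ in the proof of Lemma \ref{lem:incompaceset}), so no gap remains.
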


The following Corollary \ref{rem:howtogetstable} and Proposition \ref{prop:sstoos} provide a way to determine whether a set is stable or not. This method will be used in Example \ref{Bernoulli}, Example \ref{Non-symmetrical Example} and Example \ref{duffing}.

\begin{cor}\label{rem:howtogetstable}
	Suppose that conditions i) and ii) of Proposition \ref{prop:LDP} hold. 
	If for some $i\in\cl L$, 
	there exists  $\delta>0$, such that for any $x \in K_i, y \in (K_i)_{\delta} \backslash K_i $, we have $U(y)-U(x) > 0$, then $K_i$ is a stable set.
\end{cor}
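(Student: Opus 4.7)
The plan is to establish $V(x,y)>0$ for every $x\in K_i$ and $y\notin K_i$ by combining Lemma \ref{lem:lowerboundedofqp} with a first-exit argument from a slightly smaller tube $(K_i)_{\delta/2}$.

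The first step is to upgrade the pointwise hypothesis into a uniform one on $K_i$. Since $K_i$ is an equivalent set, $V(x,y)=V(y,x)=0$ for any $x,y\in K_i$, and Lemma \ref{lem:lowerboundedofqp} applied in both directions forces $U(x)=U(y)$; hence $U$ takes a single value $U_0$ on $K_i$. The hypothesis then reads $U(y)>U_0$ for every $y\in(K_i)_\delta\setminus K_i$. The set $\partial(K_i)_{\delta/2}$ is compact (bounded because $K_i$ is compact, closed by definition) and, provided $\delta/2<\delta$, lies in $(K_i)_\delta\setminus K_i$; by continuity of $U$ the minimum $m:=\min_{z\in\partial(K_i)_{\delta/2}}\bigl(U(z)-U_0\bigr)$ is attained and strictly positive.

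For $y\notin K_i$ I would then split into two cases. If $y\in(K_i)_\delta\setminus K_i$, Lemma \ref{lem:lowerboundedofqp} gives $V(x,y)\ge\tfrac{2\zeta}{\bar\lambda^2}(U(y)-U_0)>0$ directly. If $y\notin(K_i)_\delta$, every continuous path $\varphi\colon[T_1,T_2]\to\mathbb{R}^d$ joining $x$ to $y$ must first cross $\partial(K_i)_{\delta/2}$ at some time $\tau$; setting $z=\varphi_\tau$, additivity of the action together with Lemma \ref{lem:lowerboundedofqp} yield
\[ S_{T_1 T_2}(\varphi)\ge S_{T_1\tau}(\varphi|_{[T_1,\tau]})\ge V(x,z)\ge \tfrac{2\zeta}{\bar\lambda^2}\,m. \]
Taking the infimum over all admissible $\varphi$ produces $V(x,y)\ge\tfrac{2\zeta}{\bar\lambda^2}\,m>0$.

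The step I expect to require the most care is the constancy of $U$ on $K_i$: without this observation, the pointwise strict inequality $U(y)>U(x)$ need not deliver a uniform gap between $U|_{K_i}$ and $U$ on the surrounding shell, so the compactness argument producing $m>0$ would break down. Once constancy is in hand, the remainder is a routine use of Lemma \ref{lem:lowerboundedofqp} combined with the first-exit concatenation.
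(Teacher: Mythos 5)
Your proof is correct and is the intended elaboration of Lemma \ref{lem:lowerboundedofqp}, which the paper invokes implicitly (the corollary is stated without a written proof): the lower bound \eqref{lowerboundedofqp} handles $y\in(K_i)_\delta\setminus K_i$ directly, while the first-exit concatenation through the compact shell $\partial(K_i)_{\delta/2}$ supplies the uniform gap $m>0$ needed to bound $V(x,y)$ away from zero over \emph{all} admissible paths once $y\notin(K_i)_\delta$. Your observation that $U$ is constant on $K_i$ (from applying \eqref{lowerboundedofqp} in both directions, using $V(x,y)=V(y,x)=0$ on the equivalent set) cleanly justifies the single value $U_0$, though compactness of $K_i\times\partial(K_i)_{\delta/2}$ and continuity of $U$ would also yield $m>0$ without it.
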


The following lemma from {\cite{fw}} illustrates the continuity of the
quasi-potential.

\begin{lem}\label{lem:qpcontinuous}(\cite{fw})
	Let $C \subset \mathbb{R}^d$ be a convex compact set. Then there exists a constant $L=L(C)$, such that for any
	$x, y \in C$,  there exists  $\varphi \in C ([0, T] ; \mathbb{R}^d)$
	with $\varphi_0 = x, \varphi_T = y, T = | x - y |$
	and
	\[ S_{0 T} (\varphi) \leq L | x - y | . \]
\end{lem}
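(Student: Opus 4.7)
The plan is to build an explicit candidate path, namely the unit-speed straight line from $x$ to $y$, and bound its action directly using the uniform bounds on $b$ and $(\sigma\sigma^T)^{-1}$ that are available on the compact set $C$.

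First I would dispose of the trivial case $x=y$ (take the constant path, $T=0$, $S_{0T}=0$). For $x\neq y$, set $T=|x-y|$ and define $\varphi_t = x + \frac{t}{T}(y-x)$ for $t\in[0,T]$. The essential use of convexity is here: since $C$ is convex, this segment lies entirely in $C$, so $b$ is uniformly bounded along the path. Concretely, continuity of $b$ on the compact set $C$ yields $B:=\max_{z\in C}|b(z)|<\infty$, while condition ii) of Proposition \ref{prop:LDP} gives a uniform bound $\lambda_{\max}((\sigma\sigma^T)^{-1}(z))\le \underline{\lambda}^{-1}$ for all $z$.

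Next I would plug this path into the action functional \eqref{af1}. Since $|\dot\varphi_t|=1$ for all $t\in[0,T]$, the triangle inequality gives $|\dot\varphi_t-b(\varphi_t)|\le 1+B$, and the eigenvalue bound on $(\sigma\sigma^T)^{-1}$ yields
\[
(\dot\varphi_t-b(\varphi_t))^T(\sigma\sigma^T)^{-1}(\varphi_t)(\dot\varphi_t-b(\varphi_t))\le \frac{(1+B)^2}{\underline{\lambda}}.
\]
Integrating over $[0,T]$ and using $T=|x-y|$ produces
\[
S_{0T}(\varphi)\le \frac{(1+B)^2}{2\underline{\lambda}}\,|x-y|,
\]
so the constant $L=L(C):=\frac{(1+B)^2}{2\underline{\lambda}}$ does the job, and depends on $C$ only through $B$.

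There is no real obstacle: the argument is essentially a direct computation. The only point that needs attention is justifying that the straight-line segment between $x$ and $y$ stays in a region where $b$ admits a uniform bound; convexity of $C$ is precisely what delivers this, and it is the only place convexity is used. If one wanted to relax convexity (e.g.\ to a general compact $C$), one would instead fix a larger compact neighbourhood of $C$ containing all segments $[x,y]$ with $x,y\in C$, and repeat the same estimate with $B$ replaced by $\max$ of $|b|$ on that enlarged set.
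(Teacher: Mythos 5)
Your proof is correct and takes exactly the same route as the paper: the paper likewise uses the unit-speed straight-line segment $\varphi_t = x + \frac{t}{|x-y|}(y-x)$ and appeals to boundedness of the coefficients on the compact set, simply stating the conclusion without writing out the computation. Your version merely fills in the explicit bound $L = (1+B)^2/(2\underline{\lambda})$; the one minor cosmetic difference is that you invoke condition ii) of Proposition~\ref{prop:LDP} for the lower bound on $\sigma\sigma^T$, whereas on a compact $C$ continuity of $(\sigma\sigma^T)^{-1}$ alone would suffice, which is closer to what the paper calls ``local boundness.''
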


\begin{proof}
	We choose $\varphi$ as
	\[ \varphi_t = x + \frac{t}{| x - y |} (y - x), \quad t \in [0, | x - y |], \]
	then by the local boundness of coefficients $b$ and $\sigma$, we can finish the proof.
	
\end{proof}

\begin{prop}\label{prop:sstoos}
	If $K \subset \mathbb{R}^d$ is a stable set of equation \eqref{1.2},
	then for any $\delta > 0$, there exists a $\delta' \in (0, \delta)$ such that
	the solution of equation \eqref{1.1} starting at $x_0 \in K_{\delta'}$ does not
	leave $K_{\delta}$.
\end{prop}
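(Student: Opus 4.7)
The plan is to argue by contradiction and combine three ingredients: the vanishing of the action functional along deterministic trajectories, the continuity estimate for the quasi-potential from Lemma \ref{lem:qpcontinuous}, and a compactness/extraction argument that lets us transfer the conclusion to fixed points of $K$ and $\partial K_\delta$. In the intended application $K$ is one of the compact equivalent sets $K_i\subset B_0(M)$, so I shall take $K$ compact throughout; this also makes $\overline{K_\delta}$ compact.

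Suppose the claim fails. Then there exist $\delta>0$ and sequences $\delta'_n\downarrow 0$, $x_n\in K_{\delta'_n}$, and $T_n>0$ such that $X_{T_n}(x_n)\notin K_\delta$. Set
\[
T'_n=\inf\{t\ge 0:X_t(x_n)\notin K_\delta\}.
\]
For $n$ large enough, $x_n\in K_\delta$, so $T'_n>0$ and by continuity of the flow $y_n:=X_{T'_n}(x_n)\in\partial K_\delta$. Pick $z_n\in K$ with $|x_n-z_n|<\delta'_n$. Since $K$ is compact and $\partial K_\delta\subset\overline{K_\delta}$ is compact, after passing to subsequences we may assume $z_n\to z^\ast\in K$, $x_n\to z^\ast$, and $y_n\to y^\ast\in\partial K_\delta$. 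Note $y^\ast\notin K$ because $\mathrm{dist}(y^\ast,K)=\delta>0$.

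Next I shall show $V(z^\ast,y^\ast)=0$, contradicting the stability of $K$. The idea is to concatenate three paths. First, applying Lemma \ref{lem:qpcontinuous} on a convex compact set containing $z^\ast$ and $x_n$ (e.g.\ a closed ball), I obtain a path $\psi^{(1)}_n$ from $z^\ast$ to $x_n$ with $S(\psi^{(1)}_n)\le L|z^\ast-x_n|$. Second, the deterministic trajectory $\psi^{(2)}_n=X_{\cdot}(x_n)$ on $[0,T'_n]$ connects $x_n$ to $y_n$ and has $\dot\psi^{(2)}_n=b(\psi^{(2)}_n)$, so $S(\psi^{(2)}_n)=0$ directly from \eqref{af1}. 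Third, Lemma \ref{lem:qpcontinuous} applied on a convex compact set containing $y_n$ and $y^\ast$ yields $\psi^{(3)}_n$ from $y_n$ to $y^\ast$ with $S(\psi^{(3)}_n)\le L|y_n-y^\ast|$. Concatenating and using the subadditivity of $V$ in intermediate endpoints,
\[
V(z^\ast,y^\ast)\le L|z^\ast-x_n|+0+L|y_n-y^\ast|\longrightarrow 0,
\]
hence $V(z^\ast,y^\ast)=0$. But $z^\ast\in K$ and $y^\ast\notin K$, so the stability hypothesis forces $V(z^\ast,y^\ast)>0$, a contradiction.

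The main obstacle is the last step: one might worry that stability gives only pointwise positivity of $V$ and not a uniform lower bound on $K\times \partial K_\delta$, so one cannot directly bound $V(z_n,y_n)$ away from $0$. The contradiction/extraction device above sidesteps this by transferring the vanishing to a fixed limiting pair $(z^\ast,y^\ast)$, where stability applies verbatim. The only mild technical point is verifying that Lemma \ref{lem:qpcontinuous} is applicable on each bounded convex neighbourhood traversed; this is immediate because $b$ and $\sigma$ are locally Lipschitz (hence bounded on the compact neighbourhoods used), and the constant $L$ may be chosen depending only on a fixed compact set containing $\overline{K_\delta}$ together with small tolerances for $z^\ast,y^\ast$.
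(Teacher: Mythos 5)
Your proposal is correct and follows essentially the same route as the paper: argue by contradiction, extract convergent subsequences $x_n\to z^\ast\in K$ and (first-exit points) $y_n\to y^\ast\notin K$, then deduce $V(z^\ast,y^\ast)=0$ by chaining the local-continuity estimate of Lemma~\ref{lem:qpcontinuous} with the zero-cost deterministic trajectory, contradicting stability. The only cosmetic differences are that the paper collects the trajectory points in the precompact annulus $K_{\delta+1}\setminus K_\delta$ rather than isolating the first exit time on $\partial K_\delta$, and it passes directly to quasi-potential inequalities instead of writing out the concatenated paths; neither affects the substance.
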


\begin{proof}
	We prove this by contradiction. If not, there exists
	a $\delta > 0$ such that for every $n \in \mathbb{N}^+$ satisfying
	$\frac{1}{n} < \delta$ there exists an $x_n \in K_{\frac{1}{n}}$ and a $t_n$
	satisfying $X_{t_n} (x_n) \in K_{\delta + 1} \backslash K_{\delta}$. Thus,
	there exists an $x^{\ast} \in K$ and a $y^{\ast} \in K_{\delta + 1}
	\backslash K_{\delta}$ such that we can choose a subsequence of $\{ x_n \}$
	still denoted by $\{ x_n \}$, satisfying $x_n \rightarrow x^{\ast}$ and
	$X_{t_n} (x_n) \rightarrow y^{\ast}$ as $n \uparrow +\infty$.
	
	We claim that $V (x^{\ast}, y^{\ast}) = 0$. Because of Lemma \ref{lem:qpcontinuous},
	for any $\epsilon > 0$ there exists an $n$ such that $V (x^{\ast}, x_n) <
	\epsilon$ and $V (X_{t_n} (x_n), y^{\ast}) < \epsilon$. Therefore, we have
	$V (x^{\ast}, y^{\ast}) = V (x^{\ast}, x_n) + V (x_n, X_{t_n} (x_n)) + V
	(X_{t_n} (x_n), y^{\ast}) < 2 \epsilon$. Hence, we have $V (x^{\ast},
	y^{\ast}) = 0$, which contradicts to the fact that $K$ is a stable set.
	
\end{proof}

\begin{rem}\label{rem:ssandos}
	Proposition \ref{prop:sstoos} shows that a stable set defined
	by the quasi-potential is also a stable set in the sense of deterministic
	dynamical systems.
\end{rem}

\begin{prop}\label{prop:existoness}
	Under condition 1) of Assumption \ref{mainassumption}, there exists at least one stable $K_i$ for some $i \in \cl L$.
\end{prop}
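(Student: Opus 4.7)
The plan is to argue by contradiction. Assume every $K_i$, $i\in\cl L$, is unstable; I will construct a directed cycle of zero-cost transitions among the $K_i$'s that forces two distinct $K_i$'s to contain equivalent points under $\sim$, violating condition 1) of Assumption \ref{mainassumption}.

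First, for each $i$ I pick $x_i^\ast\in K_i$ and $y_i^\ast\notin K_i$ with $V(x_i^\ast,y_i^\ast)=0$. The triangle inequality $V(u,w)\le V(u,v)+V(v,w)$ (obtained by concatenating paths) together with the fact that $V(x,x_i^\ast)=0$ for every $x\in K_i$ gives $V(x,y_i^\ast)=0$ for every $x\in K_i$. Next I follow the deterministic trajectory $X_\cdot(y_i^\ast)$ of \eqref{1.1}: by condition 1) of Assumption \ref{mainassumption}, its $\omega$-limit set is contained entirely in some $K_{\phi(i)}$. Since the deterministic segment has zero action, and any remaining gap to a limit point $z\in\omega(y_i^\ast)$ can be closed by a short trajectory of arbitrarily small cost (Lemma \ref{lem:qpcontinuous}), I obtain $V(y_i^\ast,z)=0$, and hence $V(x_i^\ast,z)=0$, for every $z\in\omega(y_i^\ast)\subset K_{\phi(i)}$. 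If $\phi(i)=i$, then pick $z\in K_i$; combining $V(z,x_i^\ast)=0$ (equivalence inside $K_i$) with $V(x_i^\ast,y_i^\ast)=0$ yields $V(z,y_i^\ast)=0$, and together with $V(y_i^\ast,z)=0$ this gives $y_i^\ast\sim z$; but $y_i^\ast\notin K_i$ while $z\in K_i$ contradicts the maximality assertion $x\nsim y$ for $x\in K_i,\,y\in K_i^c$ in condition 1). Therefore $\phi(i)\ne i$ for every $i$.

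Since $\cl L$ is finite, iterating $\phi$ must produce a cycle $i_1\to i_2\to\cdots\to i_k\to i_1$ of length $k\ge 2$. Along each edge $i_m\to i_{m+1}$ the combination of the zero-cost exit from $K_{i_m}$, the deterministic drift to $K_{i_{m+1}}$, and the intra-class equivalence in both classes yields $V(x,y)=0$ for every $x\in K_{i_m}$ and every $y\in K_{i_{m+1}}$. Concatenating around the cycle via the triangle inequality gives both $V(x,y)=0$ and $V(y,x)=0$ for $x\in K_{i_1}$, $y\in K_{i_2}$, so $x\sim y$ across two distinct equivalent sets. This contradicts condition 1), completing the proof.

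The main obstacle is the middle step establishing $\phi(i)\ne i$: one has to convert the purely variational statement $V(x_i^\ast,y_i^\ast)=0$ into an actual deterministic escape from $K_i$ to a genuinely different $K_{\phi(i)}$, and this relies crucially on maximality of each $K_i$ as a $\sim$-class and on Assumption \ref{mainassumption} 1) placing every $\omega$-limit set inside some $K_s$.
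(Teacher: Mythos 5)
Your proof is correct and follows essentially the same route as the paper's: assume every $K_i$ is unstable, exit each $K_i$ at zero cost, ride the deterministic flow to some $K_{\phi(i)}$ (using that every $\omega$-limit set sits inside a single $K_s$), rule out $\phi(i)=i$ by deriving $y_i^\ast\sim K_i$, and then use finiteness of $\cl L$ to produce a cycle of length at least two, whose two-way zero-cost connection between distinct $K$'s violates condition 1). The only difference is bookkeeping: by introducing the map $\phi$ explicitly and handling $\phi(i)=i$ inside the main argument, you absorb the paper's separate treatment of the case $l=1$ into the same contradiction, which makes the structure a little cleaner; otherwise the two arguments are the same.
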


\begin{proof}
	We prove this result by contradiction. If all the $K_i$ are unstable, then
	there exists an $x_i \in K_i$ and a $y_i \in K^c_i$
	satisfying $V (x_i, y_i) = 0$ for any $i\in\cl L$. Since all the $\omega$-limit sets are in
	$\cup_{s \in \mathcal{L}} K_s$, there must be a set $K_j$ satisfying $V (y_i,
	K_j) = 0$. 
	
	If $j=i$, then we have $y_i \sim K_i$, which is
	contradict to condition 1) of Assumption \ref{mainassumption}. If $j\neq i$, then
	we have $V (K_i, K_j) = 0$. Repeating the above steps, 
	there is a pair  $(K_m, K_n)$ admitting $V (K_m, K_n) = V
	(K_n, K_m) = 0$ because of the finiteness of the set $\cl L$. Condition 1) of Assumption \ref{mainassumption} implies $l = 1$. Therefore, we only
	need to exclude the case that there is only one unstable $K_1$.
	
	If this is the case, there exists an $x \in K_1$ and a $y \in \mathbb{R}^d
	\backslash K_1$ such that $V (x, y) = 0$. However, by 1) of Assumption \ref{mainassumption}, $K_1$ contains all $\omega$-limit sets
	of equation \eqref{1.1}. Thus, $X_t (y)$ converges into $K_1$ as $t\rightarrow\infty$, which implies $V
	(y, K_1) = 0$. Hence, we have $y \thicksim K_1$, which is contradict to condition
	1) of Assumption \ref{mainassumption}.
	
\end{proof}

\medskip

\section{The concentration of the weak limitation of $\{ \mu_{\varepsilon} \}$}

In this section, we show concentration phenomena of $\{\mu_{\varepsilon} \}$ by the LDP method. To construct invariant measures $\{\mu_\eps\}$, we use the method in \cite{km} by the aid of Markov chains. Transition probabilities of Markov chains are estimated in Proposition \ref{prop:transpb}. Lemma \ref{lem:qponcompactset}-Lemma \ref{lem:fw2} are preparations for Proposition \ref{prop:transpb}.

From now on, we assume $\{\mu_{\varepsilon}\}$ has a weak limit or its subsequence has a weak limit $\mu$. This property need the tightness of $\{\mu_{\varepsilon}\}$. Corresponding results can be found in \cite{chen}.

\begin{lem}\label{lem:qponcompactset}
	Suppose conditions i) - ii) of Proposition \ref{prop:LDP} are true. Then there exists a bounded domain $D \supset \cup_{i \in \mathcal{L}} K_i$ with smooth boundary such that for any $i, j \in \{ 1, 2, \ldots, l \}$, we have
	\begin{equation}\label{lem:qponcompactset1}
		\tilde{V}_D (K_i, K_j) = \tilde{V} (K_i, K_j).
	\end{equation}
	
\end{lem}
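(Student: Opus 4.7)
The plan is to choose $D$ to be a sufficiently large open ball $B_0(R)$ centred at the origin, which automatically has smooth boundary and, for $R>M$, contains $\bigcup_{i\in\mathcal{L}} K_i$. The inequality $\tilde V_D(K_i, K_j) \ge \tilde V(K_i, K_j)$ is immediate from the definitions, since shrinking the ambient space from $\mathbb{R}^d$ to $\bar D$ only removes admissible trajectories. All the work is in the reverse inequality.

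To control paths that wander far from the origin, I would combine Lemma \ref{lem:lowerboundedofqp} with the growth of $U$ forced by \eqref{ash3}. Integrating \eqref{ash3} radially gives $U(y) \ge \min_{|x|=M} U(x) + \kappa(|y| - M)$ for $|y| > M$, so $U(y) \to +\infty$ as $|y| \to +\infty$. Since each $K_i$ is compact and contained in $B_0(M)$, $C_U := \sup_{x \in \bigcup_i K_i} U(x) < +\infty$. If $\varphi$ is any path admissible for $\tilde V(K_i, K_j)$ that visits a point $\varphi_{t^*}$ with $|\varphi_{t^*}| \ge R$, then splitting at $t^*$ and applying Lemma \ref{lem:lowerboundedofqp} to $\varphi|_{[T_1,t^*]}$ yields
\[
S_{T_1 T_2}(\varphi) \;\ge\; \frac{2\zeta}{\bar{\lmd}^2}\bigl(U(\varphi_{t^*}) - U(\varphi_{T_1})\bigr) \;\ge\; \frac{2\zeta}{\bar{\lmd}^2}\Bigl(\min_{|x|=M} U(x) + \kappa(R - M) - C_U\Bigr),
\]
which tends to $+\infty$ as $R \to +\infty$, uniformly over the starting point $\varphi_{T_1} \in \bigcup_i K_i$ and hence over all pairs $(i,j)$.

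I would then fix $N := 1 + \max\{\tilde V(K_i, K_j) : i,j \in \mathcal{L},\ \tilde V(K_i, K_j) < +\infty\}$ (a finite maximum over finitely many finite values) and choose $R > M$ so large that the right-hand side above exceeds $N$; set $D := B_0(R)$. For any pair with $\tilde V(K_i, K_j) < +\infty$ and any $\varepsilon \in (0,1)$, any path $\varphi$ admissible for $\tilde V(K_i, K_j)$ whose action lies within $\varepsilon$ of the infimum has total action below $N$ and therefore cannot exit $B_0(R)$; it is thus also admissible for $\tilde V_D(K_i, K_j)$, so letting $\varepsilon \downarrow 0$ yields $\tilde V_D(K_i, K_j) \le \tilde V(K_i, K_j)$. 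When $\tilde V(K_i, K_j) = +\infty$, equality follows from the trivial direction. The main obstacle is producing a single $R$ that works simultaneously for every pair $(i,j)$, but this is resolved by the uniformity of the lower bound on $U$ at infinity together with the finiteness of $\mathcal{L}$.
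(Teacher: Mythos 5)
Your proof is correct and follows essentially the same route as the paper's: both use the energy lower bound from Lemma \ref{lem:lowerboundedofqp} together with the radial growth of $U$ forced by \eqref{ash3} to show that paths making a far excursion have action exceeding $\tilde V(K_i,K_j)+1$, so near-minimizers are confined to a fixed ball. The only cosmetic differences are that you fix a single radius $R$ uniformly over the finitely many pairs at the outset (whereas the paper builds $D_{ij}$ per pair and then takes a smooth domain containing their union), and you estimate the action directly from $\varphi_{T_1}\in K_i$ using $C_U$ rather than splitting at the last crossing of $\{|x|=M\}$ as in the paper.
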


\begin{proof}
	Let us fix $i, j\in \cl L$ arbitrarily.
	\begin{trivlist}
		\item[\bf If $\tilde{V} (K_i, K_j) = + \infty$:] then for every domain $D$ we
		have $\tilde{V}_D (K_i, K_j) \geq \tilde{V} (K_i, K_j) = + \infty$. Thus,
		$\tilde{V}_D (K_i, K_j) = \tilde{V} (K_i, K_j) = + \infty$.
		
		\item[\bf If $\tilde{V} (K_i, K_j) < + \infty$:]  we claim that there exists a
		compact set $D_{i j}$ such that $\tilde{V}_{D_{i j}} (K_i, K_j) =
		\tilde{V} (K_i, K_j)$. By Corollary \ref{cor:infinitycost},   
		there exists an $N^{}_{i j} > M$ such that for any $x \in
		\{ x : | x | = M \}, y \in \{ y : | y | > N_{i j} \}$, we have $V (x, y) >
		\tilde{V} (K_i, K_j) + 1$, since i) and ii) in Proposition \ref{prop:LDP} are true.
		For any $\varphi \in \{ \varphi : \varphi \in
		C_x (\mathbb{R}^+ ; \mathbb{R}^d \backslash \cup_{s \neq i, j} K_s), x \in
		K_i, \exists \ \tilde{t} \triangleq \tilde{t} (\varphi) > 0 \text{ s.t. }  |
		\varphi_{\tilde{t}} | > N_{i j} \}$, we set $\epsilon = \sup \{ s : s <
		\tilde{t}, | \varphi_s | = M \}$, then $S_{0 \tilde{t}} (\varphi) \geq
		S_{0 \epsilon} (\varphi) + S_{\epsilon \tilde{t}} (\varphi) > \tilde{V}
		(K_i, K_j) + 1$. On the other hand, by the definition of
		$\tilde{V} (K_i, K_j)$, we know that there exists a sequence of absolutely
		continuous functions $\{ \varphi^{(n)} \}$ and a sequence of positive constants $\{ T_n
		\}$, $n \in \mathbb{N}^+$ such that $\varphi_0^{(n)} \in K_i,
		\varphi_t^{(n)} \in \mathbb{R}^d \backslash \cup_{s \neq i, j} K_s, t \in
		[0, T_n]$ and $S_{0 T_n} (\varphi^{(n)}) \leq \tilde{V} (K_i, K_j) +
		\frac{1}{n}$. Let $D_{i j} = \{ x : | x | \leq N_{i j} \}$.
		Thus, for any $n \in \mathbb{N}^+$, we have $\varphi_t^{(n)} \in D_{i j}
		\backslash \cup_{s \neq i, j} K_s, t \in [0, T_n]$, which means
		$\tilde{V}_{D_{i j}} (K_i, K_j) = \tilde{V} (K_i, K_j)$.
		Therefore, we can choose a domain $D \supset \cup_{i, j} D_{i j}$ with
		smooth boundary satisfying \eqref{lem:qponcompactset1}.
	\end{trivlist}
	Hence, we finish the proof.
	
\end{proof}

\begin{lem}\label{lem:incompaceset}
	Suppose conditions i) - ii) of Proposition \ref{prop:LDP} are true.
	Let $C \subset \mathbb{R}^d$ be a compact set and $T, \Theta$ be
	positive constants.  Then there exists a compact set $\Lambda \subset
	\mathbb{R}^d$ such that for any $\varphi \in \{ \varphi \in C_x ([0, T] ;
	\mathbb{R}^d): S_{0 T} (\varphi) \leq \Theta, x \in C \}$, we have
	$\varphi([0,T]) \subset \Lambda$.
\end{lem}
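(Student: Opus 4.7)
The plan is to reduce this statement to the uniform bound \eqref{ash8} already derived in the proof of Lemma \ref{lem:ash} a). Fix any $\varphi\in C_x([0,T];\R^d)$ with $x\in C$ and $S_{0T}(\varphi)\leq \Theta$. Since $S_{0T}(\varphi)<+\infty$, the curve $\varphi$ is absolutely continuous, and by condition ii) of Proposition \ref{prop:LDP}, $\sigma(\varphi_t)$ is invertible with $|\sigma^{-1}|$ controlled. Therefore I would set
$$h_t := \int_0^t \sigma^{-1}(\varphi_s)\bigl(\dot\varphi_s - b(\varphi_s)\bigr)\, d s,$$
so that $h\in H_0([0,T];\R^d)$, $\varphi$ solves the skeleton equation \eqref{aum1} with this control $h$, and the definition of the action functional yields $\|h\|_1^2 = 2 S_{0T}(\varphi)\leq 2\Theta$.

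Applying the key estimate \eqref{ash8} from the proof of Lemma \ref{lem:ash} a) to this particular pair $(\varphi,h)$ gives
$$|\varphi_t|\leq (M\vee|\varphi_0|) + \frac{{\bar{\lmd}}^2}{2\zeta\kappa}\|h\|_1^2 + \frac{(U^{\ast}-U_{\ast})\vee(U^{\ast 0}-U_{\ast 0})}{\kappa},\qquad t\in[0,T],$$
with the same notation for $U^{\ast}, U_{\ast}, U^{\ast 0}, U_{\ast 0}$ as in that proof. At this stage the only residual $x$-dependence sits inside $|\varphi_0|$, $U^{\ast 0}$ and $U_{\ast 0}$.

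To uniformise over $x\in C$, set $R_C := \max_{z\in C}|z|$ and $r_C := \min_{z\in C}|z|$; the annulus $A := \{z\in\R^d : r_C\leq|z|\leq R_C\}$ is compact, contains every sphere $\{|z|=|\varphi_0|\}$ with $\varphi_0 = x\in C$, and so by continuity of $U$ one obtains $U^{\ast 0} - U_{\ast 0}\leq \max_A U - \min_A U =: D_A$ uniformly in $x\in C$. Combining this with $|\varphi_0|\leq R_C$ and $\|h\|_1^2\leq 2\Theta$, I get
$$|\varphi_t|\leq (M\vee R_C) + \frac{{\bar{\lmd}}^2 \Theta}{\zeta\kappa} + \frac{(U^{\ast}-U_{\ast})\vee D_A}{\kappa}=: R,$$
independently of the choice of $\varphi$ and $x\in C$. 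Taking $\Lambda := \overline{B_0(R)}$ then yields the desired compact set.

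I do not foresee any substantive obstacle: the argument is essentially a direct reuse of \eqref{ash8}. The only point requiring mild care is that the constants appearing in \eqref{ash8} depend on $|\varphi_0|$ through $U^{\ast 0}$ and $U_{\ast 0}$, and it is precisely the compactness of $C$, together with continuity of the Lyapunov function $U$, that lets me replace those $x$-dependent constants by a single constant $D_A$.
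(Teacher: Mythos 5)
Your proof is correct, and it takes a genuinely different route from the paper's. The paper proves Lemma~\ref{lem:incompaceset} via the quasi-potential: it invokes Corollary~\ref{cor:infinitycost} (built on Lemma~\ref{lem:lowerboundedofqp}) to find $\tilde M$ with $V(x,y)>\Theta+1$ whenever $|x|=M\vee|C|$ and $|y|>\tilde M$, then argues by contradiction that any $\varphi$ starting in $C$ and reaching $\{|y|>\tilde M\}$ must first cross the sphere $\partial B_0(M\vee|C|)$, forcing $S_{0T}(\varphi)\geq V(\varphi_\epsilon,\varphi_{\tilde t})>\Theta+1$. You instead observe that a curve with $S_{0T}(\varphi)\leq\Theta<\infty$ is automatically absolutely continuous, and by invertibility of $\sigma$ it solves the skeleton equation \eqref{aum1} with the control $h_t=\int_0^t\sigma^{-1}(\varphi_s)(\dot\varphi_s-b(\varphi_s))\,ds$, whose Cameron--Martin norm is pinned by the action: $\|h\|_1^2=2S_{0T}(\varphi)\leq 2\Theta$. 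That lets you recycle the a priori bound \eqref{ash8} verbatim; the only additional step is uniformizing the initial-point-dependent constants $U^{\ast0},U_{\ast0}$ over $x\in C$ via continuity of $U$ on the compact annulus $A=\{r_C\leq|z|\leq R_C\}$, which you handle correctly. Conceptually the two proofs rest on the same Lyapunov hypothesis (condition i) of Proposition~\ref{prop:LDP}), but yours is a more direct reduction to estimates already in hand from Lemma~\ref{lem:ash}~a), bypassing the quasi-potential machinery, while the paper's fits the cost-functional viewpoint it uses throughout Section~4. Both yield an explicit $\Lambda$; yours has the minor advantage of giving a fully explicit radius $R$ in terms of $M$, $R_C$, $\Theta$, $\bar{\lambda}$, $\zeta$, $\kappa$, and the oscillation of $U$ on two compact sets.
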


\begin{proof}
	Let $| C | = \sup \{ | x | : x \in C \}$. By Corollary \ref{cor:infinitycost}, for any $x \in \partial B_0(M \vee | C |) $, there exists a constant $\tilde{M}$ sufficiently large such that
	for any $y$ with $| y | > \tilde{M}$, we have
	\[ V (x, y) \geq \frac{2 \zeta}{{\bar{\lmd}}^2} (U (y) - U (x)) > \Theta +
	1. \]
	For any $\varphi \in \{ \varphi \in C_x ([0, T] ; \mathbb{R}^d): x
	\in C, \exists \ \tilde{t} \in [0, T]\ \text{ s.t. }\ | \varphi_{\tilde{t}} | > \tilde{M} \}$, we set $\epsilon (\varphi) = \inf \{ s : s < T, |
	\varphi_s |=M \vee | C | \}$, then
	\[ S_{0 T} (\varphi) \geq S_{0 \epsilon} (\varphi) + S_{\epsilon \tilde{t}}
	(\varphi) \geq \Theta + 1. \]
	Thus, we have finished the proof by $\Lambda =\overline{B_0(\tilde M)}$.
	
\end{proof}

\begin{lem}\label{lem:uppertimeestimate}
	Suppose conditions i) - iii) of Proposition \ref{prop:LDP} and 1)
	of Assumption \ref{mainassumption} are true.
	Let $O \supset \cup_{i \in \mathcal{L}}K_i$ be a bounded open
	set and $C \subset \mathbb{R}^d \backslash O$ be a compact set. Then for any
	$\alpha > 0$, there exist two constants $T, \varepsilon_0 > 0$ such
	that we have
	\[ P^{\varepsilon}_x (\tau^{\varepsilon}_O > T) \leq \exp (- \varepsilon^{-
		2} \alpha) ,\]
	for any $\varepsilon \in (0, \varepsilon_0)$ and $x \in C$.
\end{lem}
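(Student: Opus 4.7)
The plan is to choose $T$ large enough so that any path in $C_x([0,T];\R^d)$ which starts in $C$ and remains in $O^c$ throughout $[0,T]$ has action bounded below by $\alpha+2\gamma$, for some small $\gamma>0$; the required exponential estimate will then follow from the upper-bound part of the uniform LDP in Proposition~\ref{prop:LDP}.

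Fix $\gamma>0$ and set $\Theta=\alpha+3\gamma$. Applying Lemma~\ref{lem:incompaceset} (whose proof yields a compact set $\Lambda\supset C$ depending on $C$ and $\Theta$ but not on the time horizon, since the key estimate comes from Corollary~\ref{cor:infinitycost}) produces $\Lambda$ such that every $\varphi\in C_x([0,T'];\R^d)$ starting in $C$ with $S_{0T'}(\varphi)\le\Theta$ stays inside $\Lambda$; equivalently, a path leaving $\Lambda$ must have action $>\Theta$. On the compact set $\Lambda\cap O^c\subset\R^d\setminus\bigcup_{i\in\mathcal{L}}K_i$, Assumption~\ref{mainassumption}(2) forces $\nabla U\neq 0$, so by continuity $|\nabla U|\geq\eta$ for some $\eta>0$; combined with \eqref{ash2} this gives $\nabla U\cdot b\leq -\zeta\eta^2=:-c<0$ on $\Lambda\cap O^c$.

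For an absolutely continuous $\psi\in C_x([0,T];\R^d)$ starting at $x\in C$ and remaining inside $\Lambda\cap O^c$ for all $t\in[0,T]$,
\[
U(\psi_T)-U(\psi_0)=\int_0^T\nabla U(\psi_s)\cdot\dot\psi_s\,ds\leq -cT+\int_0^T|\nabla U(\psi_s)|\,|\dot\psi_s-b(\psi_s)|\,ds\leq -cT+C_U\sqrt{2\bar\lambda^2\,T\,S_{0T}(\psi)},
\]
where $C_U:=\sup_\Lambda|\nabla U|<\infty$ and the last step uses Cauchy--Schwarz together with the bound $\int_0^T|\dot\psi_s-b(\psi_s)|^2\,ds\leq 2\bar\lambda^2 S_{0T}(\psi)$, which follows from condition~ii) of Proposition~\ref{prop:LDP} (as in the proof of Lemma~\ref{lem:lowerboundedofqp}). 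Since $U\geq 0$ and $U(\psi_0)\leq U_M:=\max_C U$, rearranging yields $S_{0T}(\psi)\geq(cT-U_M)_+^2/(2\bar\lambda^2 C_U^2 T)$, which tends to $+\infty$ as $T\to+\infty$. Fixing $T$ large enough for this lower bound to exceed $\alpha+3\gamma$, and invoking the dichotomy ``either the path stays in $\Lambda$ (so this Lyapunov bound applies) or it leaves $\Lambda$ (and has action $>\Theta=\alpha+3\gamma$)'', one concludes
\[
\inf_{x\in C}\ \inf_{\psi\in F_x} S_{0T}(\psi)\geq\alpha+3\gamma,\qquad\text{where}\ F_x:=\{\psi\in C_x([0,T];\R^d):\psi_t\in O^c\ \forall t\in[0,T]\}.
\]

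The closed set $F_x$ is therefore disjoint from the compact level set $\Phi_x(\alpha+2\gamma)$, hence separated from it by some $\delta_x>0$; a standard compactness argument based once more on the $T$-independent compact set furnished by Lemma~\ref{lem:incompaceset} yields $\delta:=\inf_{x\in C}\delta_x>0$. Taking any $s_0>\alpha+2\gamma$ and applying \eqref{ldpu} with $s=\alpha+2\gamma$ then gives, for all $\varepsilon$ below the uniform threshold $\varepsilon_0(\gamma,\delta,T,C,s_0)$ and all $x\in C$,
\[
P_x^\varepsilon(\tau_O^\varepsilon>T)=P_x^\varepsilon(X^\varepsilon\in F_x)\leq P_x^\varepsilon\bigl(\rho_{0T}(X^\varepsilon,\Phi_x(\alpha+2\gamma))\geq\delta\bigr)\leq\exp\bigl(-\varepsilon^{-2}(\alpha+\gamma)\bigr)\leq\exp(-\varepsilon^{-2}\alpha).
\]
The main obstacle is the linear-in-$T$ growth of the action along confined paths derived above, which rests crucially on Assumption~\ref{mainassumption}(2) through the uniform positivity of $|\nabla U|$ on $\Lambda\cap O^c$; the secondary technical point of passing to $x$-uniformity of $\delta$ is handled by the spatial-compactness conclusion of Lemma~\ref{lem:incompaceset}.
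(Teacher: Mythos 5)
Your argument is cleanly organised and the quantitative Lyapunov estimate is correct as far as it goes, but it proves the lemma under hypotheses strictly stronger than those stated. The lemma assumes only conditions i)--iii) of Proposition~\ref{prop:LDP} together with condition~1) of Assumption~\ref{mainassumption}; your proof, however, invokes condition~2) of Assumption~\ref{mainassumption} to deduce $\nabla U\neq 0$ on $\Lambda\cap O^c$, and this is the linchpin of your whole estimate: without the uniform positivity $|\nabla U|\geq\eta>0$ the constant $c=\zeta\eta^2$ can be zero and your bound $S_{0T}(\psi)\geq(cT-U_M)_+^2/(2\bar\lambda^2C_U^2T)$ carries no information. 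Condition~i) of Proposition~\ref{prop:LDP} only gives $\nabla U\cdot b\leq-\zeta|\nabla U|^2$, which is useless wherever $\nabla U$ vanishes. So, as a self‑contained proof of the lemma as stated, there is a genuine gap; you are proving a weaker statement with an extra hypothesis.

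The paper avoids this by exploiting condition~1) of Assumption~\ref{mainassumption} in a different way. Since every $\omega$-limit set lies in $\cup_i K_i\subset O$, the deterministic first hitting time $\tau_{O,x}$ is finite for every $x\in C_1\setminus O$ (where $C_1$ is a large compact set chosen via Corollary~\ref{cor:infinitycost}); upper semi‑continuity of $x\mapsto\tau_{O,x}$ then gives a finite supremum $T_0$, and a compactness/lower‑semicontinuity argument (using Lemma~\ref{lem:incompaceset}) shows every path confined to $O^c$ over any window of length $T_1=T_0+1$ must pay action at least some fixed $\theta>0$. Concatenating $\lfloor T/T_1\rfloor$ such blocks yields the needed divergence, with no nondegeneracy of $\nabla U$ required. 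That is the idea you are missing: the linear‑in‑$T$ growth of the action can be extracted purely from the fact that the flow must reach $O$ in bounded time, rather than from a pointwise drift bound via $|\nabla U|$. A secondary point: the paper applies the closed‑set upper bound \eqref{ldpc} directly to $\{\varphi\in C_x([0,T_2];\mathbb{R}^d\setminus O)\}$, whereas you route through \eqref{ldpu} and then need a uniform separation $\delta=\inf_x\delta_x>0$; your ``standard compactness argument'' for this is plausible but would need to be spelled out (in particular you must control $\bigcup_{x\in C}\Phi_x(\alpha+2\gamma)$, not just each level set individually), and the \eqref{ldpc} route is simpler.
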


\begin{proof}
	Let $\alpha>0$ be fixed arbitrarily.
	
	By Corollary \ref{cor:infinitycost},
	there exists a compact set $C_1 \supset (O \cup C)$ such that for any
	$x \in C$ and $y \in C_1^c$, we have $V (x, y) > \alpha + 1$.

	By 1) of Assumption \ref{mainassumption}, we have 
	$$\tau_{O,x} < + \infty, \quad \forall x\in (C_1\setminus O).$$
	It is easy to
	prove that $\{x\in (C_1\setminus O):\tau_{O,x} \geq a \}$ is a close set for any $a > 0$, since the solution of 
	\eqref{1.1} is continuous with respect to the initial point. Therefore,
	$\tau_{O,x}$ is upper semi-continuous. Thus, $\tau_{O,x}$ can get
	the finite maximum denoted by $T_0$ in $C_1 \backslash O$. 
	
	Let us set $T_1
	= T_0 + 1$. We claim that for any $\varphi \in \{ \varphi \in C_x ([0, T_1] ;
	\mathbb{R}^d \backslash O): x \in C_1 \backslash O \}$, there exists a constant $\theta >
	0$ such that $S_{0 T_1} (\varphi) > \theta$. Otherwise, there exists a
	sequence of $\{ \varphi_n \} \subset \{ \varphi \in C_x ([0, T_1]
	; \mathbb{R}^d \backslash O): x \in C_1\setminus O \}$ such that $S_{0 T_1} (\varphi) <
	\frac{1}{n}$, $n \in \mathbb{N}^+$. By Lemma \ref{lem:incompaceset}, 
	there exists a compact set $\Lambda \subset \mathbb{R}^d \backslash O$ such
	that $ \varphi_n  \subset \Lambda$ for all $n$. Thus, there exists a subsequence of
	$\{ \varphi_n \}$, which converges to some $\varphi^{\ast} \in \{\varphi
	\in C_x ([0, T_1] ; \mathbb{R}^d \backslash O): x \in C_1\setminus O \}$. Therefore, we have $S_{0 T_1} (\varphi^{\ast})
	= 0$, since $S_{0 T_1} (\cdot)$ is lower semi-continuous. Hence, we have $\varphi_t^{\ast} = X_t (\varphi_0^{\ast}) \in  \Lambda
	\subset \mathbb{R}^d \backslash O, \forall t \in [0, T_1]$, which is
	contradict to the definition of $T_0$.
	
	Therefore, for a continuous trajectory $\varphi$ that starts from $C$ and spends time $T$ more than $ 
	T_1$ in $\mathbb{R}^d \backslash O$, we have $S_{0 T} (\varphi) >\theta$.
	In general, for a continuous $\varphi$ that starts from $C$ and spends time $T $ more than $
	T_1$ in $C_1 \backslash O$, we have
	\[ S_{0 T} (\varphi) > \left[ \frac{T}{T_1} \right] \theta > \left(
	\frac{T}{T_1} - 1 \right) \theta, \]
	and $S_{0 T} (\varphi) > \alpha
	+ 1$, if $\varphi$ reaches $C_1^c$.
	
	Thus, there exists a $T_2 > 0$ such that for any $\varphi$
	spending time $T_2$ in $\mathbb{R}^d \backslash O$, we have $S_{0 T_2}
	(\varphi) > \alpha + 1$ anyhow.
	
	By \eqref{ldpc}, because of the closeness of $\{ \varphi \in C_x ([0, T_2] ;
	\mathbb{R}^d \backslash O) \}$ in $C_x ([0, T_2] ;
	\mathbb{R}^d)$, there exists a $\varepsilon_0 > 0$ such
	that we have
	\[ P^{\varepsilon}_x (\tau^{\varepsilon}_O > T_2) \leq \exp (-
	\varepsilon^{- 2} \alpha), \]
	for any $x \in C$ and $\varepsilon \in (0, \varepsilon_0)$.
	
\end{proof}

Let $K_i, i \in \mathcal{L}$ be equivalent sets as in 1) of Assumption \ref{mainassumption}. We set $\delta_1 = \frac{1}{8} \min_{i, j} \tmop{dist} (K_i, K_j)$. For any
$\rho_0 \in (0, \delta_1)$, $\rho_1 \in (0, \rho_0)$ and $\rho_2 \in (0,
\rho_1)$, let us choose open sets $g_i, G_i$ with smooth boundaries
satisfying
\[ \begin{array}{l}
	K_i \subset g_{_i} \subset  (K_i)_{\rho_2} \subset G_i
	\subset (K_i)_{\rho_1}.
\end{array} \]
We denote
\[ g = \bigcup_{i = 1}^l g_i, \quad G = \bigcup_{i = 1}^l
G_i . \]
For any $\varepsilon > 0$, we consider the following two sequences of
stopping times related to $X^{\varepsilon}_t$:
\[ \tau^{\varepsilon}_0 = 0,\quad \sigma^{\varepsilon}_n = \inf \{ t : t \geq
\tau^{\varepsilon}_{n - 1}, X^{\varepsilon}_t \in \partial G \},\quad
\tau^{\varepsilon}_n = \inf \{ t : t \geq \sigma^{\varepsilon}_n,
X^{\varepsilon}_t \in \partial g \}, \quad n \in \mathbb{N}^+ . \]
In order to define the Markov chain
$Z^{\varepsilon}_n = X^{\varepsilon}_{\tau^{\varepsilon}_n}$
appropriately, we need the following lemma. Here, $ \{Z^{\varepsilon}_n\}$ are used to construct $\{\mu_\varepsilon\}$.

\begin{lem}\label{lem:recurrent}
	Under conditions i) - iii) of Proposition \ref{prop:LDP}, we have $P^{\varepsilon}_x (\tau^{\varepsilon}_n <
	+ \infty) = 1$, for any $\varepsilon \in (0, \varepsilon_1)$, $x \in \mathbb{R}^d$ and ${n \in \mathbb{N}^+}$. 
\end{lem}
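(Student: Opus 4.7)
The plan is to prove the lemma by induction on $n$, with the base case $\tau^\varepsilon_0 = 0 < \infty$ trivial. For the inductive step, I would apply the strong Markov property at $\tau^\varepsilon_{n-1}$ to reduce the claim to two assertions about excursions from an arbitrary initial point: (a) from any $x \in \mathbb{R}^d$, the first hitting time of $\partial G$ is a.s.\ finite, which gives $\sigma^\varepsilon_n < \infty$; and (b) from any $y \in \partial G$, the first hitting time of $\partial g$ is a.s.\ finite, which then gives $\tau^\varepsilon_n < \infty$. Both fit the template ``reach a bounded target set with smooth boundary from an arbitrary starting point'', so I would treat them in parallel.

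Each template case will combine a recurrence estimate with a reachability estimate. For recurrence, I would apply It\^o's formula to $\tilde U(X^\varepsilon_{\cdot})$ and use \eqref{ash4}: the drift of $\tilde U(X^\varepsilon)$ is at most $-\chi$ whenever $X^\varepsilon \in B_0^c(M)$. Coupled with the non-explosion from Lemma \ref{lem:ash} a) and a standard localization (stop at $|X^\varepsilon| = R$, let $R \uparrow \infty$), together with $\tilde U \ge 0$, this yields
\[
\mathbb{E}^\varepsilon_x \bigl[\tau^\varepsilon_{\overline{B_0(M)}}\bigr] \leq \tilde U(x)/\chi < \infty,
\]
so the process almost surely enters $\overline{B_0(M)}$ in finite time. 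For reachability, I would fix a compact set $F \supset \overline{B_0(M)} \cup \overline{G}$ and a time horizon $T > 0$. Using the explicit construction in Lemma \ref{lem:qpcontinuous} together with the invertibility of $\sigma$ in condition ii), for any $x \in F$ one can exhibit a smooth $\varphi \in H_x([0,T]; \mathbb{R}^d)$ of action $S_{0T}(\varphi)$ bounded uniformly in $x \in F$ whose trajectory crosses the target set before time $T$. The uniform LDP lower bound \eqref{ldpl} over $F$ then supplies constants $p_0 > 0$ and $\varepsilon_0 > 0$ such that for any $x \in F$ and $\varepsilon \in (0, \varepsilon_0)$,
\[
P^\varepsilon_x(\text{target hit before time } T) \geq p_0 .
\]

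Iterating the strong Markov property at successive returns of $X^\varepsilon$ to $\overline{B_0(M)}$, the number of failed excursions before the target is hit is stochastically dominated by a geometric random variable with success probability $p_0$, hence a.s.\ finite. Applied with target $\partial G$ this proves (a); applied with target $\partial g$ and initial point $y \in \partial G \subset F$ this proves (b), closing the induction. The main technical obstacle will be pinning down the uniform reachability constant $p_0$: one needs to verify both the uniform bound on the action of the reference path (which follows from the boundedness of $(\sigma\sigma^T)^{-1}$ and $b$ on the compact set $F$ together with the linear interpolation construction of Lemma \ref{lem:qpcontinuous}) and the uniformity in the initial point of the lower bound in Proposition \ref{prop:LDP}; once these are in place, the combination with the Lyapunov recurrence is routine.
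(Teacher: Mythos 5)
Your overall architecture matches the paper's: the same induction on $n$, the same strong Markov reduction to the two assertions (a) reachability of $\partial G$ from arbitrary $x$ and (b) reachability of $\partial g$ from $\partial G$, and the same Lyapunov-function recurrence idea built on condition iii) (your ``apply It\^o to $\tilde U$ and localise'' is precisely the content of Theorem~3.9 of \cite{km}, which the paper cites). Where you genuinely diverge is in establishing (a) and (b) once recurrence to $\overline{B_0(M)}$ is in hand: the paper invokes Lemma~4.1 of \cite{km}, a positivity-of-hitting-probabilities result that rests on the nondegeneracy of $\sigma$ alone, whereas you bring in the uniform LDP lower bound \eqref{ldpl} to manufacture a per-cycle success probability and then run a geometric-trials argument.

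That substitution creates a real, if modest, gap. The bound \eqref{ldpl} is only furnished for $\varepsilon \in (0, \varepsilon_0)$ with $\varepsilon_0 = \varepsilon_0(\gamma,\delta,T,F,s_0)$, and nothing in Proposition~\ref{prop:LDP} guarantees $\varepsilon_0 \geq \varepsilon_1$. So your argument proves the statement only on $(0, \varepsilon_0 \wedge \varepsilon_1)$, whereas the lemma claims the full range $(0,\varepsilon_1)$; the paper's Khasminskii-based route does not suffer this restriction because Lemma~4.1 of \cite{km} needs only uniform ellipticity and works for every fixed noise level. (For fixed $\varepsilon$ away from zero one could repair this directly via the support theorem or a control argument, but you would have to say so.) A secondary point worth pinning down: the tube estimate $P^\varepsilon_x(\rho_{0T}(X^\varepsilon_\cdot,\varphi)<\delta)$ does not by itself force $X^\varepsilon$ to hit the codimension-one set $\partial G$ (resp.\ $\partial g$); you need to steer the reference path $\varphi$ a distance $>\delta$ past the boundary, so that every trajectory in the $\delta$-tube crosses it by continuity. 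This is standard and easily added, but as written the reachability claim is not quite justified.
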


\begin{proof}
	We prove this lemma by two statements: a). $\forall \ x \in
	\mathbb{R}^d, P^{\varepsilon}_x (\sigma^{\varepsilon}_1 < + \infty) = 1$ and
	b). $\forall \ x \in \partial G, P^{\varepsilon}_x (\tau^{\varepsilon}_1 < +
	\infty) = 1$ and induction on n.
	
	For the case $n=1$, by a), b) and the strong
	Markov property of $X^{\varepsilon}_t$, we have 
	\begin{eqnarray}\label{recurrent1}
		P^{\varepsilon}_x (\tau^{\varepsilon}_1 < + \infty) & = &
		E^{\varepsilon}_x \left(1_{\{ \sigma^{\varepsilon}_1 < + \infty \}} 1_{\{
			\tau^{\varepsilon}_1 - \sigma^{\varepsilon}_1 < + \infty \}}\right) \nonumber\\
		& = & E^{\varepsilon}_x \left(E^{\varepsilon}_x \left(1_{\{ \sigma^{\varepsilon}_1
			< + \infty \}} 1_{\{ \tau^{\varepsilon}_1 - \sigma^{\varepsilon}_1 < +
			\infty \}} | \mathcal{F}_{\sigma^{\varepsilon}_1} \nobracket\right)\right) \nonumber\\
		& = & E^{\varepsilon}_x \left( 1_{\{ \sigma^{\varepsilon}_1 < + \infty
			\}} P_{X_{\sigma^{\varepsilon}_1}^{\varepsilon}}^{\varepsilon}
		(\tau^{\varepsilon}_1 < + \infty) \right) \nonumber\\
		& = & 1,
	\end{eqnarray}
	for any $x \in \mathbb{R}^d$.
	
	For $n \geq 2$, suppose that $P^{\varepsilon}_x (\tau^{\varepsilon}_{n - 1} < + \infty) =
	1$, for any $x \in \mathbb{R}^d$. By the strong Markov property, we have
	\begin{eqnarray*}
		P^{\varepsilon}_x (\tau^{\varepsilon}_n < + \infty) & = &
		E^{\varepsilon}_x (1_{\{ \tau^{\varepsilon}_{n - 1} < + \infty \}} 1_{\{
			\tau^{\varepsilon}_n - \tau^{\varepsilon}_{n - 1} < + \infty \}})
		\nonumber\\
		& = & E^{\varepsilon}_x \left( 1_{\{ \tau^{\varepsilon}_{n - 1} < +
			\infty \}} P_{X_{\tau^{\varepsilon}_{n - 1}}^{\varepsilon}}^{\varepsilon}
		(\tau^{\varepsilon}_1 < + \infty) \right).  
	\end{eqnarray*}
	By the induction
	hypothesis and \eqref{recurrent1}, we have
	\[ P^{\varepsilon}_x (\tau^{\varepsilon}_n < + \infty) = P^{\varepsilon}_x
	(\tau^{\varepsilon}_{n - 1} < + \infty) = 1, \]
	since $X_{\tau^{\varepsilon}_{n - 1}}^{\varepsilon} \in \partial g, a.s. \
	P^{\varepsilon}_x$ is well-defined.
	
	Now we only need to prove a) and b). According to Theorem 3.9 in
	{\cite{km}} and iii) of Proposition \ref{prop:LDP}, by taking $V (s, x)$ in {\cite{km}}
	as $\tilde{U} (x)$ and $U_1$ in {\cite{km}} as $\{ x : | x | < M \}$, we
	know that the process $X^{\varepsilon}_t$ is recurrent with respect to the
	bounded open set $F \triangleq \{ x : | x | < M \}$, for any $\varepsilon \in
	(0, \varepsilon_1)$. Namely, we have $P^{\varepsilon}_x (\tau_{x,F}^{\varepsilon} < +
	\infty) = 1$, for any $x \in B_0^c(M)$ and $\varepsilon
	\in (0, \varepsilon_1)$. Furthermore, Lemma 4.1 in {\cite{km}} tells us
	a) and b) are true. Hence, we finish the proof of this lemma.
	
\end{proof}

The following two lemmata about compact sets from {\cite{fw}} will be used in
the proof of Proposition \ref{prop:transpb}. 
\begin{lem}\label{lem:fw1}
	(\cite{fw}) Let $C$ be a compact set with a smooth boundary.
	For any $\Theta, \gamma > 0$, $\exists \ \bar{\delta} > 0$ such that for any
	$\delta \in (0, \bar{\delta}]$ and any $\varphi \in C ([0, T] ; C)$
	satisfying $T + S_{0 T} (\varphi) \leq \Theta$, there exists a $\bar{\varphi} \in
	C ([0, T] ; C_{- \delta})$ satisfying
	\[ \left\{ \begin{array}{l}
		\bar{\varphi}_0 = \varphi_0, \bar{\varphi}_T = \varphi_T,
		\hspace{3.5em} \tmop{for}\ \varphi_0, \varphi_T \in C_{- \delta}\\
		\bar{\varphi}_0 = (\varphi_0)_{- \delta}, \bar{\varphi}_T =
		(\varphi_T)_{- \delta}, \tmop{otherwise},
	\end{array} \right. \]
	and $S_{0 T} (\bar{\varphi}) \leq S_{0 T} (\varphi) + \gamma$.
\end{lem}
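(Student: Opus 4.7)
The plan is to build $\bar\varphi$ as an inward perturbation of $\varphi$ of order $\delta$, using the smoothness of $\partial C$, and then to correct the two endpoints on short time windows. By smoothness of $\partial C$, the signed distance $d(\cdot,\partial C)$ and the nearest-point projection to $\partial C$ are smooth in a tubular neighbourhood of $\partial C$. Using these I would construct a $C^1$ map $r_\delta : C \to C_{-\delta}$ with $r_\delta = \id$ on $C_{-2\delta}$, $r_\delta(x) = x_{-\delta}$ for $x \in \partial C$, $\|r_\delta-\id\|_{C^0(C)} \leq C\delta$, and $\|Dr_\delta\|$ uniformly bounded in $\delta$. Setting $\tilde\varphi_t = r_\delta(\varphi_t)$ yields $\tilde\varphi \in C([0,T]; C_{-\delta})$ with $\dot{\tilde\varphi}_t = Dr_\delta(\varphi_t)\dot\varphi_t$ almost everywhere.

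To compare $S_{0T}(\tilde\varphi)$ with $S_{0T}(\varphi)$, use that on the compact set $C$ both $b$ and $\sigma^{-1}$ (well-defined by the eigenvalue bound in ii) of Proposition \ref{prop:LDP}) are bounded and uniformly continuous with common modulus $\omega$. Writing
\begin{equation*}
\sigma^{-1}(\tilde\varphi_t)\bigl(\dot{\tilde\varphi}_t - b(\tilde\varphi_t)\bigr) = \sigma^{-1}(\varphi_t)\bigl(\dot\varphi_t - b(\varphi_t)\bigr) + R_t,
\end{equation*}
with $|R_t| \leq C(\delta + \omega(\delta))(1 + |\dot\varphi_t|)$ on the shell $C \setminus C_{-2\delta}$ and $R_t \equiv 0$ elsewhere, and applying $|A+R|^2 \leq (1+\epsilon)|A|^2 + (1+\epsilon^{-1})|R|^2$, I get
\begin{equation*}
S_{0T}(\tilde\varphi) \leq (1+\epsilon)\,S_{0T}(\varphi) + C_\epsilon (\delta + \omega(\delta))^2 \Bigl(T + \int_0^T |\dot\varphi_t|^2 \, dt\Bigr).
\end{equation*}
The bound $\int_0^T |\dot\varphi_t|^2 \, dt \leq K\Theta$ follows from $S_{0T}(\varphi)+T \leq \Theta$ together with the upper eigenvalue bound on $(\sigma\sigma^T)^{-1}$ and boundedness of $b$ on $C$. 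Choosing $\epsilon$ small in terms of $\gamma,\Theta$ and then $\bar\delta$ small yields $S_{0T}(\tilde\varphi) \leq S_{0T}(\varphi) + \gamma/2$, uniformly over the admissible class.

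For the endpoint correction I would time-interpolate $r_\delta$ on two short windows $[0,\tau]$ and $[T-\tau,T]$ with $\tau=\delta$. On $[0,\tau]$, set $\bar\varphi_t = \theta(t/\tau)\,r_\delta(\varphi_t) + (1-\theta(t/\tau))\,\psi(\varphi_t)$, where $\theta$ is a smooth cutoff with $\theta(0)=0$, $\theta(1)=1$, and $\psi(x)=x$ for $x\in C_{-\delta}$ and $\psi(x)=x_{-\delta}$ otherwise, so that the prescribed initial value is attained. The extra velocity term coming from $\partial_t\theta$ has size $O(\delta/\tau)$, contributing at most $O(\delta^2/\tau)=O(\delta)$ to the action over the window; combined with the bulk cost on $[0,\tau]$ this is below $\gamma/4$. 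Treating $[T-\tau,T]$ symmetrically and pasting $\tilde\varphi$ on $[\tau,T-\tau]$ finishes the construction.

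The main obstacle is the action estimate in the transition shell, where $Dr_\delta \neq I$ so the pointwise Lagrangian may roughly double; only after integrating and invoking the $L^2$ bound on $\dot\varphi$ derived from $S_{0T}(\varphi)+T\leq\Theta$ does the perturbation become small in $L^1$. Careful tracking of $\delta$-dependencies is needed so that $\bar\delta$ depends only on $\Theta$ and $\gamma$ and not on the individual path $\varphi$.
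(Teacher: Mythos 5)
The paper itself does not prove this lemma: it is quoted directly from Freidlin--Wentzell's book \cite{fw}, so there is no argument in the paper to compare yours with. Assessing your proposal on its own terms, there is a genuine gap in the bulk estimate.

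You take $r_\delta=\id$ on $C_{-2\delta}$ and $r_\delta(x)=x_{-\delta}$ on $\partial C$, so $r_\delta$ must change from the identity to a shift of size $\delta$ across a transition shell of width only $\delta$. In the normal direction the derivative therefore drops from $1$ to roughly $1/2$; thus $\|Dr_\delta - I\|$ is of order one on $C\setminus C_{-2\delta}$, not of order $\delta$, and your own earlier line ``$\|Dr_\delta\|$ uniformly bounded in $\delta$'' already concedes this. Consequently the claimed remainder bound
\begin{equation*}
	|R_t|\leq C\,(\delta+\omega(\delta))\,(1+|\dot\varphi_t|)
\end{equation*}
is wrong: the dominant contribution is $\sigma^{-1}(\tilde\varphi_t)\bigl(Dr_\delta(\varphi_t)-I\bigr)\dot\varphi_t$, which on the shell is only bounded by $C|\dot\varphi_t|$, with no small prefactor. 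The quantity you invoke to rescue the situation, the $L^2$ bound on $\dot\varphi$ coming from $T+S_{0T}(\varphi)\leq\Theta$, only gives $\int_0^T|\dot\varphi_t|^2\,dt\leq K\Theta$; nothing forces the portion of this integral over $\{t:\varphi_t\in C\setminus C_{-2\delta}\}$ to tend to zero as $\delta\downarrow 0$ (a trajectory that oscillates inside the thin shell with speeds consistent with total action $\Theta$ lands all of its $L^2$ mass there). So after using $|A+R|^2\leq(1+\epsilon)|A|^2+(1+\epsilon^{-1})|R|^2$ your error term is still $O(\Theta)$, not $o(1)$, and no choice of $\epsilon$ and $\bar\delta$ makes it $\leq\gamma/2$. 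Your closing remark recognizes the tension (``the pointwise Lagrangian may roughly double'') but misdiagnoses it as repairable by integration; it is not, with this choice of $r_\delta$. The same defect infects the endpoint windows, since there $\theta\,Dr_\delta+(1-\theta)D\psi - I$ is likewise $O(1)$ on the shell.

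The natural repair is to decouple the width of the transition shell from $\delta$: fix $\eta>0$ once (depending only on $\Theta,\gamma$ and the geometry of $C$), construct $r_\delta$ with $r_\delta=\id$ on $C_{-\eta}$ and displacement $\|r_\delta-\id\|_{C^0}\leq C\delta$, so that $\|Dr_\delta-I\|_{C^0}=O(\delta/\eta)\to 0$. Then your bulk estimate goes through. The price is that for $\varphi_0\in C_{-\delta}\setminus C_{-\eta}$ one now has $r_\delta(\varphi_0)\neq\varphi_0$, so the endpoint correction is needed in more cases; and your time-interpolation over a window of length $\tau=\delta$ is not obviously adequate either, because the piece of the action that $\tilde\varphi$ already carries on $[0,\tau]$ need not be small. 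A safer endpoint device is to add a globally defined, slowly varying drift $\alpha_t$ with $\alpha_0=\varphi_0-\tilde\varphi_0$, $\alpha_T=\varphi_T-\tilde\varphi_T$, $|\alpha|\leq C\delta$, $|\dot\alpha|\leq C\delta/T$, and set $\bar\varphi_t=\tilde\varphi_t+\alpha_t$ (possibly after verifying it stays in $C_{-\delta'}$ for a slightly smaller $\delta'$), rather than squeezing a correction into an $O(\delta)$ time window. Finally, note that the paper's endpoint condition is only that $\bar\varphi_0=\varphi_0$ when $\varphi_0\in C_{-\delta}$, and with the fixed-$\eta$ shell this is automatic for $\varphi_0\in C_{-\eta}$, so the correction only needs to treat initial points in the shell anyway. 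The idea of constructing $\bar\varphi$ by a smooth inward retraction and separately fixing the endpoints is the right one, but the $2\delta$-wide shell makes the key derivative estimate false.
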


\begin{lem}\label{lem:fw2}
	(\cite{fw}) Let C be a compact subset in $\mathbb{R}^d$.
	Let $K \subset C$ be the max equivalent set which contains $K$. Then for any
	$\delta, \gamma > 0$ and $x, y \in K$, there exists a $ T \in (0, + \infty)$ and a curve $\varphi \in C
	([0, T] ; K_{\delta})$ satisfying $\varphi_0 = x$,
	$\varphi_T = y$ and $S_{0 T} (\varphi) \leq \gamma$.
\end{lem}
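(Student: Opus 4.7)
The plan is to construct $\varphi$ by chaining small-action pieces, combining the local interpolation from Lemma \ref{lem:qpcontinuous} with zero-action segments of the deterministic flow \eqref{1.1}, and exploiting the equivalence $V(x,y)=0$ on $K$.

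First I would extract a local tool from Lemma \ref{lem:qpcontinuous}. Its proof constructs the straight segment $\varphi_t = p + (t/|p-q|)(q-p)$, which lies inside any convex compact set containing $[p,q]$. Taking this set to be $\overline{B(p,\delta/2)} \subset K_\delta$ with $p \in K$, I obtain, for every $q$ with $|p-q| < \delta/2$, a curve in $K_\delta$ from $p$ to $q$ of action at most $L_0 |p-q|$, where $L_0 = L_0(\delta)$ depends only on bounds of $b$, $\sigma$ and $\sigma^{-1}$ on $\overline{K_\delta}$. This gives a local device for moving a short distance inside $K_\delta$ at arbitrarily small cost.

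Next, since $K$ is the maximal equivalent set in $C$, one has $V(x,y) = 0$, so by the definition of the quasi-potential there exists, for any $\eta > 0$, a curve $\psi$ from $x$ to $y$ with $S(\psi) < \eta$. A priori $\psi$ need not lie inside $K_\delta$, and each maximal excursion of $\psi$ outside $K_\delta$ begins and ends on $\partial K_\delta$. I would replace each such excursion by a concatenation of zero-action deterministic segments of \eqref{1.1} (which remain near $\cup_i K_i \subset K_\delta$ thanks to condition 1) of Assumption \ref{mainassumption}, since every $\omega$-limit of \eqref{1.1} sits inside one of the $K_i$) together with two short linear correctors produced by the first step, each of length less than $\delta/2$, so that the entire replacement lies in $K_\delta$.

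The main obstacle is quantifying the cost of this splicing. Using Lemma \ref{lem:incompaceset} I would confine $\psi$ to a compact set depending only on $\eta$, which yields a uniform bound on the number of excursions; each excursion's replacement costs at most $O(L_0 \delta)$ from its two linear correctors. Choosing $\eta$ first small enough to dominate the in-$K_\delta$ portion of the action of $\psi$, and then the correctors short enough that the cumulative splicing cost is below $\gamma/2$, the resulting curve $\varphi \in C([0,T]; K_\delta)$ satisfies $\varphi_0 = x$, $\varphi_T = y$ and $S_{0T}(\varphi) < \gamma$, completing the proof.
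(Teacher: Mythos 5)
The paper does not actually prove this lemma; it cites it directly from Freidlin--Wentzell, so there is no in-paper proof to match against. Evaluating your argument on its own terms, the surgery you propose does not go through. The most serious problem is the claim that confining $\psi$ to a compact set via Lemma~\ref{lem:incompaceset} ``yields a uniform bound on the number of excursions'' of $\psi$ outside $K_\delta$. Containment in a compact set bounds the range of $\psi$, not the number of times it crosses $\partial K_\delta$: a curve of very small total action can oscillate across $\partial K_\delta$ arbitrarily many times (each poke can be made short and cheap), and as you decrease $\eta$ the curve changes, so no bound uniform in $\eta$ is available. And even if the number of excursions were some fixed $N$, the splicing cost you compute is $O(NL_0\delta)$ with $\delta$ fixed in advance, so it cannot be driven below $\gamma/2$ by shrinking $\eta$; shrinking $\eta$ only controls the in-$K_\delta$ part of the action, not the correctors.

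The replacement scheme itself is also broken. You write that the deterministic segments ``remain near $\cup_i K_i\subset K_\delta$,'' but $\cup_i K_i\subset K_\delta$ is false: the other equivalent sets $K_j$, $j\neq i$, lie at distance at least $8\delta_1$ from $K$, and $\delta<\delta_1$. The deterministic flow from a point of $\partial K_\delta$ need not stay in $K_\delta$ at all — it can drift towards a different $K_j$ — and it has no reason to approach the specific exit point $p_2\in\partial K_\delta$ where the excursion ends, so there is no target to aim a short second corrector at. Finally, a straight corrector of length $<\delta/2$ from $p_1\in\partial K_\delta$ does not reach $K$, which sits at distance $\delta$; the geometry of ``corrector $\to$ deterministic arc near $K$ $\to$ corrector'' is never coherently specified. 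The underlying difficulty that Lemma~\ref{lem:fw2} is asserting — that the quasi-potential realised by curves \emph{restricted} to $K_\delta$ is also zero between points of $K$ — is exactly what your surgery silently assumes; one has to build the small-action curve inside $K_\delta$ directly from the structure of the maximal equivalent set (as Freidlin--Wentzell do), rather than try to patch up an unrestricted near-optimal curve after the fact.
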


The following lemma generalizes Lemma 6.2.1 in {\cite{fw}} from compact
sets to $\mathbb{R}^d$, which is a key lemma in this paper.

\begin{prop}\label{prop:transpb}
	Under conditions i) - iii) of Proposition \ref{prop:LDP} and  1)
	of Assumption \ref{mainassumption}, for any $\gamma > 0$, there exist constants $\rho_2 \in (0,
	\delta_1)$ and  $\varepsilon_0 >0$ such that one
	step transition probabilities of $Z^{\varepsilon}_n$ satisfy inequalities:
	\begin{equation}\label{transeq}
		\exp (- \varepsilon^{- 2} (\tilde{V}  (K_i, K_j) + \gamma)) \leq
		P^{\varepsilon}_x (Z^{\varepsilon}_1 \in \partial g_j) \leq \exp (-
		\varepsilon^{- 2} (\tilde{V}  (K_i, K_j) - \gamma)), 
	\end{equation}
	for any $i, j \in \mathcal{L}$, $x \in g_i$ and $\varepsilon \in (0,
	\varepsilon_0)$.
\end{prop}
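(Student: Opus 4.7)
The plan is to adapt Lemma~6.2.1 of \cite{fw} from the compact-manifold setting to $\mathbb{R}^d$, relying on Lemma~\ref{lem:qponcompactset}, Lemma~\ref{lem:incompaceset} and Lemma~\ref{lem:uppertimeestimate} to supply the compactness-type estimates that the original argument performs tacitly. First I would fix $\rho_0, \rho_1 \in (0, \delta_1)$ so small that the closed neighbourhoods $\overline{(K_i)_{\rho_1}}$ are pairwise disjoint and all contained in a bounded smooth domain $D$ produced by Lemma~\ref{lem:qponcompactset}, for which $\tilde{V}_{D}(K_i, K_j) = \tilde{V}(K_i, K_j)$ for every $i, j \in \mathcal{L}$. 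I would then choose $\rho_2 \in (0, \rho_1)$ small enough that, by Lemma~\ref{lem:fw2}, any two points in $(K_i)_{\rho_1}$ can be joined by a curve lying in $(K_i)_{\rho_1}$ with action at most $\gamma/8$.

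For the lower bound, assuming first $\tilde{V}(K_i, K_j) < +\infty$, I would use the definition of $\tilde{V}$ to fix a curve $\psi$ on some interval $[0, T_0]$ lying in $\mathbb{R}^d \setminus \bigcup_{s \neq i, j} K_s$ with $\psi_0 \in K_i$, $\psi_{T_0} \in K_j$ and $S_{0 T_0}(\psi) \leq \tilde{V}(K_i, K_j) + \gamma/4$. Using Lemma~\ref{lem:fw2} I would prepend a short curve from any fixed $x \in g_i$ to $\psi_0$ inside $(K_i)_{\rho_1}$ and append a short curve from $\psi_{T_0}$ to a prescribed point of $\partial g_j$ inside $(K_j)_{\rho_1}$; the resulting concatenation $\bar{\psi}$ on $[0, T]$ satisfies $S_{0 T}(\bar{\psi}) \leq \tilde{V}(K_i, K_j) + \gamma/2$. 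After choosing $\delta > 0$ so small that every continuous curve $\delta$-close to $\bar{\psi}$ is forced to reach $\partial g_j$ as its first point on $\partial g$, the uniform LDP lower bound \eqref{ldpl} on the compact set $\overline{g_i}$ yields the left inequality in \eqref{transeq}.

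For the upper bound, the main difficulty is that a trajectory from $g_i$ to $\partial g_j$ in $\mathbb{R}^d$ may in principle make long excursions or escape to infinity. To address this I would apply Lemma~\ref{lem:uppertimeestimate} with $O = g$ and $C = \partial G$ to choose $T > 0$ such that $P_y^\varepsilon(\tau_g^\varepsilon > T) \leq \exp(-\varepsilon^{-2}(\tilde{V}(K_i, K_j) + 1))$ uniformly in $y \in \partial G$. A supermartingale argument based on $\tilde{U}$ and the bound \eqref{ash4} of condition~iii) in Proposition~\ref{prop:LDP} would then show that $X^\varepsilon$ remains in $\bar{D}$ throughout $[\sigma_1^\varepsilon, \tau_1^\varepsilon]$ with the same super-exponential level of precision. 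Up to these rare events, applying the strong Markov property at $\sigma_1^\varepsilon$ reduces the estimate of $P_x^\varepsilon(Z_1^\varepsilon \in \partial g_j)$ to bounding $P_y^\varepsilon(\Psi)$ uniformly in $y \in \partial G_i$, where
\[
\Psi = \{\varphi \in C_y([0, T]; \bar{D}) : \varphi_t \notin \bigcup_{s \neq i, j} g_s \text{ for all } t \in [0, T],\ \varphi_{t_0} \in \partial g_j \text{ for some } t_0 \in [0, T]\}.
\]
By Lemma~\ref{lem:fw1} (applied inside $\bar{D}$) together with Lemma~\ref{lem:fw2}, each $\varphi \in \Psi$ can be modified into an admissible competitor for $\tilde{V}_{D}(K_i, K_j)$ at additional cost at most $\gamma/2$, whence $S_{0 T}(\varphi) \geq \tilde{V}_{D}(K_i, K_j) - \gamma/2 = \tilde{V}(K_i, K_j) - \gamma/2$; the uniform LDP upper bound \eqref{ldpu} applied to the closed set $\Psi$ on the compact set $\partial G_i$ then produces the right inequality in \eqref{transeq}.

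The main obstacle is precisely the upper bound, where non-compactness of $\mathbb{R}^d$ forces one to simultaneously (i) quantify the escape probability from $D$ via \eqref{ash4}, (ii) bound the waiting time $\tau_1^\varepsilon$ via Lemma~\ref{lem:uppertimeestimate}, and (iii) reduce a generic trajectory in $\Psi$ to a genuine competitor for $\tilde{V}_D$ via Lemmas~\ref{lem:fw1}--\ref{lem:fw2}. The case $\tilde{V}(K_i, K_j) = +\infty$ is handled by running the upper-bound argument with $\tilde{V}(K_i, K_j)$ replaced by an arbitrary $\Theta > 0$ and letting $\Theta \uparrow +\infty$; the lower bound is then vacuous.
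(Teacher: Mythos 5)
Your lower bound and the overall decomposition mirror the paper's, but the upper bound contains a genuine gap in the confinement step. You propose that ``a supermartingale argument based on $\tilde U$ and the bound \eqref{ash4}'' shows $X^\eps$ remains in $\bar D$ throughout $[\sigma_1^\eps,\tau_1^\eps]$ ``with the same super-exponential level of precision.'' Condition iii) is a Lyapunov drift inequality of the form $L^\eps \tilde U < -\chi$ outside $B_0(M)$; it yields bounded mean return times and recurrence (this is exactly how it is used in Lemma~\ref{lem:recurrent} and in \eqref{recurrentexpectation2}) but it does \emph{not} give escape probabilities decaying like $\exp(-\eps^{-2}\cdot\text{const})$. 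To turn a Lyapunov function into a super-exponential confinement bound one would need a quadratic relation between the noise term $|\sigma^T\nabla\tilde U|^2$ and the drift term, which \eqref{ash4} does not supply. The only mechanism in this paper that produces $\eps^{-2}$-rate confinement is the action functional: the cost to leave a large ball is controlled via $U$, condition i), and Lemma~\ref{lem:lowerboundedofqp} (equivalently Corollary~\ref{cor:infinitycost}), and this must be routed through the LDP, not through a drift estimate on $\tilde U$.

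The paper resolves this without ever confining the process. It works in $C_y([0,T];\mathbb R^d)$ and proves the closed-set inclusion \eqref{upper4}: every stochastic path hitting $\partial g_j$ in time $\le T^{(3)}_{ij}$ lies at distance $\ge\delta$ from the sublevel set $\Phi_y(\tilde V_D(K_i,K_j)-0.6\gamma)$. The contradiction argument establishing \eqref{upper4} is precisely where non-compactness is handled: if there were approximating pairs $(\varphi^{(n)},\psi^{(n)})$ with $\rho(\varphi^{(n)},\psi^{(n)})\to 0$ and $S_{0T}(\psi^{(n)})$ bounded, then by Lemma~\ref{lem:incompaceset} the $\psi^{(n)}$ all lie in a fixed compact $\mathcal M$, which forces the $\varphi^{(n)}$ into a neighbourhood of $\mathcal M$ for free. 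Thus the possible long excursions of the raw process are never a separate event to be estimated — they are ruled out automatically because a nearby low-action curve cannot escape. To repair your proposal you would either need to replace the supermartingale step by an LDP estimate on exiting $\bar D$ before time $T$ (using Corollary~\ref{cor:infinitycost} and \eqref{ldpc}), or absorb the confinement into the contradiction as the paper does; as written the step does not follow from the cited conditions. (A minor further slip: you invoke \eqref{ldpu}, which bounds the probability of being far from a sublevel set, where the argument you describe actually requires the closed-set form \eqref{ldpc} with $\inf_{\varphi\in\Psi}S_{0T}(\varphi)$ bounded below.)
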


\begin{proof}
	If $\tilde{V}  (K_i, K_j) = + \infty$, then there is no continuous trajectory, which connects $K_i$ and $K_j$ without touching any $K_l$ for $l\neq i,j$. Otherwise, by Lemma \ref{lem:qpcontinuous}, the quasi-potential must be finite. Hence,
	we have $P^{\varepsilon}_x
	(Z^{\varepsilon}_1 \in \partial g_j) = 0$. Thus, inequality \eqref{transeq} holds.
	
	Now, we suppose that $\tilde{V}  (K_i, K_j) < + \infty$ and $i \neq j$.

	\textbf {Proof of the lower bound of \eqref{transeq}:}
	By Lemma \ref{lem:qponcompactset}, we can choose a
	compact set $D$ satisfying $D \supset B_0(M)$ and $\tilde{V}  (K_i, K_j) =
	\tilde{V}_D (K_i, K_j)$.  Let us arbitrarily fix $\rho_0 \in \left( 0, \delta_1 \wedge \frac{\gamma}{10 L}
	\right)$ and $\rho_1 \in (0, \rho_0)$, where $L$ is the constant
	corresponding to $D$ in Lemma \ref{lem:qpcontinuous}.
	
	By the definition of $\tilde{V}_D (K_i,
	K_j)$, there exists a pair of $T_{i j} > 0$, $\varphi^{i j} \in C ([0, T_{i
		j}] ; D)$ such that
	\[ \varphi^{i j}_0 \in K_i,\quad \varphi^{i j}_{T_{i j}} \in K_j,\quad \varphi^{i j}_t
	\in D \backslash \bigcup_{s \neq i, j} K_s \textrm{ for all } t\in[0,T_{ij}], \]
	and
	\[ S_{0 T_{i j}} (\varphi^{i j}) \leq \tilde{V} (K_i, K_j) + 0.1 \gamma . \]
	We choose positive constants $\rho_2, d_{ij}$ satisfying
	\[  \rho_2 \in \left(0, \rho_1 \wedge \frac{1}{3} \tmop{dist} \left( \varphi^{i j},
	\bigcup_{s \neq i, j} K_s \right)\right), \quad d_{i j} \in \left( 0, \frac{1}{3} \tmop{dist} \left( \varphi^{i j},
	\bigcup_{s \neq i, j} K_s \right) \wedge \rho_2 \right). \]
	
	For any $x \in \partial g_i$, by Lemma \ref{lem:qpcontinuous}, we can connect $x$ to
	$K_i$ with a $\phi^{(1)} \in C ([0, \rho_2] ; g_i \cup \partial g_i)$
	satisfying $S_{0 \rho_2} (\phi^{(1)}) \leq 0.1 \gamma$. According to Lemma
	\ref{lem:fw2}, we can construct a curve $\phi^{(2)} \in C ([0, T^{(2)}_{i
		j}] ; g_i)$ from $\phi_{\rho_2}^{(1)}$ to $\varphi^{i j}_0$ satisfying $S_{0
		T^{(2)}_{i j}} (\phi^{(2)}) \leq 0.1 \gamma$.
	
	Furthermore, we construct a curve connecting $\partial g_i$ and
	$K_j$ as follow. Firstly, we connect the end point of $\phi^{(1)}$ to
	the start point of $\phi^{(2)}$. Secondly, we connect the end point of
	$\phi^{(2)}$ to the start point of $\varphi^{i j}$. For the sake of
	simplicity, we still denote this new curve as $\varphi^{i j}$ with time
	$T_{i j}$. Hence, by \eqref{ldpl} we know that for any $ x \in \partial g_i$,
	there exists a $\varepsilon_2^{i j} > 0$ such that we have
	\begin{eqnarray*}
		P^{\varepsilon}_x (Z_1^{\varepsilon} \in \partial g_j) & \geq &
		P^{\varepsilon}_x (\rho_{0 T_{i j}} (X_{\cdot}^{\varepsilon},
		\varphi_{\cdot}^{i j}) < d_{i j})\\
		& \geq & \exp (- \varepsilon^{- 2} (S_{0 T_{i j}} (\varphi^{i j}) + 0.1
		\gamma))\\
		& \geq & \exp (- \varepsilon^{- 2} (\tilde{V}  (K_i, K_j) + 0.4 \gamma))
		,
	\end{eqnarray*}
	for any $\varepsilon \in (0, \varepsilon_1 \wedge \varepsilon_2^{i j})$.
	Thus, the lower bound of \eqref{transeq} is true.
	
	\textbf{Proof of the upper bound of \eqref{transeq}:} By the strong Markov property of the solution $X^{\varepsilon}_t$, we have
	\begin{equation}\label{upper1}
		P^{\varepsilon}_x (Z_1^{\varepsilon} \in \partial g_j) = E^{\varepsilon}_x
		\left( E^{\varepsilon}_x \left( 1_{\{
			X_{\tau_1^{\varepsilon}}^{\varepsilon} \in \partial g_j \}} |
		X^\eps_{\sigma^{\varepsilon}_1} \nobracket \right) \right) \leq
		\sup_{y \in \partial G_i} P^{\varepsilon}_y
		(X_{\tau_1^{\varepsilon}}^{\varepsilon} \in \partial g_j) , \quad \forall x \in \partial g_i.
	\end{equation}
	It is clear that for any $T^{\ast} > 0$, $y \in \partial G_i$ we have
	
	\begin{equation}\label{upper2}
		P^{\varepsilon}_y (X_{\tau_1^{\varepsilon}}^{\varepsilon} \in \partial
		g_j) \leq P^{\varepsilon}_y (\tau^{\varepsilon}_1 \geq T^{\ast}) +
		P^{\varepsilon}_y (\tau^{\varepsilon}_1 \leq T^{\ast},
		X_{\tau_1^{\varepsilon}}^{\varepsilon} \in \partial g_j) .
	\end{equation}
	By Lemma \ref{lem:uppertimeestimate}, for any $y \in \partial G_i$, there exist two
	constants $T_{i j}^{ (3)}, \varepsilon_3^{i j} > 0$ such that for any
	$\varepsilon \in (0, \varepsilon_3^{i j})$, the first term in the right hand side
	of \eqref{upper2} satisfies
	\begin{equation}\label{upper3}
		P^{\varepsilon}_y (\tau^{\varepsilon}_1 \geq T_{i j}^{(3) }) \leq \exp (-
		\varepsilon^{- 2} (\tilde{V}  (K_i, K_j) + \gamma + 1)) .
	\end{equation}
	For the second term in the left side of \eqref{upper2}, we claim that
	there exists a constant $\delta > 0$ such that for all $y \in \partial
	G_i$, we have
	\begin{equation}\label{upper4}
		\{ \varphi : \tau^{\varepsilon}_1 \leq T_{i j}^{ (3)},
		\ph_{\tau_1^{\varepsilon}}^{\varepsilon} \in \partial g_j, \ph_0^{\varepsilon}
		= y \} \nobracket \subset \{ \varphi : \rho_{0 T_{i j}^{ (3)}} (\varphi,
		\Phi_y (\tilde{V}_D (K_i, K_j) - 0.6 \gamma)) \geq \delta \}.
	\end{equation}
	Otherwise, for each $n\in \bb N^+$, we set $\delta_n=\frac{1}{n}$, then there exist three sequences: $\{ y_n \} \subset \partial G_i$,
	$\{ \varphi^{(n)} \} \subset \{ \varphi : \varphi \in C_{y_n} ([0, T_{i j}^{
		(3)}] ; \mathbb{R}^d), \tau^{\varepsilon}_1 \leq T_{i j}^{ (3)},
	\varphi_{\tau^{\varepsilon}_1} \in \partial g_j \}$ and $\{ \psi^{(n)}
	\} \subset C_{y_n} ([0, T_{i j}^{ (3)}] ; \mathbb{R}^d)$, which satisfy $S_{0 T_{i j}^{ (3)}} (\psi^{(n)}) \leq
	\tilde{V}_D (K_i, K_j) - 0.6 \gamma$ and $\rho_{0 T_{i j}^{ (3)}} \left(
	{\varphi^{(n)}} , \psi^{(n)} \right) \leq \delta_n= \frac{1}{n}$, $\forall n \in
	\mathbb{N}^+$. By Lemma
	\ref{lem:incompaceset}, there exists a compact set $\mathcal{M} \subset
	\mathbb{R}^d$ such that $ \psi^{(n)}  \subset \mathcal{M}, \forall
	n \in \mathbb{N}^+$. By some suitable choices, we can ensure that $\mathcal{M}$ with smooth $\partial \mathcal{M}$
	contains $D$. Since these $\psi^{(n)}$ may intersect with $\cup_{s \neq i, j} K_s$, we need the following
	$\tilde{\psi}^{(n)}$. By Lemma \ref{lem:fw1}, we know that there exists
	a $\beta \in \left( 0, \delta_1 \wedge \frac{\gamma}{10 L} \right)$ such
	that for any $\psi^{(n)}$, there exists a $\tilde{\psi}^{(n)}$
	satisfying $\tilde{\psi}^{(n)} \subset \mathcal{M}_{- \beta} \backslash
	\cup_{s \neq i, j} (K_s)_{\beta}$ and
	\[ S_{0 T_{i j}^{ (3)}} (\tilde{\psi}^{(n)}) \leq S_{0 T_{i j}^{ (3)}}
	(\psi^{(n)}) + 0.1 \gamma \leq \tilde{V}_D (K_i, K_j) - 0.5 \gamma .
	\]
	Let us fix a positive integer $n > \frac{10 L}{\gamma}$. By setting $T_{i j}^{ (4)}
	= \rho_1$, we can connect $K_i$ and $y_n$ through a curve $\eta^{(1)}$ satisfying
	$S_{0 T_{i j}^{ (4)}} (\eta^{(1)}) \leq 0.1 \gamma$. Furthermore, setting
	$T_{i j}^{ (5)} = \tmop{dist} \left( \tilde{\psi}_{\tau^{\varepsilon}_1
		(\varphi^{(n)})}^{n}, K_j \right)$, we can connect
	$\tilde{\psi}_{\tau^{\varepsilon}_1 (\varphi^{(n)})}^{n}$ and $K_j$
	through a curve $\eta^{(2)}$ satisfying $S_{0 T_{i j}^{ (5)}} (\eta^{(2)}) \leq
	0.2 \gamma$. We denote $\tilde{T} = T_{i j}^{ (4)} + \tau^{\varepsilon}_1
	(\varphi) + T_{i j}^{ (5)}$ and construct the curve as
	\[ \xi (t) = \left\{ \begin{array}{ll}
		\eta_t^{(1)}, \qquad  & t \in [0, T_{i j}^{ (4)}],\\
		\tilde{\psi}_t^{n}, \qquad & t \in [T_{i j}^{ (4)}, T_{i j}^{ (4)} +
		\tau^{\varepsilon}_1 (\varphi)],\\
		\eta_t^{(2)}, \qquad & t \in [T_{i j}^{ (4)} + \tau^{\varepsilon}_1
		(\varphi), \tilde{T}].
	\end{array} \right. \]
	Then we have $\xi \in C ([0, \tilde{T}] ; \mathbb{R}^d \backslash \cup_{s \neq i,
		j} (K_s)_{\beta})$, $\xi_0 \in K_i$, $\xi_{\tilde{T}} \in K_j$ and $S_{0
		\tilde{T}} (\xi) \leq \tilde{V} (K_i, K_j) - 0.1 \gamma$. The above facts
	are contradict to the definition of $\tilde{V} (K_i, K_j)$. Therefore,
	\eqref{upper4} holds.
	
	Thus, according to \eqref{ldpu}, there exists an $\varepsilon_4^{i j} >
	0$ such that
	\begin{eqnarray}\label{upper5}
		P^{\varepsilon}_y (\tau^{\varepsilon}_1 \leq T_{i j}^{ (3)},
		X_{\tau_1^{\varepsilon}}^{\varepsilon} \in \partial g_j) & \leq &
		P_y^{\varepsilon} (\{ \varphi : \rho_{0 T_{i j}^{ (3)}} (\varphi, \Phi_y
		(\tilde{V}_D (K_i, K_j) - 0.6 \gamma)) \geq \delta \}) \nonumber\\
		& \leq & \exp (- \varepsilon^{- 2} (\tilde{V}  (K_i, K_j) - 0.7 \gamma))
	\end{eqnarray}
	for any $\varepsilon \in (0, \varepsilon_4^{i j})$ and $y \in \partial G_i$.
	
	Combining \eqref{upper1}-\eqref{upper3} with \eqref{upper5}, for
	any $x \in \partial g_i$, $\gamma > 0$ as long as $\varepsilon <
	\varepsilon_1 \wedge \varepsilon_3^{i j} \wedge \varepsilon_4^{i j}$ we have
	\[ P^{\varepsilon}_x (X_{\tau_1^{\varepsilon}}^{\varepsilon} \in \partial
	g_j) \leq \exp (- \varepsilon^{- 2} (\tilde{V}  (K_i, K_j) - \gamma)) .
	\]
	Thus, the proof of the case $\tilde{V}  (K_i, K_j) < + \infty$, $i \neq j$
	is finished.
	
	For the case $i=j \in \cl L$, we have $\tilde{V}  (K_i, K_i) =0$. Thus, the upper bound is
	clear. 
	The lower bound can be proved by the same method as in the case of $i \neq j$.
	
	Finally, let us set $\varepsilon_2 = \min_{i, j} \varepsilon_2^{i j}$, $\varepsilon_3 =
	\min_{i, j} \varepsilon_3^{i j}$ and $\varepsilon_4 = \min_{i, j}
	\varepsilon_4^{i j}$. From the above discussion, if
	we choose $\rho_0 < \delta_1 \wedge \frac{\gamma}{10 L}$, $\rho_1 < \rho_0$,
	$\rho_2 <  \rho_1 \wedge \frac{1}{3} \tmop{dist} \left( \varphi^{i j},
	\cup_{s \neq i, j} K_s \right)$ and $\varepsilon_0 <
	\varepsilon_1 \wedge \varepsilon_2 \wedge \varepsilon_3 \wedge
	\varepsilon_4$, then \eqref{transeq} holds for all $i, j \in \mathcal{L}$, $x \in g_i$ and $\varepsilon \in (0,
	\varepsilon_0)$.
	
	Therefore, the proof of Proposition \ref{prop:transpb} is complete.
	
\end{proof}

For the rest of our paper, we need following notations and results from {\cite{fw}}.

For any
$i\in \mathcal{L}$, $m \in \mathcal{L} \backslash \{i\}$, $n \in
\mathcal{L}$ and $m \neq n$, a set consisting of arrows ``$m \rightarrow
n$'' is called an $\{i\}$-graph if
\begin{trivlist}
	\item [i).]
	Every $m \in \mathcal{L} \backslash \{i\}$ is an initial point of exactly
	one arrow.
	
	\item[ii).]
	There are no cycles in the set.
\end{trivlist}
Let $G (i)$ be the set of all $\{i\}$-graphs
and set
\[ W (K_i) = \min_{q \in G (i)} \sum_{(m \rightarrow n) \in q} \tilde{V} (K_m, K_n) . \]
$W (K_i)$ can be understood as the minimum total cost of getting $K_i$. 

Lemma 6.4.1 in {\cite{fw}} shows $W (K_i) = \min_{q \in G (i)}
\sum_{(m \rightarrow n) \in q} V (K_m, K_n)$.

According to the proof of lemma 6.4.3 in {\cite{fw}},
it is easy to check that $\min_{i \in \mathcal{L}} W
(K_i)$ can only be attained in the stable $K_i$ for \eqref{1.2}, since all $\omega$-limit sets of \eqref{1.1} are in a compact set $\cup_{i \in \mathcal{L}}K_i $.

\begin{lem}\label{lem:invariantmeasureestimate}
	Under conditions i) - iii) of Proposition \ref{prop:LDP} and 1)
	of Assumption \ref{mainassumption}, for any $\gamma > 0$, there exist positive constants $\rho_2, \varepsilon_0 $ such that for any $\varepsilon \in (0, \varepsilon_0)$,
	the invariant measure $\nu_{\varepsilon}$ of $\{ Z^{\varepsilon}_n \}$
	satisfies
	\begin{equation}\label{invariantmeasofMC}
		\nu_{\varepsilon} (\partial g_i) \in (e (i, \varepsilon, 4 (l - 1)
		\gamma), e (i, \varepsilon, - 4 (l - 1) \gamma)), 
	\end{equation}
	where $e (i, \varepsilon, \gamma): = \exp (- \varepsilon^{- 2} (W
	(K_i) - \min_j W (K_j) + \gamma))$.
\end{lem}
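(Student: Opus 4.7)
The plan is to reduce the problem to the invariant-measure computation of an auxiliary finite-state Markov chain on $\cl L = \{1,\ldots,l\}$, then apply the matrix-tree representation together with the definition of $W(K_i)$.

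First, I would exploit the invariance of $\nu_\varepsilon$ under the one-step kernel of $\{Z^\varepsilon_n\}$. Setting $\nu_i^\varepsilon := \nu_\varepsilon(\partial g_i)$, the invariance identity
\[
\nu_j^\varepsilon \;=\; \sum_{i\in\cl L}\int_{\partial g_i} P^\varepsilon_x(Z_1^\varepsilon \in \partial g_j)\, d\nu_\varepsilon(x),
\]
together with the uniform two-sided bounds of Proposition \ref{prop:transpb}, sandwiches the vector $(\nu_i^\varepsilon)_{i\in\cl L}$ between left invariant vectors of two stochastic matrices $\underline P, \overline P$ on $\cl L$ whose entries differ from $\exp(-\varepsilon^{-2}\tilde V(K_i,K_j))$ by multiplicative factors of order $\exp(\pm\varepsilon^{-2}\gamma)$. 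The total mass on $\partial g$ is preserved thanks to Lemma \ref{lem:recurrent}, and the $O(1)$ normalising constants needed to turn the raw bounds into stochastic matrices can be absorbed into an $\varepsilon^{-2}\gamma$ error by shrinking $\varepsilon_0$.

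Second, I would apply the matrix-tree formula of Lemma 6.3.1 in \cite{fw}: for a stochastic matrix $(p_{mn})$ on $\cl L$, the invariant mass at $i$ is proportional to $Q_i = \sum_{q\in G(i)}\prod_{(m\to n)\in q} p_{mn}$. Substituting the bounds on $p_{mn}$ and using that every $\{i\}$-graph has exactly $l-1$ edges,
\[
Q_i \;=\; \sum_{q\in G(i)} \exp\!\left(-\varepsilon^{-2}\sum_{(m\to n)\in q}\tilde V(K_m,K_n) \pm \varepsilon^{-2}(l-1)\gamma\right).
\]
A sum of finitely many exponentials is bracketed between its largest summand and $|G(i)|$ times its largest summand, so combined with $W(K_i) = \min_{q\in G(i)}\sum_{(m\to n)\in q}\tilde V(K_m,K_n)$ (the defining identity from the excerpt), and after absorbing $\log|G(i)|$ into $\varepsilon^{-2}\gamma$ by a further reduction of $\varepsilon_0$, I obtain
\[
\exp\!\bigl(-\varepsilon^{-2}(W(K_i) + 2(l-1)\gamma)\bigr) \;\leq\; Q_i \;\leq\; \exp\!\bigl(-\varepsilon^{-2}(W(K_i) - 2(l-1)\gamma)\bigr).
\]

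Third, the normalisation $\nu_i^\varepsilon \asymp Q_i/\sum_{k\in\cl L} Q_k$ couples the numerator and denominator estimates: using the upper bound on $Q_i$ together with the lower bound on the dominating term of $\sum_k Q_k$ gives the upper bound on $\nu_i^\varepsilon$, and vice versa for the lower bound. Each direction doubles the $2(l-1)\gamma$ slack accumulated in the single-vertex estimate, producing the final $4(l-1)\gamma$ error announced in \eqref{invariantmeasofMC}. The main obstacle is the reduction to the finite-state setting: since $Z_n^\varepsilon$ lives on the uncountable set $\partial g$, the transition probabilities $P^\varepsilon_x(Z_1^\varepsilon\in\partial g_j)$ depend on $x\in\partial g_i$, not merely on $i$, and only the uniformity in $x$ built into Proposition \ref{prop:transpb} permits passing from the integrated invariance equation to a finite-state one. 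A secondary bookkeeping issue is tracking how the cardinalities $|G(i)|$ (polynomial in $l$) and the row-normalisation factors are absorbed into $\varepsilon^{-2}\gamma$, which dictates the final choice of $\varepsilon_0 = \varepsilon_0(\gamma, l)$.
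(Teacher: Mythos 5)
Your argument is correct and matches the paper's approach: the paper applies Proposition~\ref{prop:transpb} and then quotes Lemma~6.3.2 of \cite{fw} as a black box, and what you have written is essentially a reconstruction of that lemma's proof via the Kirchhoff/matrix-tree representation (Lemma~6.3.1 of \cite{fw}) combined with the uniform two-sided bounds on the one-step transition probabilities, including the crucial observation that the uniformity in $x\in\partial g_i$ is what permits reducing the uncountable state space $\partial g$ to the finite index set $\cl L$. One small caveat worth correcting: the intermediate remark that $(\nu_i^\varepsilon)$ is ``sandwiched between left invariant vectors of two stochastic matrices $\underline P, \overline P$'' is not literally justified---monotone bounds on the entries of a kernel do not by themselves propagate to monotone bounds on its invariant measure---but this phrase is not load-bearing, since you then switch to the correct mechanism of bounding the tree-sums $Q_i$ directly and normalising, which is exactly what Lemma~6.3.2 does.
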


\begin{proof}
	By Proposition \ref{prop:transpb}, for any $\gamma > 0$, there exist
	$\rho_0, \rho_1, \rho_2 > 0$ and an $\varepsilon_0 > 0$ such that
	\eqref{transeq} holds for any $\varepsilon \in (0, \varepsilon_0)$ and
	every pair of $i, j$. Thus, according to the standard result in Lemma 6.3.2 of
	{\cite{fw}}, we know that \eqref{invariantmeasofMC} is true.
	
\end{proof}

\begin{lem}\label{lem:invariantmeasureexist}
	Under conditions i) - ii) of Proposition \ref{prop:LDP} and 1), 3) 
	of Assumption \ref{mainassumption}, there exist two positive constants $\delta_2, \delta_3$
	such that for any fixed $\rho_1 \in (0,
	\delta_2)$ and $ \rho_2\in (0, \rho_1 \wedge \delta_3 )$, there exists an $\varepsilon_5=\varepsilon_5(\rho_2)>0$ such that for any $A \subset \mathcal{B} (\mathbb{R}^d)$, the unique invariant
	measure $\mu_{\varepsilon}$ of \eqref{1.2} can be represented as
	\begin{equation}\label{invariantmeasurebounded}
		\mu_{\varepsilon} (A) = \int_{\partial g} E^{\varepsilon}_y
		\int^{\tau^{\varepsilon}_1}_0 1_A (X^{\varepsilon}_t) d t
		\nu_{\varepsilon} (d y),
	\end{equation}
	for any $\varepsilon \in (0,\varepsilon_5)$.
\end{lem}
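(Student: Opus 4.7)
The plan is to invoke the Khasminskii cycle construction, which expresses the invariant measure of a recurrent diffusion as a time-average over the excursions of the embedded Markov chain $Z^\varepsilon_n$; see Chapter 4 of \cite{km}. The starting observation is that via Remark \ref{rem:con2simplification} together with condition 3) of Assumption \ref{mainassumption}, the function $\tilde U$ is a Lyapunov function outside arbitrarily small neighbourhoods of $\cup_i K_i$, so $X^\varepsilon$ is positive recurrent for every small $\varepsilon$. Combined with the non-degeneracy of $\sigma \sigma^T$ in condition ii) of Proposition \ref{prop:LDP} and the local Lipschitz continuity of the coefficients, the standard strong-Feller-plus-irreducibility argument yields existence and uniqueness of $\mu_\varepsilon$.

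First I fix the geometric parameters. I choose $\delta_2,\delta_3>0$ small enough that for every $\rho_1\in(0,\delta_2)$ and $\rho_2\in(0,\rho_1\wedge\delta_3)$ the neighbourhoods $g\subset G$ sit strictly inside $B_0(M)$, condition 3) applies to the complement of $\cup_i(K_i)_{\rho_2}$, and both Proposition \ref{prop:transpb} and Lemma \ref{lem:uppertimeestimate} are applicable. This pins down $\varepsilon_5=\varepsilon_5(\rho_2)$. Next I analyse the embedded chain $Z^\varepsilon_n=X^\varepsilon_{\tau^\varepsilon_n}$ on the compact set $\partial g$. Lemma \ref{lem:recurrent} makes the chain well defined. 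Its one-step kernel is Feller, and by the two-sided exponential bounds in Proposition \ref{prop:transpb} every component $\partial g_j$ is reached from every other with positive probability; together with the strong Feller behaviour within each $\partial g_j$ coming from the non-degeneracy of $\sigma\sigma^T$, this yields a unique invariant probability measure $\nu_\varepsilon$ on $\partial g$.

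With $\nu_\varepsilon$ in hand, I verify that the set function
\[
\tilde\mu_\varepsilon(A) := \int_{\partial g} E^\varepsilon_y\!\left[\int_0^{\tau^\varepsilon_1} 1_A(X^\varepsilon_t)\,dt\right]\nu_\varepsilon(dy)
\]
is invariant for the transition semigroup of $X^\varepsilon$. The argument is standard: for any bounded measurable $f$, applying the strong Markov property at $\tau^\varepsilon_k$ together with the $\nu_\varepsilon$-invariance of the chain telescopes $\int P^\varepsilon_t f\, d\tilde\mu_\varepsilon$ into an expression that, after a change of variables in the time integral, reduces to $\int f\, d\tilde\mu_\varepsilon$. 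Uniqueness of $\mu_\varepsilon$ then forces $\tilde\mu_\varepsilon=c\,\mu_\varepsilon$ for some $c>0$; absorbing the multiplicative constant into $\nu_\varepsilon$ (which is only determined up to scale) gives exactly \eqref{invariantmeasurebounded}.

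The main technical step, and what I expect to be the main obstacle, is establishing that $\tilde\mu_\varepsilon$ is in fact finite---equivalently, that $\int_{\partial g}E^\varepsilon_y[\tau^\varepsilon_1]\,\nu_\varepsilon(dy)<+\infty$---so that $\tilde\mu_\varepsilon$ is a genuine non-trivial invariant measure that can be identified with $\mu_\varepsilon$. Writing $\tau^\varepsilon_1=\sigma^\varepsilon_1+(\tau^\varepsilon_1-\sigma^\varepsilon_1)$ and using the strong Markov property at $\sigma^\varepsilon_1$, both summands are controlled by applying Lemma \ref{lem:uppertimeestimate} twice: once with $O=G$ starting from $y\in\partial g$, and once with $O=g$ starting from $\partial G$. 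In each case the tail bound $P^\varepsilon_x(\tau^\varepsilon_O>T)\le\exp(-\varepsilon^{-2}\alpha)$ is integrated in $T$ to produce a finite expectation, uniform on the compact sets $\partial g$ and $\partial G$. This compactness-plus-tail-estimate step is where the Lyapunov hypothesis 3) of Assumption \ref{mainassumption} really does the work, and it is the linchpin of the whole representation.
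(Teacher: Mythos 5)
Your overall architecture matches the paper's: write down $\tilde\mu_\varepsilon$ via the Khasminskii cycle representation, prove it is a finite invariant measure, and then use strong Feller plus irreducibility to identify it with the unique $\mu_\varepsilon$. The finiteness reduction $\mu_\varepsilon(\mathbb{R}^d)\le\sup_{y\in\partial g}E^\varepsilon_y\tau^\varepsilon_1$ followed by the decomposition $E^\varepsilon_y\tau^\varepsilon_1\le\sup_{z\in\partial G}E^\varepsilon_z\tau^\varepsilon_1+E^\varepsilon_y\sigma^\varepsilon_1$ is precisely the paper's route.

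However, there is a concrete gap in how you control the two terms. You claim both summands follow from ``Lemma \ref{lem:uppertimeestimate} applied twice: once with $O=G$ starting from $y\in\partial g$, and once with $O=g$ starting from $\partial G$.'' The second application is legitimate in form (though even there the lemma as stated only gives a tail bound at a single $T$, and to convert $P(\tau^\varepsilon_g>T)\le e^{-\varepsilon^{-2}\alpha}$ into $E\tau^\varepsilon_g<\infty$ you would need additional work to iterate the estimate over unbounded excursions; the paper bypasses this by invoking Theorem~3.9 of \cite{km} directly, using $\tilde U$ from condition 3) of Assumption \ref{mainassumption} to bound the expectation outright). But the first application does not make sense: $\sigma^\varepsilon_1$ is the \emph{exit} time from $G$, whereas Lemma \ref{lem:uppertimeestimate} bounds the \emph{entrance} time into a neighbourhood $O$ of the equilibria starting from a compact $C\subset\mathbb{R}^d\setminus O$. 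Here the starting set $\partial g$ lies \emph{inside} $G$, violating the hypothesis, and the quantity being estimated has the opposite character. Controlling $E^\varepsilon_y\sigma^\varepsilon_1$ for $y\in\partial g$ -- the expected time to traverse the thin annulus between $\partial g$ and $\partial G$ -- requires a different tool: the paper invokes Lemmas 6.1.7 and 6.1.8 of \cite{fw}, which produce the two-sided bound $E^\varepsilon_y\sigma^\varepsilon_1\in\exp(\pm\varepsilon^{-2}\gamma/(2l))$, and it is precisely these two lemmata that force the constraints $\rho_1<\delta_2$, $\rho_2<\delta_3\wedge\rho_1$. Consequently your attribution of $\delta_2,\delta_3$ to a purely geometric fitting condition, and your closing claim that condition 3) alone is ``the linchpin of the whole representation,'' are both inaccurate: condition 3) controls only the re-entrance from $\partial G$ to $\partial g$, while the separate Freidlin--Wentzell annulus estimates are an indispensable second ingredient that your proposal omits.
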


\begin{proof}
	The uniqueness of $\mu_{\varepsilon}$ is owing to the strong Feller property
	and irreducibility of $X^{\varepsilon}_t$, which can be found in
	{\cite{dyl}} and {\cite{dp}}. Thus, we only need to prove that the measure
	given in \eqref{invariantmeasurebounded} is a finite invariant measure. To show
	the finiteness of $\mu_\varepsilon$, we should prove
	$\mu_{\varepsilon} (\mathbb{R}^d) < + \infty$. In fact,
	\begin{eqnarray}\label{invariantmeasurebounded1}
		\mu_{\varepsilon} (\mathbb{R}^d) & = & \int_{\partial g} E^{\varepsilon}_y
		\int^{\tau^{\varepsilon}_1}_0 1_{\mathbb{R}^d} (X^{\varepsilon}_t) d t
		\nu_{\varepsilon} (d y) \nonumber\\
		& = & \int_{\partial g} E^{\varepsilon}_y \tau^{\varepsilon}_1
		\nu_{\varepsilon} (d y) \nonumber\\
		& \leq & \sup_{y \in \partial g} E^{\varepsilon}_y \tau^{\varepsilon}_1 .
	\end{eqnarray}
	By the strong Markov property of $X^{\varepsilon}_t$, we have
	\begin{equation}\label{invariantmeasurebounded2}
		E^{\varepsilon}_y \tau^{\varepsilon}_1 = E^{\varepsilon}_y
		(E^{\varepsilon}_y (\tau^{\varepsilon}_1 - \sigma_1^{\varepsilon} |
		\nobracket \mathcal{F}_{\sigma_1^{\varepsilon}}) + \sigma_1^{\varepsilon})
		\leq \sup_{z \in \partial G} E^{\varepsilon}_z \tau^{\varepsilon}_1 +
		E^{\varepsilon}_y \sigma_1^{\varepsilon}.
	\end{equation}
	
	According to Lemma 6.1.7 and Lemma 6.1.8 in {\cite{fw}}, we know that for any $\gamma>0$, there exist
	$\delta_2, \delta_3, \varepsilon' \in (0, + \infty)$ such that for any $\rho_1 \in (0,
	\delta_2)$, $\rho_2 \in (0,\delta_3 \wedge \rho_1) $ and $\varepsilon \in (0, \varepsilon')$ we have
	\begin{equation}\label{recurrentexpectation1}
		E^{\varepsilon}_y \sigma_1^{\varepsilon} \in \exp \left( \pm \varepsilon^{-
			2} \frac{\gamma}{2 l} \right).
	\end{equation}
	On the other hand, according to 3) of Assumption \ref{mainassumption} and Theorem 3.9 in {\cite{km}} (We take $V(s, x)$ therein as $U (x)$, $U_1$ therein as $\mathbb{R}^d \backslash g$),
	there exists a $\beta=\beta(\rho_1,\rho_2) \in (0, + \infty)$, such that
	\begin{equation}\label{recurrentexpectation2}
		\sup_{z \in \partial G} E^{\varepsilon}_z \tau^{\varepsilon}_1 \leq \beta \triangleq \chi_{\rho_2}^{-1}\max_{x \in \partial G}\tilde{U}(x),
	\end{equation}
	for any $\varepsilon \in (0, \varepsilon_{\rho_2})$. Let 
	$\varepsilon_5 = \varepsilon_{\rho_2} \wedge \varepsilon'$.
	By \eqref{invariantmeasurebounded1} and \eqref{invariantmeasurebounded2}, we get
	\[ E^{\varepsilon}_y \tau^{\varepsilon}_1 < \beta + \exp \left( -
	\varepsilon^{- 2} \frac{\gamma}{2 l} \right) < + \infty, \]
	for any $y \in \partial g, \varepsilon \in (0,\varepsilon_5)$, which implies the well-posedness of $\mu_{\varepsilon}$.
	
	To show $\mu_{\varepsilon}$ is an invariant measure of
	$X_t^{\varepsilon}$, we need to prove that for any bounded continuous function $f$ in
	$\mathbb{R}^d$, $\mu_{\varepsilon}$ satisfies
	\begin{equation*}
		\int_{\mathbb{R}^d} f (x) \mu_{\varepsilon} (d x) = \int_{\mathbb{R}^d}
		E^{\varepsilon}_x f (X^{\varepsilon}_t) \mu_{\varepsilon} (d x), \quad
		\forall t \in (0, + \infty).
	\end{equation*}
	This can be proved by the method provided in Theorem 4.1 in
	{\cite{km}}.
	
\end{proof}

\begin{rem}\label{simplifiedbeta}
	If $U(x)$ satisfies 3)$'$, then we can choose
	\begin{equation*}
		\beta = 2 (\min_{x \in \mathbb{R}^d \backslash g} \zeta| \nabla U (x) |^2
		\wedge \chi)^{- 1} \max_{x \in \partial G} U (x).
	\end{equation*}
	This fact means that for any fixed $ \rho_1, \rho_2>0 $, there exists a constant $\varepsilon_{\rho_2}>0$ satisfying $\sup_{z \in \partial G} E^{\varepsilon}_z \tau^{\varepsilon}_1 \leq \beta$, for any $\varepsilon \in (0,\varepsilon_{\rho_2})$.
\end{rem}

\

The next theorem is the main result of this paper. Let $I = \{ i : i
\in \mathcal{L}, K_i\ \tmop{is}\ a \ \tmop{stable}\ \tmop{set} \}$ and its subset
$I_0 = \{ i : i \in I, W (K_i) = \min_{j \in \mathcal{L}} W (K_j)
\}$.

\begin{thm}\label{thm:mian}
	Suppose that conditions i) - ii) of Proposition \ref{prop:LDP} and
	Assumption \ref{mainassumption} are true, then $\mu$ supports on
	$\cup_{i \in I_0} K_i$.
\end{thm}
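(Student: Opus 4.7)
The plan is to establish that $\mu_\varepsilon(N_\delta^c)\to 0$ as $\varepsilon\downarrow 0$ for every sufficiently small $\delta>0$, where $N_\delta:=\bigcup_{i\in I_0}(K_i)_\delta$. Since $N_\delta$ is open and $\{\delta>0:\mu(\partial N_\delta)>0\}$ is at most countable, for all but countably many $\delta$ the set $N_\delta$ is a $\mu$-continuity set, so this forces $\mu(N_\delta)=1$; monotone convergence as $\delta\downarrow 0$, together with closedness of each $K_i$, then gives $\mu(\bigcup_{i\in I_0}K_i)=1$ and the theorem.

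Fix $\delta\in(0,\delta_1)$ and decompose $N_\delta^c=A_1\cup A_2\cup A_3$, where $A_1=\bigcup_{i\in I\setminus I_0}(K_i)_\delta$, $A_2=\bigcup_{i\in\mathcal{L}\setminus I}(K_i)_\delta$, and $A_3=\mathbb{R}^d\setminus\bigcup_{i\in\mathcal{L}}(K_i)_\delta$. The core tool is the cycle representation of Lemma~\ref{lem:invariantmeasureexist},
\[\mu_\varepsilon(A)=Z_\varepsilon^{-1}\int_{\partial g}E_y^\varepsilon\int_0^{\tau_1^\varepsilon}1_A(X_t^\varepsilon)\,dt\,\nu_\varepsilon(dy),\qquad Z_\varepsilon=\mu_\varepsilon(\mathbb{R}^d),\]
with $\rho_1<\delta$ so that $G_i\subset(K_i)_\delta$, combined with Lemma~\ref{lem:invariantmeasureestimate} for $\nu_\varepsilon(\partial g_i)$ and the bound on $E_y^\varepsilon\sigma_1^\varepsilon$ used in the proof of Lemma~\ref{lem:invariantmeasureexist}. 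The $I_0$-summands give $Z_\varepsilon\geq\exp(-\varepsilon^{-2}c_0\gamma)$ for a constant $c_0=c_0(l)$.

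For $\mu_\varepsilon(A_1\cup A_2)$ I split $\int_0^{\tau_1^\varepsilon}=\int_0^{\sigma_1^\varepsilon}+\int_{\sigma_1^\varepsilon}^{\tau_1^\varepsilon}$. During $[0,\sigma_1^\varepsilon]$ started from $\partial g_j$, the process stays in $G_j$, so it contributes to $(K_i)_\delta$ only when $j=i$; this term is at most $\nu_\varepsilon(\partial g_i)\cdot\sup_yE_y^\varepsilon\sigma_1^\varepsilon$, which by Lemma~\ref{lem:invariantmeasureestimate} decays like $\exp(-\varepsilon^{-2}[(W(K_i)-\min_j W(K_j))-4(l-1)\gamma-\gamma/(2l)])$ and, since $W(K_i)>\min W$ for $i\notin I_0$, beats $Z_\varepsilon^{-1}$ once $\gamma$ is chosen small. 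During $[\sigma_1^\varepsilon,\tau_1^\varepsilon]$ the process lies in $G^c$, so a visit to $(K_i)_\delta$ must lie in the thin annulus $(K_i)_\delta\setminus G_i$; choosing $\rho_1$ close to $\delta$ and applying the LDP upper bound~\eqref{ldpu} to trajectories that linger there, together with Lemma~\ref{lem:uppertimeestimate} that forces quick return to $g$, bounds this contribution by an exponentially small constant. For $\mu_\varepsilon(A_3)$, split $A_3$ into a bounded part $A_3\cap\overline{B_0(R)}$ and a tail $A_3\cap B_0(R)^c$. On the tail, Assumption~\ref{mainassumption}~3) applied through the It\^o formula to $\tilde U$ against the stationary measure yields $\mu_\varepsilon(B_0(R)^c)\le\chi_\delta^{-1}\sup_{|x|=M}\tilde U(x)\cdot$(small as $R\to\infty$), uniformly in small $\varepsilon$. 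On the bounded part, Lemma~\ref{lem:uppertimeestimate} with $O=\bigcup_i g_i$ and $C=A_3\cap\overline{B_0(R)}$ gives $P_x^\varepsilon(\tau_O^\varepsilon>T)\le\exp(-\varepsilon^{-2}\alpha)$, and this translates via the representation into an exponentially small numerator that again dominates $Z_\varepsilon^{-1}$.

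The main obstacle is the second-segment (annular-passage) contribution to $\mu_\varepsilon(A_1\cup A_2)$: after every exit $\sigma_1^\varepsilon$ the process typically transits close to several $K_i$'s before re-entering $g$, and the crude bound $E_y^\varepsilon(\tau_1^\varepsilon-\sigma_1^\varepsilon)\le\beta$ is only $O(1)$ and thus comparable to $Z_\varepsilon$. Overcoming this requires (i) choosing $\rho_1$ very close to $\delta$ so that any trajectory remaining in $(K_i)_\delta\setminus G_i$ for more than a threshold time $T_\delta$ must deviate from the deterministic flow with action at least some $\theta_\delta>0$; (ii) applying~\eqref{ldpu} to get an $\exp(-\varepsilon^{-2}(\theta_\delta-\gamma))$ bound; and (iii) fixing $\gamma$ smaller than $\theta_\delta$, than $\min_{i\notin I_0}(W(K_i)-\min W)/(4l+1)$, and than the local-barrier tolerance $\gamma/(2l)$ of Lemma~\ref{lem:invariantmeasureestimate} simultaneously. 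This compatible choice of constants is the genuinely delicate point; once it is made, the three estimates combine to yield $\mu_\varepsilon(N_\delta^c)\to 0$, completing the proof.
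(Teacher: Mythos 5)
Your high-level framework matches the paper: both use the cycle representation $\mu_\varepsilon(A)=\int_{\partial g}E_y^\varepsilon\int_0^{\tau_1^\varepsilon}1_A\,dt\,\nu_\varepsilon(dy)$, split the excursion $\int_0^{\tau_1^\varepsilon}=\int_0^{\sigma_1^\varepsilon}+\int_{\sigma_1^\varepsilon}^{\tau_1^\varepsilon}$, estimate $\nu_\varepsilon(\partial g_i)$ via the Markov chain (Lemma~\ref{lem:invariantmeasureestimate}), and compare everything against a lower bound on the total mass coming from the $I_0$-summands. The first-segment estimate (with the observation that only $j=i$ contributes) and the logic of choosing $\gamma$ small relative to $\min_{i\notin I_0}W(K_i)-\min W$ are exactly right.

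However, there is a genuine gap in your treatment of the second-segment contribution, and it is not a minor bookkeeping issue. You cast the difficulty as a problem of \emph{lingering} in the thin annulus $(K_i)_\delta\setminus G_i$: pick $\rho_1$ close to $\delta$, so that staying there longer than $T_\delta$ costs action $\theta_\delta$. But lingering is not what needs to be controlled. The process contributes to $\mu_\varepsilon((K_i)_\delta)$ simply by \emph{passing through} the annulus, and a single fast crossing already costs no action. The crude bound you would get is ``$T_\delta \times P(\text{enter the annulus during the excursion})\times\nu_\varepsilon(\partial g_j)$''. For an excursion starting from $\partial g_j$ with $j\in I_0$, $\nu_\varepsilon(\partial g_j)$ is of order one, and $T_\delta$ is an $\varepsilon$-independent constant; so unless $P(\text{enter annulus})$ is shown to be exponentially small, the numerator is $O(1)$, which does not beat the lower bound $Z_\varepsilon\gtrsim e^{-\varepsilon^{-2}\gamma}$. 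The paper's proof supplies precisely this missing ingredient (its display~\eqref{invariantmeasureestimationofG_i5}): for $z\in\partial G_j$ with $K_j$ \emph{stable} and $j\neq i$, a trajectory that reaches $(G_i)_\delta$ before $\tau_1^\varepsilon$ within bounded time must have action $\ge 0.5\,\Delta$, where $\Delta=\min_{m\in I,\,n\in\mathcal L}V(K_m,K_n)$. This is why the paper introduces $\Delta$ (and chooses $\delta\le\delta_4$ so that the quasi-potential varies little on $(K_i)_{\delta_4}$). The quantity $\Delta$ never appears in your plan, so the stable-to-$(K_i)_\delta$ transition probability is left uncontrolled. For $j\notin I$ (unstable origin) you correctly fall back on $\nu_\varepsilon(\partial g_j)$ being small, but that case is the easy one.

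The same gap recurs in your handling of $A_3=\mathbb{R}^d\setminus\bigcup_i(K_i)_\delta$. You split into a tail (handled via the Lyapunov function $\tilde U$) and a bounded part, and for the bounded part invoke Lemma~\ref{lem:uppertimeestimate} (``quick return to $g$''). But quick return only bounds the \emph{duration} of a visit to $A_3$, not its \emph{probability} per excursion, so again it gives an $O(1)$ numerator when the originating $\partial g_j$ has $j\in I_0$. What is actually needed is the energy barrier: the paper's $\Upsilon_i=\min_{x\in G_i,y\in\partial(G_i)_\delta}V(x,y)$, bounded below via Assumption~\ref{mainassumption}~2) and Lemma~\ref{lem:lowerboundedofqp}, giving $P_z^\varepsilon(\tau^\varepsilon_{\mathbb{R}^d\setminus(G)_\delta}<\tau_1^\varepsilon)\le e^{-\varepsilon^{-2}0.4\Upsilon}$ for $z\in\partial G_i$ with $K_i$ stable (display~\eqref{invariantmeasureestimationofoutside2}). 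Your plan never links Assumption~\ref{mainassumption}~2) on $U$ (the crucial local-quadratic-growth condition at stable $K_i$'s) to a lower bound on the quasi-potential barrier that keeps excursions from leaving $(G_i)_\delta$; that link, through Lemma~\ref{lem:lowerboundedofqp}, is the load-bearing step. Also, the bounded/tail split you introduce for $A_3$ is unnecessary: Assumption~\ref{mainassumption}~3) already yields $\max_{z\in\partial G}E_z^\varepsilon\tau_1^\varepsilon\le\beta$ uniformly, which controls the non-compact tail inside the cycle representation without a separate It\^o-against-$\mu_\varepsilon$ argument.

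In short, the gap is that both of your key exponential-smallness claims (second-segment for $A_1\cup A_2$, bounded part of $A_3$) rest on ``lingering is expensive'' and ``return is fast'', whereas the correct mechanism is ``escaping a $\delta$-neighbourhood of a stable $K_j$ before $\tau_1^\varepsilon$ is rare''. You would need to introduce the barrier constants $\Delta$ and $\Upsilon$, prove (by a compactness/contradiction argument as in the paper's~\eqref{upper4}) the trajectory-level implication that separates such excursions from the low-action tube, and then apply~\eqref{ldpu}. Without these the proof does not close.
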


\begin{proof}
	For the sake of simplicity of notations, we write \eqref{con23} of Assumption \ref{mainassumption} in an equivalent way: For any stable $K_i$,
	there exist three common constants $\tilde{\delta} > 0, k_1 \geq 1, k_2>0 $ such that for any $\delta', \delta''$ admitting $0 <
	k_1^{\frac{1}{2}} \delta'' < \delta' < \tilde{\delta}$, we have 
	\begin{equation}\label{equivalent}
		\min_{x \in \partial (K_i)_{\delta'}, y \in \partial (K_i)_{\delta''}} U
		(y) - U (x) \geq k_2\left( (\delta')^2 - k_1 (\delta'')^2\right).
	\end{equation}
	
	Let $\Delta_{i j} = V (K_i, K_j)$ and $\Delta = \min_{i
		\in I, j \in \mathcal{L}} \Delta_{i j}$. $\Delta_{i j}
	> 0$ is because of the definition of the stable set. For every $K_i, i \in
	\mathcal{L}$, by the continuity of $V (x, y)$ in $\{ x : | x | < M
	\}$, there exists a $\delta_4 > 0$ such that for any $i \in \mathcal{L}$ and
	$x \in (K_i)_{\delta_4}$, we have $V (x, K_i) < \frac{1}{8} \Delta$ and $V
	(K_i, x) < \frac{1}{8} \Delta$.
	
	Let us fix a $\delta \in \left( 0, \frac{1}{3} (\delta_4 \wedge
	\tilde{\delta} \wedge \delta_1) \right)$. For some $\rho_1 \in (0,\delta)$ and some constant $c\in(0,1)$, we want to have:
	$$ \min_{x \in \partial (G_i)_{c\delta},\\ y \in \partial (G_i)_{\delta}} (U (y) - U (x)) > 0. $$
	Therefore, according to 2) of Assumption \ref{mainassumption}, we constrain $c<k_1^{-\frac{1}{2}}$ and $ \rho_1 < \frac{1-c}{2}\delta$.
	
	Under above constraints, let us divide
	$\mathbb{R}^d$ into $l + 1$ parts: $(G_1)_{\delta}$, $(G_2)_{\delta}$,
	{\textdots}, $(G_l)_{\delta}$ and $\mathbb{R}^d \backslash (G)_{\delta}$.
	For all stable $K_i$, let us denote $\Upsilon_i = \min_{x \in 
		G_i, y \in \partial (G_i)_{\delta}} V (x, y)$.
	$\Upsilon_i > 0$ is owing to the definition of the stable set. By 2) of Assumption \ref{mainassumption} and Lemma \ref{lem:lowerboundedofqp}, we have
	\begin{equation}\label{lowerenergyestimatationofstableset}
		\min_{i \in I} \Upsilon_i \geq \min_{x \in (G_i)_{c\delta}, y \in \partial (G_i)_{\delta}} \frac{2 \zeta}{{\bar{\lmd}}^2}(U (y) - U (x)) > \frac{2 \zeta k_2}{{\bar{\lmd}}^2}\left[ \left( \frac{3-c}{2}\right)^2 -k_1\left(\frac{1+c}{2} \right)^2 \right]\delta^2, 
	\end{equation}
	for any $\rho_1 \in (0, \frac{1-c}{2}\delta)$. We set 
	$$ \Upsilon = \frac{2 \zeta k_2}{{\bar{\lmd}}^2}\left[ \left( \frac{3-c}{2}\right)^2 -k_1\left(\frac{1+c}{2} \right)^2 \right]\delta^2
	$$
	and $\check{W} = \min_{i \nin I_0} W (K_i) - \min_{i \in \mathcal{L}} W (K_i)$.
	Let us choose an arbitrary fixed $\gamma \in \left( 0, \frac{1}{5} (
	\Upsilon\wedge \Delta \wedge \check{W}) \right)$.
	
	By Lemma \ref{lem:invariantmeasureestimate} and Lemma \ref{lem:invariantmeasureexist}, there exist proper
	$\rho_0, \rho_1, \rho_2, \varepsilon_0 $ and $\varepsilon_5$ satsifying $ \rho_1 < \frac{1-c}{2}\delta \wedge \delta_2, \rho_2 < \delta_3 $ and other restrictions of Proposition \ref{prop:transpb} such that we have
	\begin{equation}\label{invariantmeasureofz_n}
		\nu_{\varepsilon} (\partial g_i) \in \left( e \left( i, \varepsilon,
		\frac{l - 1}{l} \gamma \right), e \left( i, \varepsilon, - \frac{l - 1}{l}
		\gamma \right) \right),
	\end{equation}
	and the well-posedness of $\mu_{\varepsilon}$,
	for any $\varepsilon \in (0, \varepsilon_0 \wedge \varepsilon_5)$.
	
	Next, under the above restrictions, we will give some estimates of $\mu_\eps((G_i)_{\delta})$, $i\in\cl L$ and $\mu_\eps(\mathbb{R}^d \backslash (G)_{\delta})$:
	
	\textbf{Proof of the estimate of $ \mu_{\varepsilon} ((G_i)_{\delta}) $:}
	According to \eqref{invariantmeasurebounded}, because $\rho_1, \delta < \delta_1 $, we have 
	\begin{eqnarray}\label{invariantmeasureestimationofG_i}
		\mu_{\varepsilon} ((G_i)_{\delta}) & = & \int_{\partial g}
		E^{\varepsilon}_y \left(\int^{\tau^{\varepsilon}_1}_0 1_{(G_i)_{\delta}}
		(X^{\varepsilon}_t) d t\right) \nu_{\varepsilon} (d y) \nonumber\\
		& = & \int_{\partial g_i}\left( E^{\varepsilon}_y \sigma^{\varepsilon}_1\right)
		\nu_{\varepsilon} (d y) + \sum_{j = 1}^l \int_{\partial g_j}
		E^{\varepsilon}_y \left(\int^{\tau^{\varepsilon}_1}_{\sigma^{\varepsilon}_1}
		1_{(G_i)_{\delta}} (X^{\varepsilon}_t) d t\right) \nu_{\varepsilon} (d y),
	\end{eqnarray}
	for any $i\in \mathcal{L} $.
	By \eqref{invariantmeasureofz_n}, \eqref{recurrentexpectation1} and \eqref {recurrentexpectation2}, we have
	\begin{equation}\label{invariantmeasureestimationofG_i1}
		\int_{\partial g_i} \left(E^{\varepsilon}_y \sigma^{\varepsilon}_1\right)
		\nu_{\varepsilon} (d y) \in \left( e \left( i, \varepsilon, \frac{l -
			0.5}{l} \gamma \right), e \left( i, \varepsilon, - \frac{l - 0.5}{l}
		\gamma \right) \right)
	\end{equation}
	and $\max_{z \in \partial G}
	E^{\varepsilon}_z \tau^{\varepsilon}_1 \leq \beta$, for any $\varepsilon \in (0, \varepsilon_0 \wedge \varepsilon_5)$.
	
	For $j = i$, by the strong Markov property, we have
	\begin{eqnarray}\label{invariantmeasureestimationofG_i2}
		\int_{\partial g_i} E^{\varepsilon}_y
		\left(\int^{\tau^{\varepsilon}_1}_{\sigma^{\varepsilon}_1} 1_{(G_i)_{\delta}}
		(X^{\varepsilon}_t) d t\right) \nu_{\varepsilon} (d y) & = & \int_{\partial g_i}
		E^{\varepsilon}_y
		E_{X^{\varepsilon}_{\sigma^{\varepsilon}_1}}^{\varepsilon} \left(
		\int^{\tau^{\varepsilon}_1}_0 1_{(G_i)_{\delta}} (X^{\varepsilon}_t) d t
		\right) \nu_{\varepsilon} (d y) \nonumber\\
		& \leq & \max_{z \in \partial G_i} E^{\varepsilon}_z \tau^{\varepsilon}_1
		\cdot \nu_{\varepsilon} (\partial g_i) \nonumber\\
		& \leq & \beta e \left( i, \varepsilon, - \frac{l - 1}{l}
		\gamma \right),
	\end{eqnarray}
	for any $\varepsilon \in ( 0 , \varepsilon_5)$.
	
	For $j \neq i$, by the strong Markov property, we have
	\begin{eqnarray}\label{invariantmeasureestimationofG_i3}
		\int_{\partial g_j} E^{\varepsilon}_y
		\left(\int^{\tau^{\varepsilon}_1}_{\sigma^{\varepsilon}_1} 1_{(G_i)_{\delta}}
		(X^{\varepsilon}_t) d t\right) \nu_{\varepsilon} (d y) & = & \int_{\partial g_j}
		E^{\varepsilon}_y
		E_{X^{\varepsilon}_{\sigma^{\varepsilon}_1}}^{\varepsilon} \left(
		\int^{\tau^{\varepsilon}_1}_0 1_{(G_i)_{\delta}} (X^{\varepsilon}_t) d t
		\right) \nu_{\varepsilon} (d y) \nonumber\\
		& \leq & \max_{z \in \partial G_j} P^{\varepsilon}_z
		(\tau^{\varepsilon}_{(G_i)_{\delta}} < \tau^{\varepsilon}_1) \cdot \max_{z
			\in \partial G_j} E^{\varepsilon}_z \tau^{\varepsilon}_1 \cdot
		\nu_{\varepsilon} (\partial g_j) .
	\end{eqnarray}
	For any $z \in \partial G_j$ and $T > 0$, we have
	\begin{equation*}\label{invariantmeasureestimationofG_i4}
		P^{\varepsilon}_z (\tau^{\varepsilon}_{(G_i)_{\delta}} <
		\tau^{\varepsilon}_1) \leq P^{\varepsilon}_z
		(\tau^{\varepsilon}_{(G_i)_{\delta}} < \tau^{\varepsilon}_1 \leq T) +
		P^{\varepsilon}_z (\tau^{\varepsilon}_1 > T) .
	\end{equation*}
	According to Lemma \ref{lem:uppertimeestimate}, there exist $T_0, \varepsilon_6 > 0$ such
	that we have 
	\begin{equation}\label{eq:PGtau>T}
		P^{\varepsilon}_z (\tau^{\varepsilon}_1 > T_0) < \exp (-
		\varepsilon^{- 2} (W (K_j) - \min_i W (K_i) + \Delta))
	\end{equation}
	for any $z \in
	\partial G_j $ and $ \varepsilon \in (0, \varepsilon_6) $.
	
	Suppose that $K_j$ is a stable set,
	for the upper bound of $P^{\varepsilon}_z
	(\tau^{\varepsilon}_{(G_i)_{\delta}} < \tau^{\varepsilon}_1 \leq T_0)$, according to the choice of $\delta$  and \eqref{ldpu}, there exist $\theta_{ij}, \varepsilon^{ij}_7 > 0$ such that we have
	\begin{equation}\label{invariantmeasureestimationofG_i5}
		P^{\varepsilon}_z (\tau^{\varepsilon}_{(G_i)_{\delta}} <
		\tau^{\varepsilon}_1 \leq T_0) \leq P^{\varepsilon}_z (\{\ph:\rho_{0 T_0}
		(\ph, \Phi_z (0.5 \Delta)) > \theta_{ij}\}) \leq \exp (-
		\varepsilon^{- 2} (0.4 \Delta)),
	\end{equation}
	for any $z \in \partial G_j $ and $ \varepsilon \in (0, \varepsilon^{i j}_7)$.
	The existence of $\theta_{ij}$ can be proved by contradiction as the proof of \eqref{upper4}. We denote $\varepsilon_7 = \min_{i,j} \varepsilon_7^{i,j}$.
	Then by \eqref{invariantmeasureestimationofG_i3}, \eqref{eq:PGtau>T} and \eqref{invariantmeasureestimationofG_i5} we have
	\begin{equation}\label{invariantmeasureestimationofG_i6}
		\int_{\partial g_j} E^{\varepsilon}_y
		\left(\int^{\tau^{\varepsilon}_1}_{\sigma^{\varepsilon}_1} 1_{(G_i)_{\delta}}
		(X^{\varepsilon}_t) d t\right) \nu_{\varepsilon} (d y) < 2\beta \exp \left( -
		\varepsilon^{- 2} \left( W (K_j) - \min_i W (K_i) - \frac{l - 1}{l} \gamma
		+ 0.4 \Delta \right) \right), 
	\end{equation}
	for $j \neq i$ and $j \in I$.
	
	If $K_j$ is an unstable set, then by \eqref{invariantmeasureestimationofG_i3}, we have
	\begin{equation}\label{invariantmeasureestimationofG_i7}
		\int_{\partial g_j} E^{\varepsilon}_y
		\left(\int^{\tau^{\varepsilon}_1}_{\sigma^{\varepsilon}_1} 1_{(G_i)_{\delta}}
		(X^{\varepsilon}_t) d t \right) \nu_{\varepsilon} (d y) \leq \beta \exp \left( -
		\varepsilon^{- 2} \left( W (K_j) - \min_i W (K_i) - \frac{l - 1}{l} \gamma
		\right) \right),
	\end{equation}
	since  $P^{\varepsilon}_z(\tau^{\varepsilon}_{(G_i)_{\delta}} < \tau^{\varepsilon}_1)\le 1$.
	Equations \eqref{invariantmeasureestimationofG_i1}, \eqref{invariantmeasureestimationofG_i6} and \eqref{invariantmeasureestimationofG_i7} tell us that
	\begin{equation}\label{invariantmeasureestimationofG_i8}
		\mu_{\varepsilon} ((G_i)_{\delta}) \in (\exp (- \varepsilon^{- 2} \gamma),
		\exp (\varepsilon^{- 2} \gamma)),
	\end{equation}
	for any $K_i$, $i \in I_0$ and 
	\begin{equation}\label{invariantmeasureestimationofG_i9}
		\mu_{\varepsilon} ((G_i)_{\delta}) \leq \exp (- \varepsilon^{- 2}
		(\check{W} \wedge 0.4 \Delta - \gamma)),
	\end{equation}
	for any $K_i$, $i \in \mathcal{L} \backslash I_0$,   $\varepsilon \in (0, \varepsilon_0 \wedge \varepsilon_5 \wedge \varepsilon_6 \wedge \varepsilon_7) $.
	
	\textbf{Proof of the upper bound of $\mu_{\varepsilon}(\mathbb{R}^d
		\backslash (G)_{\delta})$:} In fact, we have
	\begin{eqnarray}\label{invariantmeasureestimationofoutside1}
		\mu_{\varepsilon} (\mathbb{R}^d \backslash (G)_{\delta}) & = &
		\int_{\partial g} E^{\varepsilon}_y \left(\int^{\tau^{\varepsilon}_1}_0 1_{\{
			\mathbb{R}^d \backslash (G)_{\delta} \}} (X^{\varepsilon}_t) d t\right)
		\nu_{\varepsilon} (d y) \nonumber\\
		& = & \sum_{i = 1}^l \int_{\partial g_i} E^{\varepsilon}_y
		E_{X^{\varepsilon}_{\sigma^{\varepsilon}_1}}^{\varepsilon} \left(
		\int^{\tau^{\varepsilon}_1}_0 1_{\{ \mathbb{R}^d \backslash (G)_{\delta}
			\}} (X^{\varepsilon}_t) d t \right) \nu_{\varepsilon} (d y) \nonumber\\
		& \leq & \sum_{i = 1}^l \max_{z \in \partial G_i} P^{\varepsilon}_z
		(\tau^{\varepsilon}_{\{ \mathbb{R}^d \backslash (G)_{\delta} \}} <
		\tau^{\varepsilon}_1) \cdot \max_{z \in \partial G_i} E^{\varepsilon}_z
		\tau^{\varepsilon}_1 \cdot \nu_{\varepsilon} (\partial g_i) .
	\end{eqnarray}
	Just like above, for any stable $K_i$, according to
	\eqref{lowerenergyestimatationofstableset}, Lemma \ref{lem:uppertimeestimate} and \eqref{ldpu}, there exist
	$\varepsilon_8^i, \theta_{i}^{'}, T > 0$ such that we have
	\begin{eqnarray}\label{invariantmeasureestimationofoutside2}
		P^{\varepsilon}_z (\tau^{\varepsilon}_{\{ \mathbb{R}^d \backslash
			(G)_{\delta} \}} < \tau^{\varepsilon}_1) & \leq & P^{\varepsilon}_z
		(\tau^{\varepsilon}_{\{ \mathbb{R}^d \backslash
			(G)_{\delta} \}} < \tau^{\varepsilon}_1 \leq T) + P^{\varepsilon}_z ( \tau^{\varepsilon}_1 > T)
		\nonumber\\
		& \leq & P^{\varepsilon}_z
		(\{\ph:\rho_{0 T} (\ph, \Phi_z (0.5 \Upsilon )) \geq \theta_i^{'}\}) + P^{\varepsilon}_z ( \tau^{\varepsilon}_1 > T)
		\nonumber\\
		& \leq & \exp (- \varepsilon^{- 2} (0.4 \Upsilon)) ,
	\end{eqnarray}
	for any $\varepsilon \in (0,\varepsilon_8^i)$ and $z \in \partial G_i$. 
	Moreover, for any unstable $K_i$ and $z \in \partial G_i$, we use
	$P^{\varepsilon}_z (\tau^{\varepsilon}_{\{ \mathbb{R}^d \backslash
		(G)_{\delta} \}} < \tau^{\varepsilon}_1) \leq 1$ directly. We denote $\varepsilon_8 = \min_{i\in I} \varepsilon_8^i$.
	
	Hence,
	\eqref{invariantmeasureestimationofoutside1} and \eqref{invariantmeasureestimationofoutside2} show 
	\begin{equation}\label{invariantmeasureestimationofoutside3}
		\mu_{\varepsilon} (\mathbb{R}^d \backslash (G)_{\delta}) \leq l \beta
		\exp (- \varepsilon^{- 2} (0.4 \Upsilon  \wedge \check{W} -
		\gamma)),
	\end{equation}
	for any $\varepsilon \in (0, \varepsilon_0 \wedge \varepsilon_5 \wedge \varepsilon_8)$.
	
	Finally, because $\gamma \in \left( 0, \frac{1}{5} ( \Upsilon \wedge \Delta \wedge
	\check{W}) \right)$, equations \eqref{invariantmeasureestimationofG_i8}, \eqref{invariantmeasureestimationofG_i9} and \eqref{invariantmeasureestimationofoutside3} show that $\mu_\eps$
	concentrates on $(G_i)_{\delta}$, $i \in I_0$ as $\varepsilon \downarrow 0$.
	Because we can choose $\delta$ and $\rho_1$ arbitrarily small, we have proven that $\mu$
	supports on $\cup_{i \in I_0} K_i$.
	
\end{proof}

\begin{rem}\label{rem:order}
	In the above proof, the order of choosing constants is extremely important. We firstly settle down $\delta$, secondly $\gamma$, then $\rho_0,\rho_1$ and $ \rho_2$, finally $\varepsilon$.
\end{rem}

According to i) in Proposition \ref{prop:LDP}, \eqref{1.1} can be restricted in $B_0^c(M)$. Thus, by Theorem 2.1 in \cite{chen}, $\mu$ is an invariant measure for \eqref{1.1} defined on $B_0^c(M)$ and supports on $\{x\in\mathbb{R}^d:x\in\omega(x)\}$. Here for \eqref{1.1}, $\omega(x)$ represents the $\omega$ limit set of $x$. Therefore, we have the following corollary. 

\begin{cor}\label{co:cap}
	Under conditions of Theorem \ref{thm:mian}, $\mu$ supports on 
	$\cup_{i \in I_0} K_i \cap \{x\in\mathbb{R}^d:x\in\omega(x)\}$.
\end{cor}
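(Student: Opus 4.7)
The plan is to obtain Corollary \ref{co:cap} by intersecting the support statement already established in Theorem \ref{thm:mian} with a Birkhoff-center inclusion coming from the deterministic dynamics. Theorem \ref{thm:mian} gives $\supp \mu \subseteq \cup_{i \in I_0} K_i$, so the only new ingredient needed is $\supp \mu \subseteq \{x \in \R^d : x \in \omega(x)\}$; intersecting these two inclusions yields the claim.

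To establish the Birkhoff-center inclusion, I would appeal to Theorem 2.1 of \cite{chen}, whose content is that any weak limit of invariant measures of small stochastic perturbations of a deterministic flow is itself an invariant measure of that flow, and is therefore supported on its Birkhoff center $\{x : x \in \omega(x)\}$. To apply that theorem in our setting I would verify its two running hypotheses: first, the deterministic flow \eqref{1.1} must be complete and have well-defined $\omega$-limit sets; second, $\mu$ must be a genuine probability measure. For the first, the Lyapunov inequality \eqref{ash2} together with the outward-gradient condition \eqref{ash3} on $B_0^c(M)$ forces $U$ to be non-increasing along trajectories of \eqref{1.1}, so no trajectory escapes to infinity and in fact every trajectory eventually enters and stays in a bounded set containing $\overline{B_0(M)}$. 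For the second, the tightness of $\{\mu_\eps\}$ noted just before Lemma \ref{lem:qponcompactset} (with proof given in \cite{chen}) ensures that any weak subsequential limit $\mu$ is a probability measure.

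The main obstacle is the noncompactness of $\R^d$, since the Chen result is naturally formulated for settings where the deterministic dynamics is confined to a compact region. The workaround is exactly the reduction afforded by condition i) of Proposition \ref{prop:LDP}: combined with 1) of Assumption \ref{mainassumption}, which places $\cup_{i \in \cl L} K_i$ inside $B_0(M)$, the Lyapunov structure allows one to treat the effective dynamics on a large closed ball instead of on all of $\R^d$, and the hypotheses of Theorem 2.1 of \cite{chen} become verifiable there. Once both $\supp \mu \subseteq \cup_{i \in I_0} K_i$ (from Theorem \ref{thm:mian}) and $\supp \mu \subseteq \{x : x \in \omega(x)\}$ (from \cite{chen}) are in hand, taking their intersection gives $\supp \mu \subseteq \cup_{i \in I_0} K_i \cap \{x : x \in \omega(x)\}$, which is the statement of the corollary.
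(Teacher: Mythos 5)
Your proposal is correct and takes essentially the same route as the paper: apply Theorem \ref{thm:mian} for $\supp\mu\subseteq\cup_{i\in I_0}K_i$, invoke Theorem~2.1 of \cite{chen} (made applicable by the Lyapunov condition i) of Proposition \ref{prop:LDP}, which confines the deterministic dynamics to a bounded region, together with the tightness of $\{\mu_\eps\}$) to place $\supp\mu$ inside the Birkhoff center $\{x:x\in\omega(x)\}$, and then intersect. The paper states this in one line before the corollary; your added checks of Chen's two running hypotheses are consistent with it and do not change the argument.
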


An important application of Theorem \ref{thm:mian} is the quasi-gradient
system with stochastic perturbations. Let us consider the following SDE:
\begin{equation}\label{qgsystem}
	\left\{ \begin{array}{cl}
		d X^{\varepsilon}_t &= (- \nabla U (X^{\varepsilon}_t) + H
		(X^{\varepsilon}_t)) d t + \varepsilon d W_t,\\
		X^{\varepsilon}_0 &= x_0 .
	\end{array} \right. 
\end{equation}
Here we need three conditions: $U \in C^2 (\mathbb{R}^d ; \mathbb{R}), H \in C \left( {\mathbb{R}^d}  ;
\mathbb{R}^d \right)$ and $\left\langle \nabla U (x), H (x)\right\rangle=0$ for any $x \in
\mathbb{R}^d$. The next corollary comes from Theorem \ref{thm:mian}.

\begin{cor}\label{co:qgsystem}
	Suppose that following conditions are true:
	\begin{trivlist}
		\item[i).] There exist two positive constants $M,
		\kappa$ such that
		
		$$\nabla U(x) \cdot \frac{x}{|x|} \geq \kappa,\quad
		\forall x \in B_0^c(M).$$ 
		
		\item[ii).] There is a finite number of compact
		equivalent sets $K_1, K_2, \ldots, K_l$, all of which are contained in $B_0(M)$, satisfying $x \nsim y$ for $x \in K_i, y \in K^c_i, \forall \ i \in
		\mathcal{L}$. Furthermore, every $\omega$-limit set of \eqref{1.1} is
		contained entirely in one of $K_i,\; i \in \mathcal{L}$.
		
		\item[iii).]
		For any $x \in \mathbb{R}^d \backslash \cup_{i \in \mathcal{L}}
		K_i$, $\nabla U (x) \neq 0$. 
		Moreover, 
		there exists a constant $\tilde{\delta} > 0$ and two constants
		$k_1 \ge k_2>0 $  such that for any $\delta', \delta''$ with
		$0 < \left(\frac{k_1}{k_2}\right)^{\frac{1}{2} }\delta'' < \delta' < \tilde{\delta}$,  we have
		\begin{equation}\nonumber
			\min_{x \in \partial (K_i)_{\delta''}, y \in \partial (K_i)_{\delta'}} U
			(y) - U (x) \geq k_2 (\delta')^2 - k_1 (\delta'')^2,
		\end{equation}
		for any stable $K_i, i \in \mathcal{L}$.
		\item[iv).] There exist two positive constants $\chi$ and $\varepsilon_1$ satisfying
		\begin{equation}\nonumber
			\frac{\varepsilon^2}{2} \Delta U(x) -\zeta \left|\nabla U(x) \right|^2 < -
			\chi,
		\end{equation}
		for any $\varepsilon\in \left( 0, \varepsilon_1\right) $ and $ x \in B_0^c(M)$. 
	\end{trivlist}

	Then $\mu$ of \eqref{qgsystem}
	supports on $\cup_{i \in I_0} K_i$.
\end{cor}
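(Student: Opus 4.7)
The plan is to verify each hypothesis of Theorem \ref{thm:mian} for the quasi-gradient system \eqref{qgsystem}, for which $b(x)=-\nabla U(x)+H(x)$, $\sigma(x)\equiv I_d$, and $a_{ij}(x)=\delta_{ij}$. Once all hypotheses are checked, the conclusion is immediate from the theorem.

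First I would verify the two conditions of Proposition \ref{prop:LDP}. For condition i), the orthogonality hypothesis $\langle\nabla U(x),H(x)\rangle=0$ gives
\[
\nabla U(x)\cdot b(x) = -|\nabla U(x)|^2+\langle\nabla U(x),H(x)\rangle = -|\nabla U(x)|^2,
\]
so \eqref{ash2} holds with $\zeta=1$; estimate \eqref{ash3} is exactly hypothesis i) of the corollary. Condition ii) of Proposition \ref{prop:LDP} is trivial since $\sigma\sigma^T=I_d$ has all eigenvalues equal to $1$, giving $\bar\lambda=\underline\lambda=1$. Note also that hypothesis i) forces $U(x)\to+\infty$ as $|x|\to+\infty$ (integrating the radial bound along rays), so $U$ is an admissible Lyapunov function for the role of $\tilde U$.

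Next I would verify Assumption \ref{mainassumption}. Part 1) is precisely hypothesis ii) of the corollary, and part 2) is precisely hypothesis iii). For part 3), I would invoke Remark \ref{rem:con2simplification}: with the choice $\tilde U=U$ (which lies in $C^2(\mathbb{R}^d;\mathbb{R}^+)$ by assumption and tends to $+\infty$ at infinity as noted), the expression in \eqref{con25} becomes
\[
\frac{\varepsilon^2}{2}\Delta U(x)+\nabla U(x)\cdot b(x) = \frac{\varepsilon^2}{2}\Delta U(x)-|\nabla U(x)|^2,
\]
which is controlled by hypothesis iv) of the corollary (with $\zeta=1$) uniformly in $x\in B_0^c(M)$, hence uniformly outside any $\cup_i(K_i)_\delta$ containing $B_0(M)$ after shrinking $\chi$ if necessary. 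The same computation simultaneously verifies condition iii) of Proposition \ref{prop:LDP}.

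Having matched every hypothesis, Theorem \ref{thm:mian} applies and yields that $\mu$ is supported on $\cup_{i\in I_0}K_i$. There is no real obstacle: the only non-bookkeeping step is the short calculation exploiting $\langle\nabla U,H\rangle=0$ to produce both the dissipativity in \eqref{ash2} and the drift contribution in \eqref{con25}; everything else is a direct translation between the corollary's hypotheses and the general theorem's hypotheses.
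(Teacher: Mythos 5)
Your proposal correctly reduces the corollary to Theorem \ref{thm:mian} by matching hypotheses, which is precisely what the paper intends (the paper gives no detail beyond the one-line remark that the corollary ``comes from Theorem \ref{thm:mian}''). The substantive checks are all right: $\nabla U\cdot b=-|\nabla U|^2$ via orthogonality gives i) of Proposition \ref{prop:LDP} with $\zeta=1$; $\sigma=I_d$ trivially gives ii); conditions 1) and 2) of Assumption \ref{mainassumption} are verbatim the corollary's ii) and iii); and condition 3) is supplied by hypothesis iv) through Remark \ref{rem:con2simplification} with $\tilde U=U$. Two small points. First, Proposition \ref{prop:LDP}~i) asks for $U\in C^1(\mathbb{R}^d;\mathbb{R}^+)$, while the corollary only posits $U\in C^2(\mathbb{R}^d;\mathbb{R})$; one should note that hypothesis~i) forces $U$ bounded below, so the sign can be arranged by adding a constant without affecting anything. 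Second, your parenthetical explanation of \emph{why} Remark \ref{rem:con2simplification} works --- ``hence uniformly outside any $\cup_i(K_i)_\delta$ containing $B_0(M)$'' --- is backwards: condition 3) of Assumption \ref{mainassumption} is required for \emph{small} $\delta$, where $\cup_i(K_i)_\delta$ does \emph{not} contain $B_0(M)$ and the set $\mathbb{R}^d\setminus\cup_i(K_i)_\delta$ strictly exceeds $B_0^c(M)$. The actual mechanism behind the remark is that on the compact annular region $\overline{B_0(M)}\setminus\cup_i(K_i)_\delta$, the hypothesis $\nabla U\neq 0$ forces $\inf|\nabla U|^2>0$ while $\Delta U$ is bounded, so the left side is $<-\chi_\delta$ for $\varepsilon$ small; that argument, not ball containment, is what lets hypothesis~iv) on $B_0^c(M)$ upgrade to condition 3) on all of $\mathbb{R}^d\setminus\cup_i(K_i)_\delta$. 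Since you defer to the remark the final conclusion is sound, but the inline rationale as written would not survive on its own.
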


\medskip

\section{The large deviations of $\{\mu_\eps\}$}

In this section, we study the rate of convergence of $\{\mu_\eps\}$. We first proof the LDP of $\{\mu_\eps\}$ and then give an estimate of the rate function. In this chapter, we assume the conditions of Theorem \ref{thm:mian} are satisfied.   

\begin{thm}\label{ldp}
	$\{\mu_\eps\}$ satisfies the large deviations principle with good rate function $\mathcal{S}(x)= \min_{i \in \mathcal{L}}(W(K_i)+V(K_i,x))-\min_{i \in \mathcal{L}} W(K_i)$ as $\varepsilon \downarrow 0$.
\end{thm}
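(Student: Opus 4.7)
The plan is to derive both LDP bounds from the representation
\[\mu_\eps(A)=\int_{\partial g}E^\eps_y\int_0^{\tau^\eps_1}1_A(X^\eps_t)\,dt\,\nu_\eps(dy)\]
supplied by Lemma \ref{lem:invariantmeasureexist}, together with the two-sided estimate $\nu_\eps(\partial g_i)\in(e(i,\eps,4(l-1)\gamma),e(i,\eps,-4(l-1)\gamma))$ from Lemma \ref{lem:invariantmeasureestimate} and the uniform LDP for $\{P^\eps_x\}$ from Proposition \ref{prop:LDP}. First I would verify that $\mathcal{S}$ is a good rate function: by Lemma \ref{lem:qpcontinuous} the quasi-potential is locally Lipschitz in its second argument, hence $\mathcal{S}$ is continuous, and by Corollary \ref{cor:infinitycost} $V(K_i,x)\to+\infty$ as $|x|\to+\infty$; therefore $\{\mathcal{S}\leq s\}$ is closed and bounded, hence compact.

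For the upper bound on a closed set $C\subset\R^d$, fix small $\gamma>0$ and tune $\rho_0,\rho_1,\rho_2$ as in the proof of Theorem \ref{thm:mian}. Arguing as in \eqref{invariantmeasureestimationofG_i3},
\[\mu_\eps(C)\leq\sum_{i=1}^l\nu_\eps(\partial g_i)\cdot\max_{y\in\partial g_i}P^\eps_y(\tau^\eps_C<\tau^\eps_1)\cdot\max_{z\in\partial G_i}E^\eps_z\tau^\eps_1.\]
The first factor is $\leq e(i,\eps,-4(l-1)\gamma)$, and the third is bounded by the constant $\beta$ from \eqref{recurrentexpectation2}. For the middle factor I would split $\{\tau^\eps_C<\tau^\eps_1\}$ at a deterministic horizon $T$: Lemma \ref{lem:uppertimeestimate} renders $P^\eps_y(\tau^\eps_1>T)$ negligible, and on $\{\tau^\eps_C<\tau^\eps_1\leq T\}$ a contradiction argument parallel to \eqref{upper4} produces a $\delta>0$ so that any such trajectory is $\delta$-far from $\Phi_y(V(K_i,C)-\gamma)$; then \eqref{ldpu} yields $\max_{y\in\partial g_i}P^\eps_y(\tau^\eps_C<\tau^\eps_1)\leq\exp(-\eps^{-2}(V(K_i,C)-2\gamma))$. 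Summing over $i\in\mathcal{L}$ and letting $\gamma\downarrow 0$ gives $\limsup_{\eps\downarrow 0}\eps^2\log\mu_\eps(C)\leq-\inf_{x\in C}\mathcal{S}(x)$.

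For the lower bound on an open set $U$, fix $x\in U$ and pick $i^*\in\mathcal{L}$ attaining $\min_i(W(K_i)+V(K_i,x))$. Combining Lemma \ref{lem:qpcontinuous} and Lemma \ref{lem:fw2} with the definition of $V$, I would concatenate a trajectory $\ph\in C([0,T+\delta];\R^d)$ running from a point of $\partial g_{i^*}$ through $K_{i^*}$ to $x$ and then held within $U$ on $[T,T+\delta]$, with $S_{0,T+\delta}(\ph)\leq V(K_{i^*},x)+\gamma$. By \eqref{ldpl} the event $\{\rho_{0,T+\delta}(X^\eps_\cdot,\ph)<\tilde\delta\}$ has probability at least $\exp(-\eps^{-2}(V(K_{i^*},x)+2\gamma))$ for small $\tilde\delta$, and on this event $\int_0^{\tau^\eps_1}1_U(X^\eps_t)\,dt\geq\delta$. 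Combined with $\nu_\eps(\partial g_{i^*})\geq e(i^*,\eps,4(l-1)\gamma)$, this delivers $\liminf_{\eps\downarrow 0}\eps^2\log\mu_\eps(U)\geq-\mathcal{S}(x)-O(\gamma)$; optimizing over $x\in U$ and sending $\gamma\downarrow 0$ closes the lower bound.

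The main obstacle is the routing issue in the lower bound: the cheapest path from $K_{i^*}$ to $x$ may pass through another equivalent set $K_l$, in which case the process would touch $\partial g$ at $K_l$ and terminate $\tau^\eps_1$ prematurely, breaking the single-excursion construction. The resolution is a telescoping argument: from $W(K_l)\leq W(K_{i^*})+\tilde V(K_{i^*},K_l)\leq W(K_{i^*})+V(K_{i^*},K_l)$ and $V(K_{i^*},x)\leq V(K_{i^*},K_l)+V(K_l,x)$ one obtains $W(K_l)+V(K_l,x)\leq W(K_{i^*})+V(K_{i^*},x)$, so the minimality of $i^*$ allows us to replace $i^*$ by $l$ without increasing $\mathcal{S}(x)$; iterating, we may assume that the optimal path from $K_{i^*}$ to $x$ avoids all other $K_l$. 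This same mechanism also explains why $\mathcal{S}$ features $V$ (rather than $\tilde V$) in the final leg from $K_{i^*}$ to $x$, while $\tilde V$ enters only implicitly via the optimal $\{i\}$-graph defining $W(K_{i^*})$.
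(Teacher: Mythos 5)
Your upper bound follows the paper's route almost exactly: decompose $\mu_\eps$ along the Markov-chain excursions, use Lemma~\ref{lem:uppertimeestimate} to kill the tail of $\tau^\eps_1$, and invoke \eqref{ldpu} via a contradiction argument modelled on \eqref{upper4}. Your lower bound, however, takes a genuinely different route. The paper adjoins $\{x\}$ to the list of equivalent sets as an artificial $K_{l+1}$, records $W(K_{l+1})=\min_i(W(K_i)+V(K_i,x))$ from Lemma~6.4.3 in Freidlin--Wentzell, and then reads off $\mu_\eps$ near $x$ directly from $\nu_\eps(\partial g_{l+1})$ via Lemma~\ref{lem:invariantmeasureestimate} and $E^\eps_y\sigma^\eps_1$; all routing through other $K_j$'s is absorbed into the $\{i\}$-graph machinery of Proposition~\ref{prop:transpb}. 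You instead build a single explicit excursion from $\partial g_{i^*}$ through $K_{i^*}$ to $x$, apply \eqref{ldpl}, and pay for routing with a telescoping argument on the identity $W(K_l)+V(K_l,x)\le W(K_{i^*})+V(K_{i^*},x)$. This is a legitimate and more hands-on alternative, and you have correctly diagnosed the routing obstruction. What the paper's black-box approach buys is insulation from the stopping-time bookkeeping; what yours buys is transparency about where the cost $W(K_{i^*})+V(K_{i^*},x)$ actually comes from.

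One point you should make watertight: on the event $\{\rho_{0,T+\delta}(X^\eps_\cdot,\varphi)<\tilde\delta\}$ you need $\tau^\eps_1>T+\delta$ in order to conclude $\int_0^{\tau^\eps_1}\mathbf 1_U\,dt\ge\delta$. By definition $\tau^\eps_1$ fires at the first return to $\partial g$ \emph{after} $\sigma^\eps_1$, and $\sigma^\eps_1$ may trigger while $\varphi$ is still well inside the annulus $G_{i^*}\setminus\overline{g_{i^*}}$ (the process need only be within $\tilde\delta$ of $\varphi$). A trajectory that crosses $\partial G_{i^*}$ slightly early can, while remaining $\tilde\delta$-close to $\varphi$, drift back and touch $\partial g_{i^*}$, causing $\tau^\eps_1$ to fire prematurely. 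To rule this out you must choose $\varphi$ so that its outbound leg traverses the annulus $G_{i^*}\setminus g_{i^*}$ in a uniformly expanding way (e.g.\ monotone in $|\varphi_t - K_{i^*}|$ on that segment) and then take $\tilde\delta<\rho_1-\rho_2$, so that once $\varphi$ has left a $\tilde\delta$-collar of $\partial G_{i^*}$ the process cannot re-enter $g$. The same care is needed along the leg from $K_{i^*}$ to $x$: your telescoping argument ensures the \emph{optimal} path avoids the other $K_m$, but you are following a nearly optimal path with cost $\le V(K_{i^*},x)+\gamma$, which a priori might still graze $(K_m)_{\rho_2}$; Lemma~\ref{lem:fw1} is the tool that lets you deform such a path off a $\beta$-neighbourhood of $\bigcup_{m\ne i^*}K_m$ at $O(\gamma)$ extra cost, and it should be invoked explicitly here.
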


\begin{proof}
	Before the proof, we mention that for any $x\in\mathbb{R}^d$, it can be viewed as an equivalent set $K_{l+1}$. According to Lemma 6.4.3 in \cite{fw}, we have
	$$W(K_{l+1})=\min_{i \in \mathcal{L}}(W(K_i)+V(K_i,x)).$$
	We prove this theorem in three steps.
	
	First, we prove the level set of $\mathcal{S}(x)$ is compact. According to the definition of $\mathcal{S}$ and the continuity of quasi-potential,  $ \mathcal{S}(x) $ is a continuous function. Moreover, for any $\alpha \in (0, +\infty)$, we have $ \{ \mathcal{S}(x) \leq \alpha \} \subset \cup_{i \in \mathcal{L}} \{ x: V(K_i,x) \leq \alpha \}$. Thus, by \eqref{ash6}, Lemma \ref{lem:lowerboundedofqp} and the continuity of $\mathcal{S}(x)$, the level sets of $\mathcal{S}$ is compact.
	
	Second, we prove the lower estimate of LDP. For any fixed $ x \in \mathbb{R}^d$ and $\delta, \gamma \in (0,+\infty)$, according to the proof of \eqref{invariantmeasureestimationofG_i1}, there exist proper $\rho_2,\varepsilon_0 \in (0, +\infty)$ such that we have $g_{l+1} \subset (K_{l+1})_\delta$ and 
	\begin{eqnarray}\label{mulowerldp}
		\mu_{\varepsilon}((x)_\delta) & \geq & \mu_{\varepsilon} (g_{l+1})\nonumber \\
		& = & \int_{\partial g}
		E^{\varepsilon}_y \left(\int^{\tau^{\varepsilon}_1}_0 1_{g_{l+1}}
		(X^{\varepsilon}_t) d t\right) \nu_{\varepsilon} (d y) \nonumber\\
		& = & \int_{\partial g_{l+1}} \left( E^{\varepsilon}_y \sigma^{\varepsilon}_1\right)
		\nu_{\varepsilon} (d y) + \sum_{j = 1}^{l+1} \int_{\partial g_j}
		E^{\varepsilon}_y \left(\int^{\tau^{\varepsilon}_1}_{\sigma^{\varepsilon}_1}
		1_{g_{l+1}} (X^{\varepsilon}_t) d t\right) \nu_{\varepsilon} (d y)\nonumber\\
		& = & \int_{\partial g_{l+1}} \left( E^{\varepsilon}_y \sigma^{\varepsilon}_1 \right)
		\nu_{\varepsilon} (d y) \nonumber\\
		& \geq & \exp\left(-\varepsilon^{-2} \left(\min_{i \in \mathcal{L}}\left(W(K_i)+V(K_i,x)\right)-\min_{i \in \mathcal{L}} W(K_i)+ \frac{l + 0.5}{l+1} \gamma \right) \right)\nonumber\\
		& \geq & \exp(-\varepsilon^{-2}(\mathcal{S}(x)+\gamma)), 
	\end{eqnarray}
	for any $\varepsilon \in (0,\varepsilon_0)$.

	Finally, we need to prove for any fixed $ x \in \mathbb{R}^d$,  $ s\in[0,+\infty) $ and $\delta, \gamma \in (0,+\infty)$, there exists an $\varepsilon_0 \in (0, +\infty)$ satisfying 
	\begin{equation}{\label{muupperldp}}
		\mu_\varepsilon(\{x\in\mathbb{R}^d:\rho\left(x, \Phi(s) \right) \geq \delta\}) \leq \exp (-\varepsilon^{-2}(s-\gamma))
	\end{equation}
	for any $\varepsilon \in (0, \varepsilon_0)$. Here, we set $\Phi(s)=\{x\in\mathbb{R} : \mathcal{S}(x)\leq s \}$.
	
	\textbf{$s=0$:} \eqref{muupperldp} is clearly true.
	
	\textbf{$s>0$:} We choose a $ \rho_1 \in (0,\delta) $ satisfied $ \max_{i\in \mathcal{L}} V(K_i,\partial G_i)  <\frac{\gamma}{4}$. By Lemma \ref{lem:invariantmeasureexist}, there exist proper $\rho_1,\rho_2,\beta,\varepsilon' \in (0,+\infty)$ such that we have $\max_{z\in\partial G_i}E_z^{\varepsilon}\tau_1^{\varepsilon} <\beta$ and 
	\begin{eqnarray}\label{muupperldp1}
		\mu_\varepsilon(\{x\in\mathbb{R}^d:\rho\left(x, \Phi(s) \right) \geq \delta\}) &=& \int_{\partial g} E^{\varepsilon}_y
		\int^{\tau^{\varepsilon}_1}_0 1_{\{x\in\mathbb{R}^d:\rho\left(x, \Phi(s) \right) \geq \delta\}} (X^{\varepsilon}_t) d t
		\nu_{\varepsilon} (d y) \nonumber\\
		&=& \sum_{i\in \mathcal{L}}\int_{\partial g_i} E^{\varepsilon}_y
		\int^{\tau^{\varepsilon}_1}_{\sigma^{\varepsilon}_1} 1_{\{x\in\mathbb{R}^d:\rho\left(x, \Phi(s) \right) \geq \delta\}} (X^{\varepsilon}_t) d t
		\nu_{\varepsilon} (dy) \nonumber\\
		& \leq & \sum_{i\in \mathcal{L}} \max_{z\in\partial G_i}P_z^{\varepsilon}\left(\tau_{\{x\in\mathbb{R}^d:\rho\left(x, \Phi(s) \right) \geq \delta\}}^{\varepsilon}\leq\tau_1^{\varepsilon}\right) \cdot \max_{z \in \partial G_i} E_z^{\varepsilon}\tau_1^{\varepsilon}\cdot \nu_{\varepsilon}(\partial g_i)
	\end{eqnarray}
	for any $\varepsilon\in(0,\varepsilon')$.
	
	For any $i \in \mathcal{L}$, according to the definition of $\mathcal{S}(x) $ and the proof of \eqref{invariantmeasureestimationofG_i5}, we have 
	$$V(K_i,x) \geq s + \min_{i \in \mathcal{L}} W(K_i)- W(K_i)$$
	and there exists an $\varepsilon '' \in (0, +\infty)$ satisfied
	\begin{equation}\label{muupperldp2}
		\max_{z\in\partial G_i}P_z^{\varepsilon}\left(\tau_{\{x\in\mathbb{R}^d:\rho\left(x, \Phi(s) \right) \geq \delta\}}^{\varepsilon}\leq\tau_1^{\varepsilon}\right) \leq \exp\left(-\varepsilon^{-2}\left(s + \min_{i \in \mathcal{L}} W(K_i)- W(K_i)-\frac{\gamma}{3}\right)\right),
	\end{equation}
	for any $\varepsilon \in (0,\varepsilon'')$.
	
	Hence, combining \eqref{muupperldp1} with \eqref{muupperldp2}, we have
	\begin{equation*}
		\mu_\varepsilon(\{x\in\mathbb{R}^d:\rho\left(x, \Phi(s) \right) \geq \delta\})\leq l \beta \exp\left(-\varepsilon^{-2}\left(s-\frac{2\gamma}{3}\right)\right),
	\end{equation*}
	for any $\varepsilon \in (0, \varepsilon ' \wedge \varepsilon '') $, which implies \eqref{muupperldp}.

\end{proof}

\begin{rem}\label{rem:estimateofS}
	According to the proof of \eqref{ash6} and Lemma \ref{lem:lowerboundedofqp}, for any $x \in B_0^c(M) $, we can give a lower bound of $\mathcal{S}(x)$:
	\begin{eqnarray*}
		\mathcal{S}(x) &\geq& \min_{i \in \mathcal{L}} W(K_i) + \min_{i \in \mathcal{L}} V(K_i, x) -\min_{i \in \mathcal{L}} W(K_i)\nonumber\\
		&\geq& \min_{i\in\mathcal{L},y \in \partial B_0(M) } V(K_i,y) + \min_{y \in \partial B_0(M) } V(y,x)\nonumber\\
		&\geq& \min_{i\in\mathcal{L},y \in \partial B_0(M) } V(K_i,y) + \frac{2\zeta\kappa}{{\bar{\lmd}}^2}(|x|-M).
	\end{eqnarray*} 
\end{rem}

\medskip

\section{Examples and Numerical Simulations}

To show our conclusion more intuitive, we will give some specific examples with theoretical analyses and
numerical simulations. These examples are organized from simple to complex. All
the numerical simulations are obtained by the tamed-Euler method provided in {\cite{ns}}.

\begin{example}\label{gsystem}
A Gradient System:
\end{example}

Let us consider the gradient system with Brownian perturbations in $\mathbb{R}^2$:
\[ \left\{ \begin{array}{cl}
     \left( \begin{array}{l}
       d X^{\varepsilon}_t\\
       d Y^{\varepsilon}_t
     \end{array} \right) &= - \left( \begin{array}{c}
       (X^{\varepsilon}_t)^3 - X^{\varepsilon}_t\\
       Y^{\varepsilon}_t
     \end{array} \right) d t + \varepsilon \left( \begin{array}{l}
       d W^{(1)}_t\\
       d W^{(2)}_t
     \end{array} \right),\\
     \left( \begin{array}{l}
       X^{\varepsilon}_0\\
       Y^{\varepsilon}_0
     \end{array} \right) &= \left( \begin{array}{l}
       x_0\\
       y_0
     \end{array} \right) .
   \end{array} \right. \]
Here we choose $(x_0, y_0)^T$ arbitrarily in $\mathbb{R}^2$ and $J (x, y) = \frac{x^4}{4} + \frac{y^2}{2} - \frac{x^2}{2} + 1$.  Figure \ref{fig:gradient13} is the phase graph of the corresponding deterministic equation.  

{\tmstrong{Theoretical analysis:}} This system has three equivalent sets: $K_1=(0,0),\ K_2=(-1,0),\ K_3=(1,0)$. According to Corollary \ref{rem:howtogetstable}, $K_2, K_3$ are stable. By Proposition \ref{prop:sstoos}, $K_1$ is unstable. 
In this special case, from the stationary Fokker-Planck equation of this example, we have
\[ \mu_{\varepsilon}(dxdy) = \left( \int_{\mathbb{R}^2} \exp \left( - \frac{2 J (x,
   y)}{\varepsilon^2} \right) d x d y \right)^{- 1} \exp \left( - \frac{2 J
   (x, y)}{\varepsilon^2} \right)dxdy. \]
By using the above explicit solution of this example, it is not difficult to check
$\mu = \frac{1}{2} \delta_{(- 1, 0)} + \frac{1}{2} \delta_{(1, 0)}$.

The numerical simulation of this system supports the theoretical analysis. 

{\tmstrong{Numerical Simulation:}} Figure \ref{fig:gradientsys} shows the distribution of the numerical solution with $(x_0, y_0) =
(0, 0)$, $\varepsilon = 0.01$ and time $T = 10000$. In this experiment, the
numerical solution is obtained by choosing step size $h = 0.01$ with $100$ samples.

\begin{center}
\tmfloat{h}{small}{figure}{\raisebox{0.0\height}{\includegraphics[width=5.2cm,height=4.0cm]{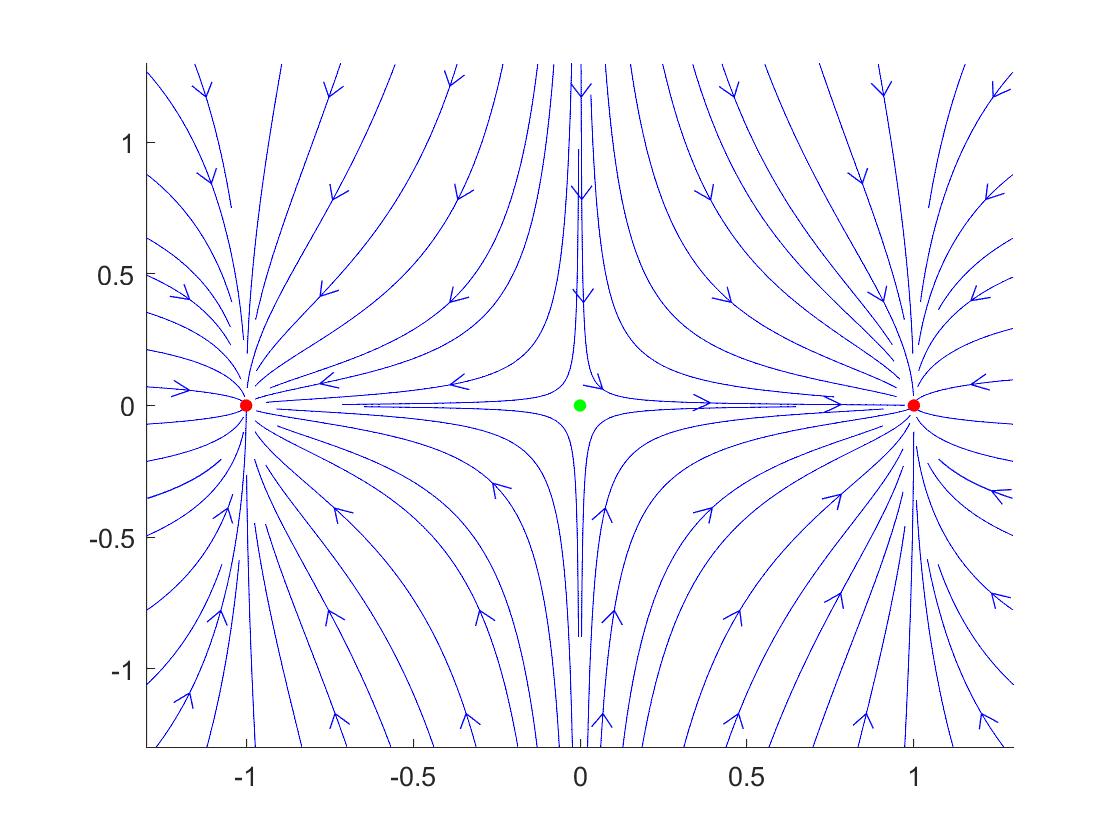}}}{\label{fig:gradient13}}
\tmfloat{h}{small}{figure}{\raisebox{0.0\height}{\includegraphics[width=5.2cm,height=4.0cm]{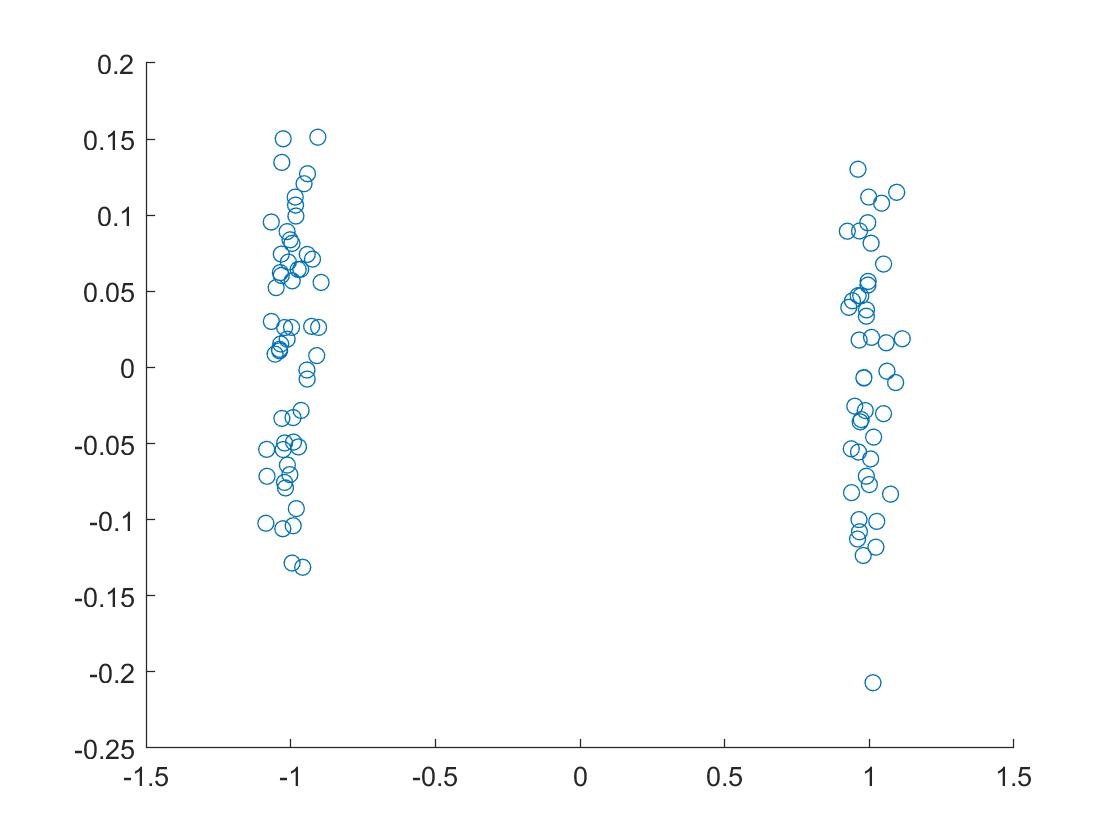}}}{\label{fig:gradientsys}}
\end{center}

\begin{rem}\label{rem:difficultreason}
	In this example, the explicit solution tells us that the $\min W(K_i)$ must be attached at $K_2, K_3$. But in general, it seems very difficult to show which equivalent sets can attach the $\min W(K_i)$, because it is not easy to give a proper upper estimate of $V(K_i, K_j)$.  
\end{rem}

\bigskip

The following example comes from \cite{chen}, we will explain this example in our view.
\begin{example}\label{Bernoulli}
	Stochastic Bernoulli Equation:
\end{example}
Let us set 
$$ 
O(x,y)=(x^2+y^2)^2-4(x^2-y^2), \quad U(O)=\frac{O^2}{2(1+O^2)^{\frac{3}{4}}}, \quad   \Theta(O)=\frac{O}{(1+O^2)^{\frac{3}{8}}}.
$$
It can be checked that the following SDE:
\[ \left\{ \begin{array}{cl}
	\left( \begin{array}{l}
		d X^{\varepsilon}_t\\
		d Y^{\varepsilon}_t
	\end{array} \right) &= \left( - \left( \begin{array}{c}
		\partial_x U (X^{\varepsilon}_t, Y^{\varepsilon}_t)\\
		\partial_y U (X^{\varepsilon}_t, Y^{\varepsilon}_t)
	\end{array} \right) + \left( \begin{array}{c}
	 \partial_y \Theta(X^{\varepsilon}_t, Y^{\varepsilon}_t)\\
	-\partial_x \Theta (X^{\varepsilon}_t, Y^{\varepsilon}_t)
	\end{array} \right) \right) d t + \varepsilon \left( \begin{array}{l}
		d W^{(1)}_t\\
		d W^{(2)}_t
	\end{array} \right),\\
	\left( \begin{array}{l}
		X^{\varepsilon}_0\\
		Y^{\varepsilon}_0
	\end{array} \right) &= \left( \begin{array}{l}
		x_0\\
		y_0
	\end{array} \right) .
\end{array} \right. \]
satisfies the condition of Corollary \ref{co:qgsystem}. Here we choose $(x_0, y_0)^T$ arbitrarily in $\mathbb{R}^2$. Figure \ref{fig:deterbernoulli} is the phase graph for the corresponding deterministic equation.
\begin{center}
	\tmfloat{h}{small}{figure}{\raisebox{0.0\height}{\includegraphics[width=5.2cm,height=4.0cm]{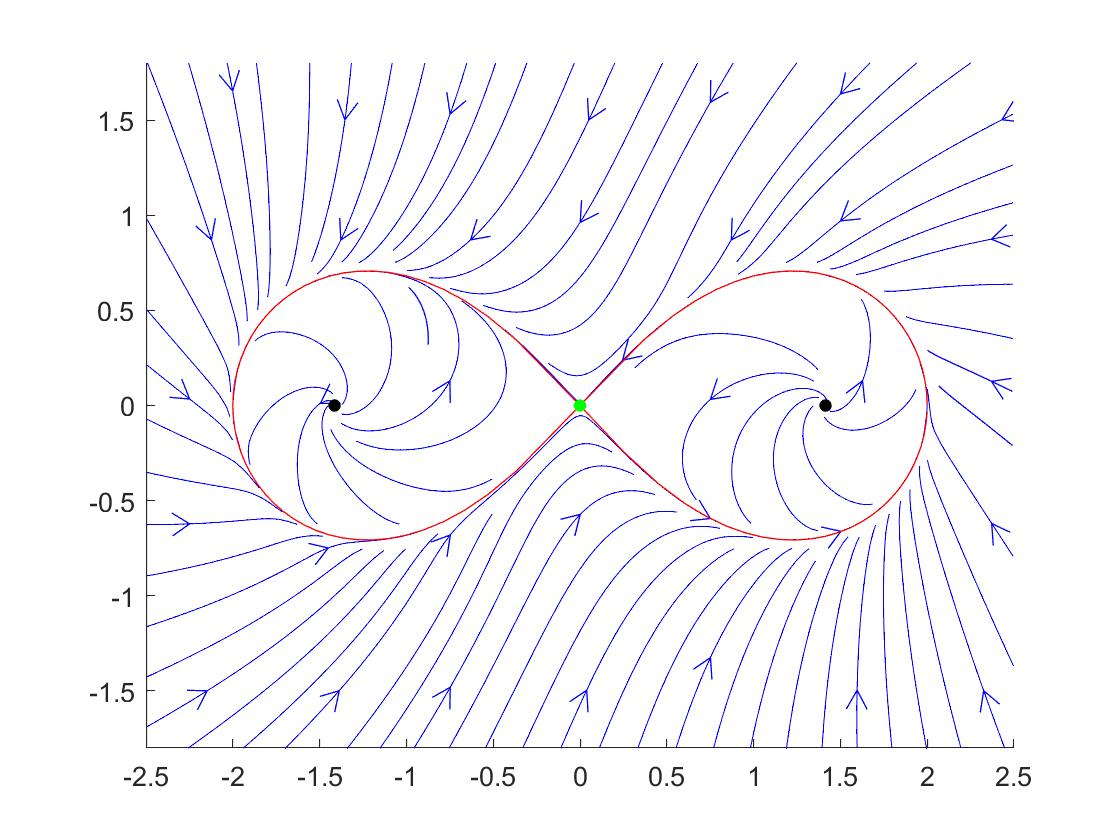}}}{\label{fig:deterbernoulli}}
	\tmfloat{h}{small}{figure}{\raisebox{0.0\height}{\includegraphics[width=5.2cm,height=4.0cm]{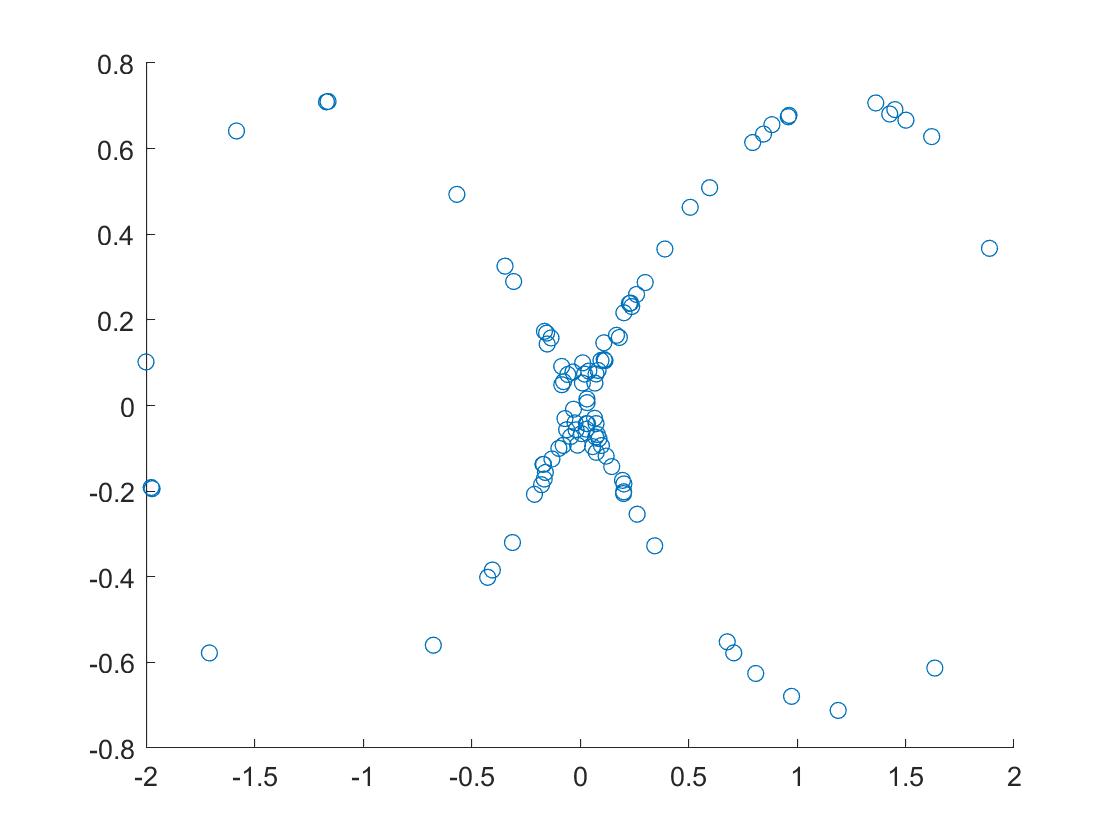}}}{\label{fig:stobernoulli}}
\end{center}

\bigskip

{\tmstrong{Theoretical analysis:}}  This system has three equivalent set $ K_1=\{(x,y)\in\mathbb{R}^2:(x^2+y^2)^2 = 4(x^2-y^2) \} $, $K_2=(-\sqrt2,0)$ and $K_3=(\sqrt2,0)$. According to Corollary \ref{rem:howtogetstable}, $K_1$ is stable. By Proposition \ref{prop:sstoos}, $K_2, K_3$ are unstable.  In Example \ref{Bernoulli}, $K_1$ is the only stable set, thus $\mu$ can only supports on $K_1$. Furthermore, according to Corollary \ref{co:cap} $\mu$ can only supports on $(0,0)$. 

The following numerical simulation supports the theoretical analysis. 

{\tmstrong{Numerical Simulation:}} 
Figure \ref{fig:stobernoulli} shows the distribution of the numerical solution with
$(x_0, y_0) = (0, 0.5)$, $\varepsilon = 0.0003$ and time $T = 30000$. In this
experiment, the numerical solution is obtained by choosing step size $h =
0.01$ with $100$ samples. The sample points are concentrated near $(0,0)$.

\bigskip

\begin{example}\label{duffing}
	Stochastic Duffing Equation:
\end{example}

Let us consider the Duffing equation with Brownian perturbations in 
$\mathbb{R}^2$:
\begin{equation}\label{stoduffing}
  \left\{ \begin{array}{cll}
    \left( \begin{array}{l}
      d X^{\varepsilon}_t\\
      d Y^{\varepsilon}_t
    \end{array} \right) &=& \left( - \left( \begin{array}{c}
      (X^{\varepsilon}_t)^3 - X^{\varepsilon}_t\\
      Y^{\varepsilon}_t
    \end{array} \right) + \left( \begin{array}{c}
      - Y^{\varepsilon}_t\\
      (X^{\varepsilon}_t)^3 - X^{\varepsilon}_t
    \end{array} \right) \right) d t + \varepsilon \left( \begin{array}{l}
      d W^{(1)}_t\\
      d W^{(2)}_t
    \end{array} \right),\\
    \left( \begin{array}{l}
      X^{\varepsilon}_0\\
      Y^{\varepsilon}_0
    \end{array} \right) &=& \left( \begin{array}{l}
      x_0\\
      y_0
    \end{array} \right) .
  \end{array} \right. 
\end{equation}
Here we choose $(x_0, y_0)^T$ arbitrarily in $\mathbb{R}^2$ and $J (x, y)
= \frac{x^4}{4} + \frac{y^2}{2} - \frac{x^2}{2} + 1$.  Figure \ref{fig:duffing13} is the phase graph of the Duffing equation. It can be checked that \eqref{stoduffing}  satisfying the requirements in Corollary \ref{co:qgsystem}.

{\tmstrong{Theoretical analysis:}} We denote $K_1 = (0, 0)$, $K_2 = (- 1, 0)$,  $K_3 = (1, 0)$. According to Corollary \ref{rem:howtogetstable}, $K_2, K_3$ are stable. By Proposition \ref{prop:sstoos}, $K_1$ is unstable.
According to Corollary \ref{co:qgsystem}, 
the weak limitation of the invariant measure of \eqref{stoduffing} does
not support on the saddle point $K_1$, which is unstable. Owing to the symmetrical property of \eqref{stoduffing}, we can prove $W(K_2)=W(K_3)$, which implies our method can not further exclude $K_2$ or $K_3$.

 To show $W (K_2) = W
(K_3)$, we need to consider the value of $V (K_2, K_1)$, $V (K_2, K_3)$, $V
(K_1, K_2)$, $V (K_1, K_3)$, $V (K_3, K_2)$ and $V (K_3, K_1)$. We claim that
$$V ((x_1, x_2), (y_1, y_2)) = V ((- x_1, -x_2), (- y_1, - y_2)), \forall (x_1, x_2), (y_1, y_2) \in \mathbb{R}^2.$$
It is because for any $(\varphi^{(1)},
\varphi^{(2)}) \in A C ([0, T] ; \mathbb{R}^2)$ satisfying $(\varphi_0^{(1)},
\varphi_0^{(2)}) = (x_1, x_2)$ and $(\varphi_T^{(1)}, \varphi_T^{(2)}) \\= (y_1, y_2)$, there
is a $(\phi^{(1)}, \phi^{(2)}) = (- \varphi^{(1)}, - \varphi^{(2)}) \in A C ([0, T] ;
\mathbb{R}^2)$ satisfying $(\phi_0^{(1)}, \phi_0^{(2)}) = (- x_1, - x_2)$, $(\phi_T^{(1)},
\phi_T^{(2)}) = (- y_1, - y_2)$ and
\begin{eqnarray*}
  S_{0 T} (\phi) & = & \frac{1}{2} \int^T_0 (\dot{\phi}_t^{(1)} - b^{(1)} (\phi_t))^2
  + (\dot{\phi}_t^{(2)} - b^{(2)} (\phi_t))^2 d t\\
  & = & \frac{1}{2} \int^T_0 (- \dot{\varphi}_t^{(1)} - b^{(1)} (- \varphi_t))^2 + (-
  \dot{\varphi}_t^{(2)} - b^{(2)} (- \varphi_t))^2 d t\\
  & = & \frac{1}{2} \int^T_0 (\dot{\varphi}_t^{(1)} - b^{(1)} (\varphi_t))^2 +
  (\dot{\varphi}_t^{(2)} - b^{(2)} (\varphi_t))^2 d t\\
  & = & S_{0 T} (\varphi) .
\end{eqnarray*}
Thus, we have $V (K_2, K_1) = V (K_3, K_1)$, $V (K_2, K_3) = V (K_3, K_2)$ and
$V (K_1, K_2) = V (K_1, K_3)$. From above facts we get $W (K_2) = W (K_3)$, which implies that $\mu$ may support both on $(- 1, 0)$ and $(1,
0)$.

The following results of numerical simulation supports our theoretical analysis above.

\begin{center}
\tmfloat{h}{small}{figure}{\raisebox{0.0\height}{\includegraphics[width=5.2cm,height=4.0cm]{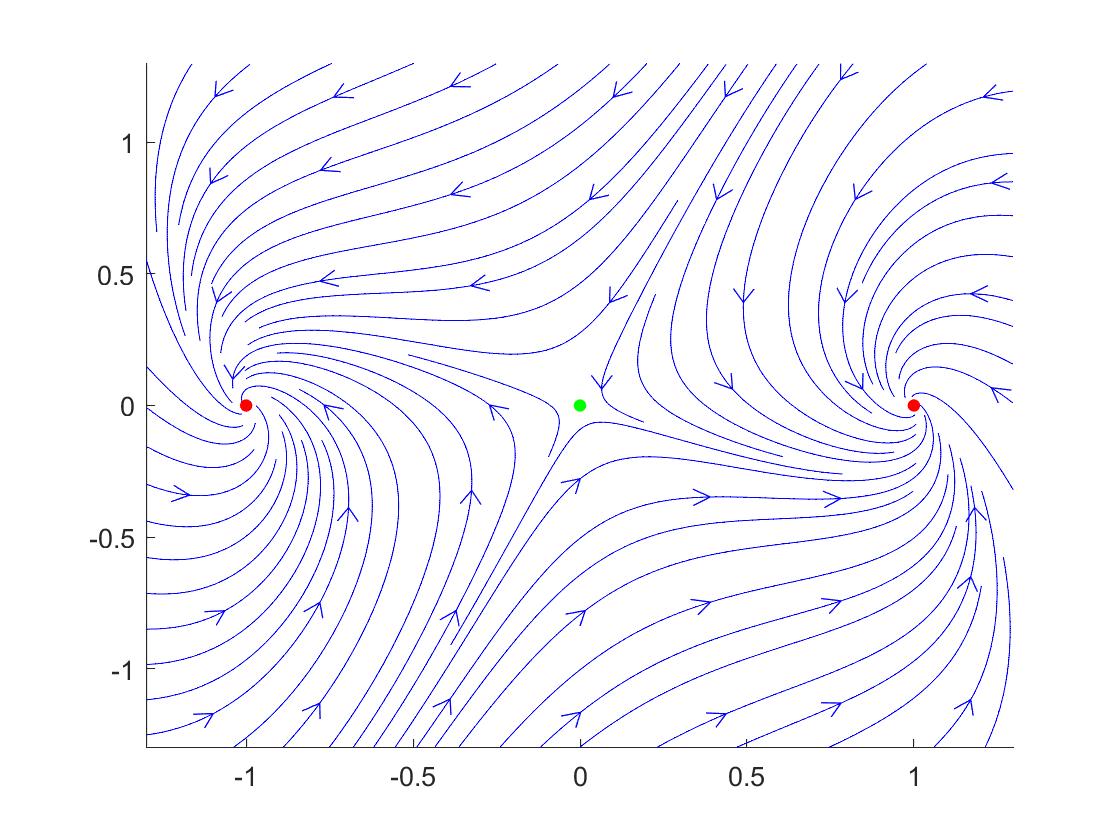}}}{\label{fig:duffing13}}
\tmfloat{h}{small}{figure}{\raisebox{0.0\height}{\includegraphics[width=5.2cm,height=4.0cm]{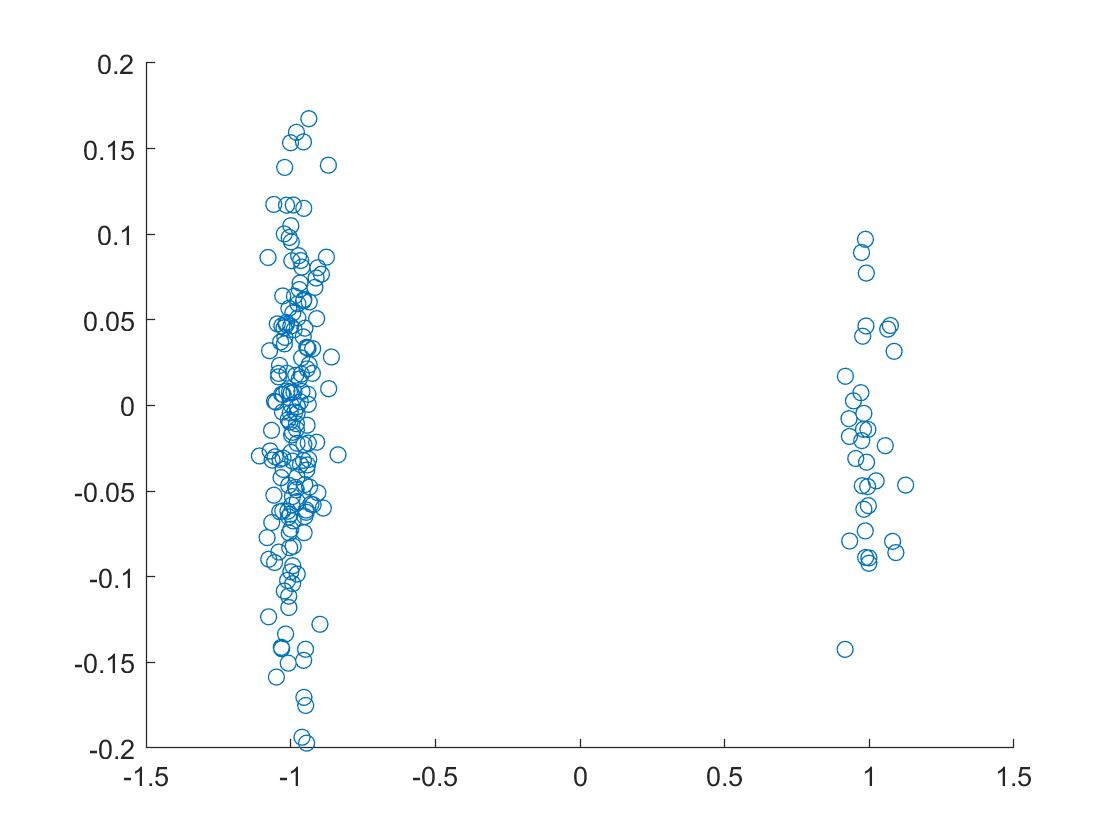}}}
{\label{fig:duffinglongsto}}
\end{center}

{\tmstrong{Numerical Simulation:}} Figure \ref{fig:duffinglongsto} shows the distribution of
numerical solution with $(x_0, y_0) = (- 0.05, 0.05)$, $\varepsilon = 0.01$
and time $T = 20000$. In this experiment, the numerical solution is obtained
by choosing step size $h = 0.01$ with $200$ samples. According to 
Figure \ref{fig:duffinglongsto},
we can see that although the initial point $(- 0.05,
0.05)$ is in the attracting domain of $(- 1, 0)$, the solution of \eqref{stoduffing} with small perturbations
beginning at $(- 0.05, 0.05)$ still concentrates on both $(- 1, 0)$ and $(1, 0)$. This fact is quite different from the deterministic case.

\begin{example}\label{Non-symmetrical Example}
	Non-symmetrical Equation:
\end{example}

Let us set $J (x, y) = x^6 + y^6 + 3 x^4 y^2 + 3 y^4 x^2 - 1.515 (x^4 + y^4 + 2 x^2
y^2) + 0.03 (x^2 + y^2) + 1$. We denote $\nabla J (x, y) = (J_1 (x, y),
J_2 (x, y))^T$ and $H (x, y) = (- J_2 (x, y), J_1 (x, y))^T$. It can be
checked the following SDE:
\[ \left\{ \begin{array}{cl}
	\left( \begin{array}{l}
		d X^{\varepsilon}_t\\
		d Y^{\varepsilon}_t
	\end{array} \right) &= \left( - \left( \begin{array}{c}
		J_1 (X^{\varepsilon}_t, Y^{\varepsilon}_t)\\
		J_2 (X^{\varepsilon}_t, Y^{\varepsilon}_t)
	\end{array} \right) + \left( \begin{array}{c}
		- J_2 (X^{\varepsilon}_t, Y^{\varepsilon}_t)\\
		J_1 (X^{\varepsilon}_t, Y^{\varepsilon}_t)
	\end{array} \right) \right) d t + \varepsilon \left( \begin{array}{l}
		d W^{(1)}_t\\
		d W^{(2)}_t
	\end{array} \right),\\
	\left( \begin{array}{l}
		X^{\varepsilon}_0\\
		Y^{\varepsilon}_0
	\end{array} \right) &= \left( \begin{array}{l}
		x_0\\
		y_0
	\end{array} \right) .
\end{array} \right. \]
satisfying the requirements in Corollary \ref{co:qgsystem}. Here we choose $(x_0, y_0)^T$ arbitrarily in $\mathbb{R}^2$. Figure \ref{fig:nonsym105} is the phase graph for the corresponding deterministic equation.

{\tmstrong{Theoretical analysis:}} In Example \ref{Non-symmetrical Example}, the non-symmetrical property is to say that the stable sets of this system are not symmetrical. As a comparison, Example \ref{duffing} is a symmetrical system. For this system, it has three equivalent sets $K_1=(0,0)$, $K_2=\{(x,y):x^2+y^2 = 0.01\}$ and $K_3=\{(x, y) : x^2 + y^2 = 1 \}$.
$K_2$ and $K_3$ are equivalent sets because of
$$\nabla J (x, y)=0, H (x, y)=0, \ \forall \ (x,y) \in K_2,K_3.$$
Precisely speaking, for any $(x_1,y_1), (x_2,y_2) \in K_3$, denote the length of arc between  $(x_1,y_1)$ and$(x_2,y_2)$ on $\{(x, y) : x^2 + y^2 = 1 \} $ as $s$. We can choose $\varphi \in C([0,\epsilon^{-1}s];K_3)$ connecting $(x_1,y_1)$ and $ (x_2,y_2)$ satisfying
\begin{equation*}
	\left\{ \begin{array}{l}
		 \begin{array}{l}
			 \dot{\varphi}^{(1)}_t=\\
			 \dot{\varphi}^{(2)}_t=
		\end{array}   \begin{array}{l}
			\epsilon{\varphi}^{(2)}_t,\\
			-\epsilon{\varphi}^{(1)}_t.
		\end{array}\\
	\end{array} \right. 
\end{equation*}
Thus, we have
\begin{eqnarray*}
	V((x_1,y_1), (x_2,y_2))& \leq &S_{0\epsilon^{-1}}(\varphi)\\
	& = & \frac{1}{2} \int^{\epsilon^{-1}s}_0 \epsilon^2 (({\varphi}^{(1)})^2+({\varphi}^{(2)})^2) d t\\
	& = & \frac{\epsilon s}{2}.
\end{eqnarray*}
By the arbitrariness of choosing $\varepsilon$, we have $V((x_1,y_1), (x_2,y_2))=0 $. Hence, $K_2, K_3$ are equivalent sets. According to Corollary \ref{rem:howtogetstable}, $K_1, K_3$ are stable. By Proposition \ref{prop:sstoos}, $K_2$ is unstable. Furthermore, we can check that $K_3$ is only one stable set getting the $\min W(K_i)$.

In fact, it is easy to check $\tilde{V}(K_2, K_1) = \tilde{V}(K_2, K_3)=0$ and $\tilde{V}(K_1,K_3)=\tilde{V}(K_3,K_1)=+\infty$. By Lemma \ref{lem:lowerboundedofqp}, we get $\tilde{V}(K_3, K_2) \geq 0.5285$. As for $\tilde{V}(K_1, K_2)$, we choose
\begin{equation*}
	\left\{
	\begin{array}{ll}
		\varphi_t^{(1)}& = t,\\
		\varphi_t^{(2)}& = 0,
	\end{array}  t \in \left[0,0.01\right] \right .
\end{equation*}
to connect $K_1$ with $K_2$. Setting $T=0.01$, it is not difficult to calculate $S_{0T}(\varphi) \approx 5.03 \times 10^{-3} $, which implies $ \tilde{V}(K_3, K_2) > S_{0T}(\varphi) \geq\tilde{V}(K_1, K_2) $. Thus, using the definition of $W(K_i)$, we have $W(K_3)<W(K_1)$, which implies $\mu$ only supports on $K_3=\{(x, y) : x^2 + y^2 = 1 \}$.

The following numerical simulation supports the theoretical analysis. 

{\tmstrong{Numerical Simulation:}} 
Figure \ref{fig:nonsymsys} shows the distribution of the numerical solution with
$(x_0, y_0) = (0, 0)$, $\varepsilon = 0.001$ and time $T = 10000$. In this
experiment, the numerical solution is obtained by choosing step size $h =
0.01$ with $100$ samples. It is interesting to notice that although $(0, 0)$
is a stable point of this system, in a long time observation, we may only
see the state $\{ (x, y) : x^2 + y^2 = 1 \}$.

\begin{center}
	\tmfloat{h}{small}{figure}{\raisebox{0.0\height}{\includegraphics[width=5.2cm,height=4.0cm]{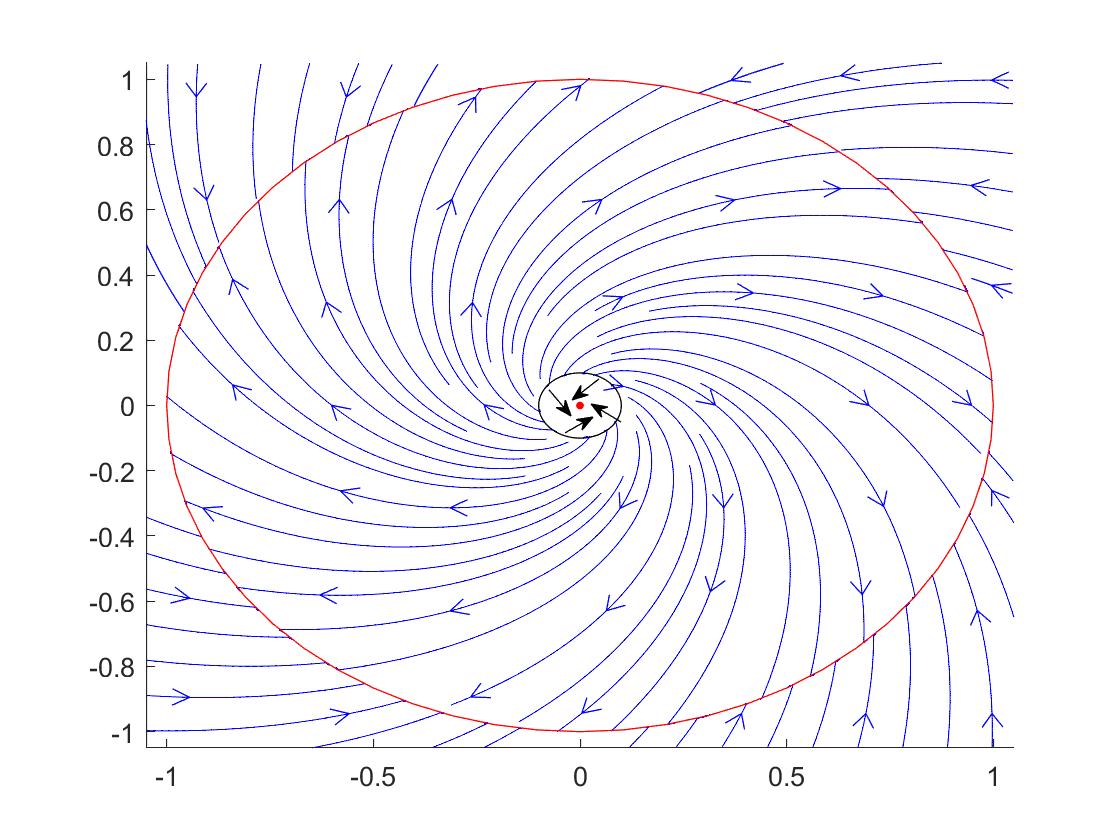}}}{\label{fig:nonsym105}}
	\tmfloat{h}{small}{figure}{\raisebox{0.0\height}{\includegraphics[width=5.2cm,height=4.0cm]{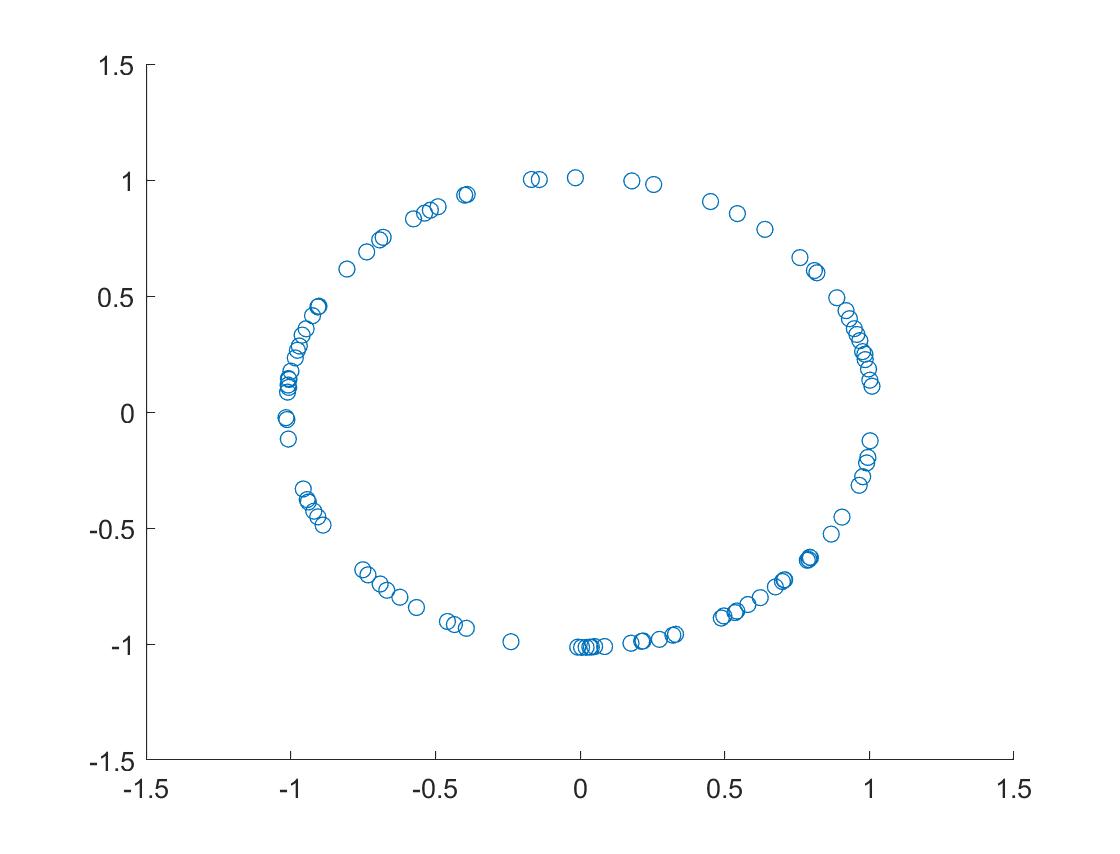}}}{\label{fig:nonsymsys}}
\end{center}

\section*{Acknowledgements}

The authors are grateful to the helpful discussion with
Jifa Jiang and Lifeng Chen. The authors would like to thank Derui Sheng
for the help in numerical simulation. Zhao Dong was partially supported by National Key R\&D Program of China (No. 2020YFA0712700), Key Laboratory of Random Complex Structures and Data Science, Academy of Mathematics and Systems Science, Chinese Academy of Sciences (No. 2008DP173182), NSFC No.11931004, NSFC No.12090014. Liang Li was partially supported by NSFC NO.11901026, NSFC NO. 12071433, NSFC NO.12171032.


\begin{thebibliography}{1}
\bibitem{peano}
\textsc{ Bafico, R. and  Baldi, P.}(1982).
Small random perturbations of Peano phenomena. \textit{Statistics} 6(1): 279--292.

\bibitem{gfw}
\textsc{Baldi, P. and  Caramellino, L.}(2011).
General freidlin--wentzell large deviations and positive diffusions.
\tmtextit{Statistics $\&$ Probability Letters}, 81(8): 1218--1229.

\bibitem{yb11}
\textsc{Bakhtin, Y.}(2011).
Noisy heteroclinic networks.
\tmtextit{Probab. Theory Related Fields}, 150(1-2): 1--42.

\bibitem{rl}\textsc{Bakhtin, Y. and Chen, H.-B.}(2021).
Atypical exit events near a repelling equilibrium.
\tmtextit{Ann. Probab.}, 49(3): 1257--1285.

\bibitem{rt}\textsc{Bakhtin, Y. and Chen, H.-B.}(2021).
Long exit times near a repelling equilibrium.
\tmtextit{Ann. Appl. Probab.}, 31(2): 594--624.


\bibitem{ex3}
\textsc{Brze{\'z}niak, Z. Cerrai, S. and Freidlin, M.}(2015).
Quasipotential and exit time for 2D stochastic NavierStokes equations driven by space time white noise.
\tmtextit{Probab. Theory Related Fields}, 162(3-4), 739–793.

\bibitem{li}
\textsc{Brze{\'z}niak, Z., Hausenblas, E. and Li, L.}(2019).
Quasipotential for the ferromagnetic wire governed by the 1D Landau-Lifshitz-Gilbert equations
\tmtextit{J. Differential Equations },267, 2284--2330.

\bibitem{chen}
\textsc{Chen, L.-F., Dong, Z., Jiang, J.-F. and Zhai, J.-L.}(2020).
On limiting behavior of stationary measures for stochastic evolution systems with small noise intensity.
\tmtextit{Sci. China Math.}, 63: 1463--1504.

\bibitem{ex1}
\textsc{Chen, Z.-W. and Freidlin, M.}(2005).
Smoluchowski-Kramers approximation and exit problems.
\tmtextit{Stoch. Dyn.}, 5 no.4: 569--585.

\bibitem{dyl}
\textsc{Dong, Y.}(2018).
Ergodicity of stochastic differential
equations driven by l{\'e}vy noise with local lipschitz coefficients.
\tmtextit{Advances in Mathematics}.


\bibitem{ex2}
\textsc{Gautier, E.}(2008).
Exit from a basin of attraction for stochastic weakly damped nonlinear Schrödinger equations.
\tmtextit{Ann. Probab.}, 36 no.3: 896–-930.


\bibitem{flan}
\textsc{Flandoli, F.}(2011).
Random Perturbation of PDEs and Fluid Dynamic Models.
Lecture Notes in Mathematics 2015.
Springer-Verlag Berlin Heidelberg.


\bibitem{dp}
\textsc{Zabczyk, J. and Da Prato, G.}(1996).
\tmtextit{Ergodicity for infinite dimensional systems}. London Mathematical Society
lecture note series 229. Cambridge University Press, 1  edition. 

\bibitem{1d}
\textsc{Hao, J.-H., Zhang, Y.-J., Chen, X.-F. and Caginalp, C.}(2013).
Effects of white noise in multistable dynamics.
\tmtextit{\begin{tabular}{ll}
		Discrete $\&$ Continuous Dynamical Systems - B
\end{tabular}}, 18(7): 1805--1825. 

\bibitem{me}
\textsc{Huang, W., Ji, M., Liu Z.-X., and Yi, Y.-F.}(2015).
Integral identity and measure estimates for stationary
Fokker--Planck equations.
\tmtextit{Ann. Probab.},
43(4): 1712--1730. 


\bibitem{ns}
\textsc{Hutzenthaler, M., Jentzen, A. and Kloeden, E. P.}(2012).
Strong convergence of an explicit numerical method for SDEs with nonglobally Lipschitz continuous coefficients.
\tmtextit{Ann. Appl. Probab.}, 22(4): 1611--1641. 

\bibitem{km}
\textsc{Khasminskii, R.}(2012).
\tmtextit{Stochastic Stability of Differential Equations}.
Stochastic Modelling and Applied Probability 66. Springer-Verlag Berlin Heidelberg, 2  edition. 

\bibitem{mane}
\textsc{Mane, R.}(1987).
Ergodic Theory and Differentiable Dynamics. Ergebnisse der Mathematik und ihrer Grenzgebiete 3. Folge Band 8. Springer-Verlag. 

\bibitem{sdpe17}
\textsc{Martirosyan, D.}(2017).
Large Deviations for Stationary Measures of Stochastic Nonlinear Wave Equations
with Smooth White Noise.
\tmtextit{Comm. Pure Appl. Math.}, 70(9): 1631-1831. 

\bibitem{sr}
\textsc{Sowers, R.}(1992).
Large deviations for the invariant measure of a reaction-diffusion equation with non-Gaussian perturbations.
\tmtextit{Probab. Theory Related Fields}, 92: 393--421. 

\bibitem{spderd}
\textsc{Sandra, C. and R\"ockner, M.}(2005).
Large deviations for invariant measures of stochastic reaction-diffusion systems with multiplicative noise and non-Lipschitz reaction term.
\tmtextit{Ann. Inst. H. Poincar\'e Probab. Statist.}, 41(2): 69--105. 

\bibitem{gtm113}
\textsc{Shreve, S. E. and Karatzas, I.}(1996).
\tmtextit{Brownian Motion and Stochastic Calculus, 2nd Edition}.
Graduate texts in mathematics volume 113. Springer, 2nd  edition. 

\bibitem{fw}
\textsc{Wentzell, A. and Freidlin, M.}(2012).
\tmtextit{Random Perturbations of Dynamical Systems}.
Grundlehren der mathematischen Wissenschaften 260.
Springer-Verlag Berlin Heidelberg, 3  edition. 
  
  
\end{thebibliography}
\end{document}